\newcommand{\R}{\mathbb{R}}
\newcommand{\ds}{\displaystyle}
\newcommand{\x}{{\bf x}}
\newcommand{\Div}{{\rm div}}
\newtheorem{Theorem}{Theorem}[section]
\newtheorem{Lemma}{Lemma}[section]
\newtheorem{Proposition}{Proposition}[section]
\newtheorem{Corollary}{Corollary}[section]
\newtheorem{remark}{Remark}[section]
\newtheorem*{Assumption*}{Assumption}
\newtheorem{problem}{Problem}[section]
\newtheorem*{problem*}{Problem}
\numberwithin{equation}{section}
\begin{document}

\title{A convergent numerical method to recover the initial condition of nonlinear parabolic equations from lateral Cauchy data\footnote{Dedicated to the 70th anniversary of a distinguished expert in the field of inverse problems Professor Michael V. Klibanov}}

\author{Thuy T. Le\thanks{%
Department of Mathematics and Statistics, University of North Carolina at
Charlotte, Charlotte, NC 28223, USA, \text{tle55@uncc.edu}.} \and Loc H. Nguyen 
\thanks{%
Department of Mathematics and Statistics, University of North Carolina at
Charlotte, Charlotte, NC 28223, USA, \text{loc.nguyen@uncc.edu} (corresponding author).}}

\date{}
\maketitle
\begin{abstract}
	We propose a new numerical method for the solution of the problem of the reconstruction of the initial condition of a quasilinear parabolic equation from the measurements of both Dirichlet and Neumann data on the boundary of a bounded domain. Although this problem is highly nonlinear, we do not require an initial guess of the true solution. The key in our method is the derivation of a boundary value problem for a system of coupled quasilinear elliptic equations whose solution is the vector function of the spatially dependent Fourier coefficients of the solution to the governing parabolic equation. We solve this problem by an iterative method. The global convergence of the system is rigorously established using a Carleman estimate. Numerical examples are presented.
\end{abstract}

\noindent {\it Keywords:} 
 Nonlinear equations, 
 initial condition, 
 convergent numerical method,
Carleman estimate, iteration

\noindent \textit{AMS Classification} 35R30, 35K20

\section{Introduction}

Let $d \geq 1$ be the spatial dimension and $T > 0$.
Let $q: \R \to \R$ and $c: \R^d \to \R$ be two smooth functions in the class $C^1$. 
Assume that $c(\x) \geq c_0$ for some $c_0 > 0.$
Consider the problem
\begin{equation}
	\left\{
		\begin{array}{rcll}
			c(\x)u_t(\x, t) &=& \Delta u(\x, t) + q(u(\x, t)) &\x \in \R^d, t \in (0, T)\\
			u(\x,0) &=& p(\x) & \x \in \R^d,
	\end{array}
	\right.
	\label{main eqn}
\end{equation}
where $p$ is a source function compactly supported in an open and bounded domain $\Omega$ of $\R^d$ with smooth boundary $\partial \Omega.$ 
We briefly discuss the unique solvability and some regularity properties of \eqref{main eqn}.
Assume that the initial condition of $p$ is in $H^{2 + \beta}(\R^d)$ for some $\beta \in [0, 1 + 4/d]$ and has compact support.
Assume further that 
\begin{equation}
	|q(s)| \leq C(1 + |s|) \quad \mbox{for all } s \in \R
	\label{Lady}
\end{equation} for some constant $C > 0$.
Then \eqref{main eqn} has a unique solution with $|u(\x, t)| \leq M$ and
 $u \in H^{2 + \beta, 1 + \beta/2} (\R^d \times [0, T])$ for some constant $M > 0$. 
 These unique solvability and regularity properties can be obtained by applying Theorem 6.1 in \cite[Chapter 5, \S 6]{LadyZhenskaya:ams1968} and Theorem 2.1 in \cite[Chapter 5, \S 2]{LadyZhenskaya:ams1968}.
 
We are interested in the following problem.
\begin{problem}[Inverse Source Problem]	
	Assume that there is a number $M > 0$ such that $|u(\x, t)| \leq M$ for all $\x \in \overline \Omega,$ $t \in [0, T]$. 	
	Given the lateral Cauchy data
	\begin{equation}
		f(\x, t) = u(\x, t) 
		\quad
		\mbox{and }
		\quad
		g(\x, t) = \partial_\nu u(\x, t)
		\label{data}
	\end{equation}
	for $\x \in \partial \Omega$, $t \in [0, T]$,
	determine the function $u(\x, 0) = p(\x), \x \in \Omega.$
	\label{ISP}
\end{problem}
	
	Problem \ref{ISP} arises from the problem of recovering the initial condition $p(\x)$ of parabolic equation \eqref{main eqn} from the lateral Cauchy data.
It has many real-world applications, e.g.,	 
determination of the spatially distributed temperature inside a solid from the boundary measurement of the heat and heat flux in the time domain \cite{Klibanov:ip2006}; 
identification the pollution on the surface of the rivers or lakes \cite{BadiaDuong:jiip2002};
 effective monitoring the heat conduction processes in steel industries, glass and polymer-forming and nuclear power station \cite{LiYamamotoZou:cpaa2009}.
When the nonlinear term $q(u)$ takes the form $u(1 - u)$ (or $q(u) = u(1 - |u|^{\alpha})$) for some $\alpha > 0$, the parabolic equation in (\ref{main eqn}) is called the high dimensional version of the well-known Fisher (or Fisher-Kolmogorov) equation \cite{Fisher:ae1937}.
Although the nonlinearity $q$ does not satisfy condition \eqref{Lady}, we do not experience any difficulty in numerical computations of the forward problem.
It is worth mentioning that the Fisher equation occurs in ecology, physiology, combustion, crystallization, plasma physics, and in general phase transition problems, see \cite{Fisher:ae1937}.
% Problem \ref{ISP} contributes greatly to those fields. 
%See the celebrated paper by Fisher \cite{Fisher:ae1937} for the derivation of the Fisher equation. 
Due to its realistic applications, the problem of determining the initial conditions of parabolic equations has been studied  intensively. However, up to the knowledge of the authors, numerical solutions are computed only in the case when the nonlinearity is absent, see e.g., \cite{LiNguyen:IPSE2019}. 
The uniqueness of Problem \ref{ISP} is well-known assuming that the nonlinearity $q$ is in class $C^1$, see \cite{Lavrentiev:AMS1986}. 
On the other hand, the logarithmic stability results were rigorously proved in \cite{Klibanov:ip2006, LiYamamotoZou:cpaa2009}. 
%Since the nonlinearities in our paper is allowed to grow faster than the ones imposed in \cite{Klibanov:ip2006, Lavrentiev:AMS1986, LiYamamotoZou:cpaa2009}, it is important to establish a uniqueness result.
For completeness, we briefly recall the logarithmic stability of Problem \ref{ISP} in this paper. 
The natural approach to solve this problem is the optimal control method; that means, minimizing some mismatch functionals.
However, since the known stability is logarithmic \cite{Klibanov:ip2006, LiYamamotoZou:cpaa2009}, the optimal control approach might not give good numerical results; especially, when the initial guess, if provided, is far away from the true solution. 
A more important reason for us to not use the optimal control method is that the cost functional is nonconvex and, therefore, might have multi-minima.
We draw the reader's attention to the convexification methods, see \cite{KlibanovIoussoupova:SMA1995, Klibanov:sjma1997, Klibanov:nw1997, KlibanovNik:ra2017, Klibanov:ip2015, KlibanovKolesov:cma2019, KlibanovLiZhang:ip2019, KhoaKlibanovLoc:arxiv2019}, which convexify the cost functional and therefore the difficulty about the lack of the initial guess is avoided. 
Applying the convexification method to numerically solve Problem \ref{ISP} will be studied in the near future project.
In this paper, rather than working on the convexification method, similarly to \cite{BAUDOUIN:SIAMNumAna:2017, Boulakia:preprint2019},  of which the authors have successfully solved a coefficient inverse problem for a hyperbolic equation and an inverse source problem for a parabolic equation by combining the contraction principle and a new Carleman estimate, we propose a numerical method for Problem \ref{ISP}.
The convergence of our method is proved based on the contraction principle using a new Carleman estimate. 
The latter is similar to the idea of \cite{BAUDOUIN:SIAMNumAna:2017 ,Baudouin:preprint2019, Boulakia:preprint2019}.

As mentioned, since a good initial guess of the true solution of Problem \ref{ISP} is not always available, the optimal control method, which is widely used in the scientific community, might not be applicable.
To overcome this difficulty, we propose to solve Problem \ref{ISP} in the Fourier domain. 
More precisely, we derive a system of elliptic PDEs whose solution consists of a finite number of the Fourier coefficients of the solution to the parabolic equation (\ref{main eqn}). 
The solution of this system directly yields the knowledge of the function $u(\x, t)$, from which the solution to our inverse problem follows. 
%Since the system of PDEs above is highly nonlinear, 
We numerically solve this nonlinear system by an iterative process. 
The initial solution can be computed by solving the system obtained by removing the nonlinear term.
Then, we approximate the nonlinear system by replacing the nonlinearity by the one acting on the initial solution obtained in the previous step. 
Solving this approximation system, we find an updated solution.
Continuing this process, we get a fast convergent sequence reaching to the desired function.
The convergence of this iterative procedure is rigorously proved by using a new Carleman estimate and the standard arguments of the contraction principle.
The fast convergence will be shown in both analytic and numerical senses.

Two papers closely related to the current one are \cite{Boulakia:preprint2019} and \cite{LiNguyen:IPSE2019}. In \cite{Boulakia:preprint2019}, a source term for a nonlinear parabolic equation is computed and  in \cite{LiNguyen:IPSE2019}, the second author and his collaborator computed the initial condition of the linear parabolic equation from the lateral Cauchy data.
On the other hand, the coefficient inverse problem for parabolic equations is also very interesting and studied intensively. 
We draw the reader's attention to \cite{Borceaetal:ip2014, CaoLesnic:nmpde2018, CaoLesnic:amm2019, KeungZou:ip1998, Nguyen:arxiv2019, Nguyens:jiip2019, YangYuDeng:amm2008} for important numerical methods and good numerical results.
Besides, the  problem of recovering the initial conditions for the hyperbolic equation is very interesting since it arises in many real-world applications. 
For instance, the problems thermo- and photo-acoustic tomography play the key roles in biomedical imaging. 
We refer the reader to some important works in this field \cite{LiuUhlmann:ip2015, KatsnelsonNguyen:aml2018, HaltmeierNguyen:SIAMJIS2017}. 
Applying the Fourier transform, one can reduce the problem of reconstructing the initial conditions for hyperbolic equations to some inverse source problems for the Helmholtz equation, see \cite{NguyenLiKlibanov:IPI2019, WangGuoZhangLiu:ip2017, WangGuoLiLiu:ip2017, LiLiuSun:IPI2018, ZhangGuoLiu:ip2018} for some recent results. 

The paper is organized as follows.
In Section \ref{sec method}, we derive a nonlinear system of elliptic PDEs, which leads to a numerical method to solve Problem \ref{ISP}. 
%In this section, we also briefly discuss the uniqueness of this inverse problem for the completeness' sake.
This nonlinear system is solved by an iterative scheme. The proof of the convergence of this iteration is based on the contraction principle.
%Especially, in Section \ref{sec solve nonlinear system}, 
In Section \ref{Sec Carleman}, we establish and prove a Carleman estimate. This estimate plays an important role in the proof Theorem \ref{minimizer} that guarantees the existence and uniqueness of the least-squares solution to over-determined elliptic systems.
In Section \ref{sec convergence}, we prove the convergence of the iterative sequence.
In Section \ref{sec num}, we discuss the implementation of our method and show several numerical results.
Section \ref{sec remarks} is for concluding remarks.

\section{A numerical method to solve Problem \ref{ISP} }	 \label{sec method}

The main aims of this section are to derive a system of nonlinear elliptic equations, whose solutions directly yield the solutions to Problem \ref{ISP}, and then propose a method to solve it.

\subsection{A system of nonlinear elliptic equations} \label{sec 2.1}

Let $\{\Psi_n\}_{n \geq 1}$ be an orthonormal basis of $L^2(0, T).$
For each point $\x \in \Omega$, we can approximate $u(\x, t)$, $t \in [0, T],$ as
\begin{equation}
	u(\x, t) = \sum_{n = 1}^\infty u_n(\x) \Psi_n(t) \simeq \sum_{n = 1}^N u_n(\x) \Psi_n(t)
	\label{Fourier u}
\end{equation} 
where
\begin{equation}
	u_n(\x) = \int_0^T u(\x, t) \Psi_n(t) dt \quad n \geq 1.
	\label{Fourier coefficient}
\end{equation}
%The choice of $\{\Psi_n\}_{n \geq 1}$ and the ``cut-off" number $N$ will be determined later.
%More precisely, in Section \ref{sec num}, we verify that with appropriate values of $N$, the error causing from \eqref{Fourier u} is small, see also Figure \ref{fig choose N}.
\begin{remark}
Replacing $\simeq$ in \eqref{Fourier u}  by $``="$   forms our approximate mathematical model. 
We cannot prove the convergence of the model as $N \to \infty$. Indeed, such a result is very hard to prove due to both the nonlinearity and the ill-posedness of our inverse problem. 
Therefore, our goal below is to find spatially dependent Fourier coefficients $u_n$ defined in \eqref{Fourier coefficient}. The number $N$ should be chosen numerically. 
In fact, in Section \ref{sec num}, we verify that with appropriate values of $N$, the error causing from \eqref{Fourier u} is small, see also Figure \ref{fig choose N}
\end{remark}
Due to (\ref{Fourier u}), the function $u_t(\x, t)$ is approximated by
\begin{equation}
	u_t(\x, t) \simeq \sum_{n = 1}^N u_n(\x) \Psi_n'(t)
	\quad \x \in \Omega, t \in [0, T].
	\label{Fourier ut}
\end{equation}
From now on, we replace the approximation ``$\simeq$" by equality. 
This obstacle will be considered numerically in Remark \ref{rem N} and Figure \ref{fig choose N}.
Plugging (\ref{Fourier u}) and (\ref{Fourier ut}) into the governing equation in (\ref{main eqn}), we obtain \begin{equation}
	c(\x)\sum_{n = 1}^N u_n(\x) \Psi_n'(t) = 
	\sum_{n = 1}^N \Delta u_n(\x) \Psi_n(t) 
	+ q\Big(\sum_{n = 1}^N u_n(\x) \Psi_n(t)\Big) \label{3.3}
\end{equation}
for all $\x \in \Omega.$
For each $m = 1, \dots, N$, multiply $\Psi_m(t)$ to both sides of (\ref{3.3}) and then integrate the resulting equation with respect to $t$ on $[0, T]$. 
For all $\x \in \Omega,$ we have
\begin{multline}
%	\hspace{-1cm}
	c(\x)\sum_{n = 1}^N u_n(\x) \int_0^T\Psi_n'(t) \Psi_m(t)dt 
	%\nonumber
	\\
	 = 
	\sum_{n = 1}^N \Delta u_n(\x) \int_0^T\Psi_n(t) \Psi_m(t)dt 
	+ \int_0^Tq\Big(\sum_{n = 1}^N u_n(\x) \Psi_n(t)\Big)\Psi_m(t)dt.
	\label{3.4}
\end{multline}
The system (\ref{3.4}) with $m = 1, \dots, N$ can be rewritten as
\begin{equation}
	c(\x) \sum_{n = 1}^N s_{mn} u_n(\x) = \Delta u_m(\x) + q_m(u_1(\x), u_2(\x), \dots, u_N(\x))
	\label{system U}
\end{equation}
where
\[	s_{mn} = \int_0^T \Psi'_n(t) \Psi_m(t)dt 
\]
and
\begin{equation}
	q_m(u_1(\x), u_2(\x), \dots, u_N(\x)) = \int_{0}^T q\Big(\sum_{n = 1}^N u_n(\x) \Psi_n(t)\Big) \Psi_m(t)dt.
	\label{eqnqm}
\end{equation}
%which is equivalent to
%\begin{equation}
%	c(\x) S U(\x) = \Delta U(\x) + Q(U(\x))
%	\label{system U}
%\end{equation}
%where $S$ is the $N \times N$ matrix with the $mn^{\rm th}$ entry 
%\[
%	 s_{mn} = \int_0^T \Psi_n'(t)\Psi_m(t) dt
%\]
%and 
%\[
%	Q(U(\x)) = (q_m(U))_{m = 1}^N = \left(\int_0^Tq\Big(\sum_{n = 1}^N u_n(\x) \Psi_n(t)\Big)\Psi_m(t)dt\right)_{m = 1}^N.
%\]
Due to (\ref{Fourier coefficient}), each function $u_m$, $m = 1, \dots, N$, satisfies the Cauchy boundary conditions
%\begin{equation}
%	u_m(\x) = f_m(\x) 
%	= \int_0^T f(\x, t) \Psi_m(t)dt 
%	\quad \mbox{and}
%	\quad
%	\partial_\nu u_m(\x) = g_m(\x)
%	=\int_0^T g(\x, t) \Psi_m(t)dt
%	\label{boundary conditions}
%\end{equation}
\begin{equation}
\left\{
	\begin{array}{rl}
		u_m(\x) &= f_m(\x) 
	= \ds\int_0^T f(\x, t) \Psi_m(t)dt 
\\
	\partial_\nu u_m(\x) &= g_m(\x)
	=\ds\int_0^T g(\x, t) \Psi_m(t)dt
	\end{array}
\right.
	\label{boundary conditions}
\end{equation}
for all $\x \in \partial \Omega$, $m = 1, \dots, N.$ Here, $f(\x, t)$ and $g(\x, t)$ are the given data.

\begin{remark}
Problem \ref{ISP} becomes the problem of finding all functions $u_m(\x)$, $\x \in \Omega$, $m = 1, \dots, N$, satisfying (\ref{system U}) and the Cauchy boundary conditions (\ref{boundary conditions}).
In fact, if all of those functions are known, we can compute the function $u(\x, t)$, $\x \in \Omega$, $t \in [0, T]$ via (\ref{Fourier u}). 
Then, the initial condition $p(\x)$ is given by the function $u(\x, 0).$
\end{remark}

\begin{remark}
	From now on, we consider the values of $f_m(\x)$ and $g_m(\x)$ on $\partial \Omega$, $m = 1, \dots, N$, as the ``indirect data", see (\ref{boundary conditions}). 
	Denote by $f_m^*(\x)$ and $g_m^*(\x)$ the noiseless data. In numerical study, we set the noisy data as
	\[
		f_m^{\delta} = f_m^*(1 + \delta(-1 + 2{\rm rand})) \quad 
		g_m^{\delta} = g_m^*(1 + \delta(-1 + 2{\rm rand}))
	\] on $\partial \Omega,$ $1 \leq m \leq N$ where $\delta > 0$ is the noise level and ${\rm rand}$ is the function taking uniformly distributed random numbers  in the range $[0, 1]$.
	In our numerical study, $\delta = 20\%.$
\end{remark}

\subsection{An iterative procedure to solve the system \eqref{system U}-- \eqref{boundary conditions}} \label{sec solve nonlinear system}

We propose a procedure to compute $u_1(\x)$, $ \dots,$ $u_N(\x)$.
We first approximate (\ref{system U})--(\ref{boundary conditions}) by solving the following over-determined problem
\begin{equation}
	\left\{
		\begin{array}{rcll}
			 c(\x) \sum_{n = 1}^N s_{mn} u_n^{(0)}(\x) &=& \Delta u_m^{(0)}(\x) 
 &\x \in \Omega,\\
			u_m^{(0)}(\x) &=& f_m(\x) &\x \in \partial \Omega,\\
			\partial_{\nu} u_m^{0}(\x) &=& g_m(\x) &\x \in \partial \Omega
		\end{array}
	\right.
	\quad m = 1, 2, \dots, N
	\label{U0}
\end{equation}
for a vector value function $(u^{(0)}_1, \dots, u^{(0)}_N)$. 
Then, assume by induction that we know $(u^{(k-1)}_1, \dots, u^{(k-1)}_N)$, $k \geq 1$, we find $(u^{(k)}_1, \dots, u^{(k)}_N)$ by solving
\begin{equation}
	\left\{
		\begin{array}{ll}
			c(\x) \sum_{n = 1}^N s_{mn} u^{(k)}_n(\x) = \Delta u^{(k)}_m(\x) 
			\\
			\hspace{3cm}+ q_m[P (u^{(k-1)}_1(\x)), \dots, P(u_N^{(k-1)}(\x))]
 &\x \in \Omega,\\
			u_m^{(k)}(\x) = f_m(\x) &\x \in \partial \Omega,\\
			\partial_{\nu} u_m^{(k)}(\x) = g_m(\x) &\x \in \partial \Omega
		\end{array}
	\right.
	\label{Up}
\end{equation}
where $q_m$ is defined in (\ref{eqnqm})
for $m = 1, 2, \dots, N.$ 
Here, 
\begin{equation}
	P(s) = \left\{
		\begin{array}{ll}
			M\sqrt{T} & s \in (M\sqrt{T}, \infty),\\
			s &   s \in [-M\sqrt{T}, M\sqrt{T}],\\
			-M\sqrt{T}& s \in (-\infty, -M\sqrt{T}]
		\end{array}
	\right.
	\quad \mbox{for all } s \in \R.
	\label{cutoff function}
\end{equation}
serves as a cut-off function.
where $M > \|u^*\|_{L^{\infty}(\Omega \times [0, T])}$ is a fixed constant.

%The implementation of this procedure, using the finite difference method, is described in Algorithm \ref{alg}.

%\begin{remark}
	In practice since both Dirichlet and Neumann conditions imposed, problem (\ref{U0}) and  problem (\ref{Up}) might have no solution. 
	However, since these two problems are linear, we can use the linear least-squares method to find the ``best fit" solutions. 
	In order to guarantee the convergence of the method, we include a Carleman weight function in the linear least-squares functional.
Define the set of admissible solution
	\[
		H = \{(u_m)_{m = 1}^N \in H^2(\Omega)^N: u_m|_{\partial \Omega} = f_m \mbox{ and } \partial_{\nu} u_m|_{\partial \Omega} = g_m, 1 \leq m \leq N\}.
	\] 	
	Throughout the paper, we assume that the set $H$ is nonempty.	
	In the analysis, we will need the following subspace of $H^2(\Omega)^N$ 
		\begin{equation}
			H_0 = \left\{
				(v_1, \dots, v_N) \in H^2(\Omega): 
				v_m(\x) = \partial_{\nu} v_m(\x) = 0
			\right\}.
		\label{H0}
		\end{equation}
Let $\x_0$ be a point in $\R^d \setminus \Omega$ with $\min\{r(\x): \x \in \overline \Omega\} > 1$ and $b > \max\{r(\x): \x \in \overline \Omega\}$ where
\[
	r(\x) = |\x - \x_0| \quad \mbox{for all } \x \in \R^d.
%	\label{rx}
\]
We choose $\x_0$ such that $\min\{r(\x): \x \in \overline \Omega\} > 1.$
To find $u^{(0)}$, we minimize the functional 
$
	J^{(0)}: H \to \R
$ with
\begin{equation}
	J^{(0)}(u_1, \dots, u_N) 
	= 
	\sum_{m = 1}^N\int_{\Omega} e^{2\lambda b^{-\beta} r^\beta(\x)} \Big|\Delta u_m - c(\x) \sum_{n = 1}^N s_{mn} u_n\Big|^2 d\x 
	\label{J0}
\end{equation}
where $\lambda$ and $\beta$ are the numbers as in Corollary \ref{Col 3.1}.
The obtained minimizer $(u_m^{(0)})_{m = 1}^N$ $\in$ $H$ is called the  regularized solution to (\ref{U0}).
Next, assume, by induction, that we know $(u_m^{(k-1)})_{m = 1}^N$, $k \geq 1$, we set $(u_m^{(k)})_{m = 1}^N$ as the minimizer of 
$
	J^{(k)}: H \to \R
$ defined as
\begin{multline}
	J^{(k)}(u_1, \dots, u_N) = \sum_{m = 1}^N\int_{\Omega} e^{2\lambda b^{-\beta} r^\beta(\x)}\Big|\Delta u_m - c(\x)\sum_{n = 1}^N s_{mn} u_n
	\\
 + q_m(P(u_1^{(k - 1)}), \dots, P(u_N^{(k-1)}))\Big|^2 d\x.
 \label{Jk}
\end{multline}

\begin{remark}
	The function $e^{2\lambda b^{-\beta} r^\beta(\x)}$ in \eqref{J0} and \eqref{Jk} is called the Carleman weight function. 
	Its presence is very helpful to prove the existence and uniqueness of the minimizers for the functionals $J^{(k)}$, $k \geq 0,$ see Theorem \ref{minimizer}. 
	On the other hand, this Carleman weight function and the Carleman estimate (see Theorem \ref{Carleman}) play important roles for us to prove the convergence of our method, see Theorem  \ref{main thm}.
\end{remark}

The following result guarantees the existence and uniqueness of the minimizer of (\ref{U0}) and the one of (\ref{Up}), $k \geq 1$.
\begin{Theorem}
	Assume that $f_m$ and $g_m$ are in $L^2(\partial \Omega)$, $m = 1, 2, \dots, N$ and assume that $H$ is nonempty. 	
	Then, each functional $J^{(k)}$, $k \geq 0$, has a unique minimizer provided that both $\lambda$ and $\beta$ are sufficiently large. 
	\label{minimizer}
\end{Theorem}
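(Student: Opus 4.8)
The plan is to view each functional $J^{(k)}$ as a quadratic functional on the affine space $H$ and to use the Carleman weight together with the Carleman estimate (Theorem \ref{Carleman}, Corollary \ref{Col 3.1}) to establish strict convexity, which together with coercivity and lower semicontinuity yields existence and uniqueness. First I would reduce to a fixed data term: for $k \geq 1$ the term $q_m(P(u_1^{(k-1)}), \dots, P(u_N^{(k-1)}))$ depends only on the previously computed (known) vector function, so it is a fixed $L^\infty(\Omega)$ function by the boundedness of $P$; hence $J^{(k)}$ has exactly the same structure as $J^{(0)}$ up to an inhomogeneous term and it suffices to treat a generic quadratic functional
\begin{equation}
	J(u_1, \dots, u_N) = \sum_{m=1}^N \int_\Omega e^{2\lambda b^{-\beta} r^\beta(\x)} \Big| \Delta u_m - c(\x) \sum_{n=1}^N s_{mn} u_n + G_m(\x) \Big|^2 d\x
	\label{eq:genericJ}
\end{equation}
with $G_m \in L^2(\Omega)$ fixed.

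Next I would pass to the homogeneous subspace. Write $u_m = w_m + z_m$ where $(w_m)$ is any fixed element of $H$ (nonempty by hypothesis) and $(z_m) \in H_0$; then $J$ becomes a quadratic functional of $(z_m) \in H_0$, namely $J = A(z,z) - 2\ell(z) + \text{const}$, where the bilinear form is
\begin{equation}
	A(z, \tilde z) = \sum_{m=1}^N \int_\Omega e^{2\lambda b^{-\beta} r^\beta} \Big( \Delta z_m - c \sum_n s_{mn} z_n \Big) \Big( \Delta \tilde z_m - c \sum_n s_{mn} \tilde z_n \Big) d\x
	\label{eq:bilform}
\end{equation}
and $\ell$ is a bounded linear functional on $H_0$ (boundedness being immediate since the Carleman weight is bounded above and below on $\overline\Omega$). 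The crux is to show that $A$ is bounded and coercive on $H_0$ with respect to the $H^2(\Omega)^N$ norm. Boundedness is routine from Cauchy--Schwarz and the $L^\infty$ bounds on the weight and on $c$. For coercivity I would invoke the Carleman estimate: applied to each $z_m$ with zero Cauchy data, Theorem \ref{Carleman}/Corollary \ref{Col 3.1} gives, for $\lambda, \beta$ large, an estimate of the form $\int_\Omega e^{2\lambda b^{-\beta} r^\beta}\big(\lambda |\nabla z_m|^2 + \lambda^3 |z_m|^2\big)\, d\x \leq C \int_\Omega e^{2\lambda b^{-\beta} r^\beta} |\Delta z_m|^2\, d\x$; to also control $\int |\Delta z_m|^2$ (and hence the full $H^2$ norm via elliptic regularity and the zero boundary data) I would move the coupling term $c \sum_n s_{mn} z_n$ to the right: since $\big|\Delta z_m\big|^2 \leq 2 \big|\Delta z_m - c\sum_n s_{mn} z_n\big|^2 + 2\big|c \sum_n s_{mn} z_n\big|^2$, summing over $m$, integrating against the weight, and absorbing the $\sum_m |c\sum_n s_{mn} z_n|^2 \leq C \sum_n |z_n|^2$ term into the left-hand side of the Carleman estimate (this is where $\lambda$ large — so that the $\lambda^3 |z|^2$ term dominates the fixed constant $C$ — is essential) yields $\sum_m \int_\Omega e^{2\lambda b^{-\beta} r^\beta} |\Delta z_m|^2 \leq C A(z,z)$ plus a controlled multiple of $\sum_m \int e^{\cdots}|z_m|^2$, which is again absorbed. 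Combined with $\|z_m\|_{H^2(\Omega)} \leq C\|\Delta z_m\|_{L^2(\Omega)}$ for $z_m \in H_0$ and the two-sided bounds on the weight, this gives $A(z,z) \geq c_* \|z\|_{H^2(\Omega)^N}^2$.

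With $A$ bounded, symmetric, and coercive on the Hilbert space $H_0$ and $\ell$ bounded, the Lax--Milgram theorem (or equivalently, direct minimization: $J$ is strictly convex, weakly lower semicontinuous, and coercive, so a minimizer exists and is unique) produces a unique $(z_m) \in H_0$ minimizing $J$, hence a unique minimizer $(u_m^{(k)}) = (w_m + z_m) \in H$ of $J^{(k)}$. The main obstacle is the coercivity step: the Carleman estimate as stated controls only lower-order terms by $\int e^{\cdots}|\Delta z_m|^2$, so one must carefully recover control of $\int e^{\cdots}|\Delta z_m|^2$ itself by the triangle inequality on the coupled operator and then absorb the coupling back using the gain in powers of $\lambda$; getting the constants to line up (choosing $\beta$ large enough for the Carleman estimate to hold and then $\lambda$ large enough to beat the coupling constant $\max_{m,n}|s_{mn}| \cdot \|c\|_{L^\infty}$) is the delicate bookkeeping. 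Everything else — measurability, the reduction to $H_0$, boundedness of $\ell$, passage from $H^2$ coercivity back to the original variables — is standard.
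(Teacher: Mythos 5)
Your proposal is correct and follows essentially the same route as the paper: reduce to the homogeneous space $H_0$ via a fixed element of $H$, treat the nonlinear term at step $k$ as a known inhomogeneity, prove coercivity of the resulting bilinear form by combining the splitting inequality $|\Delta z_m - c\sum_n s_{mn}z_n|^2 \geq \tfrac12|\Delta z_m|^2 - |c\sum_n s_{mn}z_n|^2$ with the Carleman estimate of Corollary \ref{Col 3.1} and a large $\lambda$ to absorb the coupling, then conclude by Lax--Milgram. The only cosmetic difference is that you recover the full $H^2$ control from $\|\Delta z_m\|_{L^2}$ via elliptic regularity for the Dirichlet problem, whereas the paper reads it off directly from the second-derivative terms built into its new Carleman estimate; both are fine.
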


\begin{proof}
	We only prove Theorem \ref{minimizer} when $k \geq 1$.
	Since $H$ is nonempty, we can find a vector valued function $(\varphi_m)_{m = 1}^N \in H.$ 
	Define 
	\begin{equation}
		v_m(\x) = u_m(\x) - \varphi_m(\x) \quad \x \in \Omega, m = 1, \dots, N.
		\label{3939}
	\end{equation}
	We minimize
	\[
		I^{(k)}(v_1, \dots, v_N) = J^{(k)}(u_1 - \varphi_1, \dots, u_N - \varphi_N) 
	\] where $(v_m)_{m = 1}^N$ varies in $H_0$, defined in (\ref{H0}).
	If $(v_m)_{m = 1}^N$ minimizes $I^{(k)},$ then by the variational principle,
\begin{eqnarray}
	\hspace{-2cm}
	\sum_{m = 1}^N\Big \langle 
	e^{2\lambda b^{-\beta} r^\beta(\x)} \Big(\Delta v_m - c(\x)\sum_{n = 1}^N s_{mn} v_n
	+ \Delta \varphi_m - c(\x)\sum_{n = 1}^N s_{mn} \varphi_n
	\nonumber
	\\
%	\hspace{1cm} 
	+ q_m(P(u_1^{(k - 1)}), \dots, P(u_N^{(k-1)}))\Big),	
	\Delta h_m - c(\x)\sum_{n = 1}^N s_{mn} h_n \Big \rangle_{L^2(\Omega)} = 0
	\label{39}
\end{eqnarray}
for all $(h_m)_{m = 1}^N \in H_0$.
The identity (\ref{39}) is equivalent to
\begin{multline}
%	\hspace{-2cm}
	\sum_{m = 1}^N\Big \langle 
	e^{2\lambda b^{-\beta} r^\beta(\x)} \Delta v_m - c(\x)\sum_{n = 1}^N s_{mn} v_n,
	 \Delta h_m - c(\x)\sum_{n = 1}^N s_{mn} h_n \Big \rangle_{L^2(\Omega)}
%	\nonumber
	\\
%	\hspace{-1cm}
=-\sum_{m = 1}^N\Big \langle
	 e^{2\lambda b^{-\beta} r^\beta(\x)} \Big( \Delta \varphi_m - c(\x)\sum_{n = 1}^N s_{mn} \varphi_n + q_m(P(u_1^{(k - 1)}), \dots, P(u_N^{(k-1)}))\Big),
%	 \nonumber
	 \\
%	 \hspace{5cm}	
	\Delta h_m - c(\x)\sum_{n = 1}^N s_{mn} h_n \Big \rangle_{L^2(\Omega)}. 
	\label{40}
\end{multline}
The left hand side of (\ref{40}) defines a bilinear form  $\{\cdot, \cdot\}$ of a pair $((v_m)_{m = 1}^N, (h_m)_{m = 1}^N)$ in $H_0$.

	We claim that $\{\cdot, \cdot\}$ is coercive; that means,	 \[
		\{(v_m)_{m = 1}^N, (v_m)_{m = 1}^N\} \geq C\|(v_m)_{m = 1}^N\|_{H^2(\Omega)^N}^2
	\] for some constant $C$.
	In fact, using the inequality $(x - y )^2 \geq x^2/2 - y^2 $, we have
\begin{multline*}
%	\hspace{-1cm}	
	\sum_{m = 1}^N\int_{\Omega} e^{2\lambda b^{-\beta} r^\beta(\x)}\Big|\Delta v_m - c(\x)\sum_{n = 1}^N s_{mn} v_n\Big|^2 d\x
	\geq \sum_{m = 1}^N\int_{\Omega} e^{2\lambda b^{-\beta} r^\beta(\x)}|\Delta v_m|^2 d\x	
	\\
	-\sum_{m = 1}^N\int_{\Omega} e^{2\lambda b^{-\beta} r^\beta(\x)}\Big| c(\x)\sum_{n = 1}^N s_{mn} v_n\Big|^2 d\x.
\end{multline*}
Applying the Carleman estimate (\ref{33}), which will be proved in Section \ref{Sec Carleman}, for the function $v_m$ for each $m \in \{1, \dots, N\},$ we have
\begin{multline*}
	%\hspace{-2cm}	
	\sum_{m = 1}^N\int_{\Omega} e^{2\lambda b^{-\beta} r^\beta(\x)}\Big|\Delta v_m - c(\x)\sum_{n = 1}^N s_{mn} v_n\Big|^2 d\x
	\\
	%\hspace{-1cm}	
	\geq \sum_{m = 1}^N\int_{\Omega} e^{2\lambda b^{-\beta} r^\beta(\x)}\Big[\frac{C\sum_{i, j = 1}^d|\partial^2_{x_i x_j} v_m|^2}{\lambda} + 
	C\lambda |\nabla v_m|^2 + C\lambda^3|v_m|^2
	\Big]d\x
	\\	
	%\hspace{6cm}
	-\sum_{m = 1}^N\int_{\Omega} e^{2\lambda b^{-\beta} r^\beta(\x)}\Big| c(\x)\sum_{n = 1}^N s_{mn} v_n\Big|^2 d\x.
\end{multline*}
Since $c(\x)$ and $s_{mn}$ are finite, we can choose $\lambda$ sufficiently large such that
\begin{equation*}
	%\hspace{-1cm}	
	\sum_{m = 1}^N\int_{\Omega} e^{2\lambda b^{-\beta} r^\beta(\x)}\Big|\Delta v_m - c(\x)\sum_{n = 1}^N s_{mn} v_n\Big|^2 d\x
	%\\
%	\hspace{4cm}
	\geq
	C\max_{\x \in \overline \Omega} \{e^{2\lambda b^{-\beta} r^\beta(\x)}\} \lambda^{-1}\sum_{m = 1}^N
	\|(v_m)_l\|^2_{H^2(\Omega)}.
\end{equation*}
Applying the Lax-Milgram theorem, we can find a unique vector-valued function $(v_m)_{m = 1}^N$ satisfying (\ref{40}). 
The vector-valued function $(u_m)_{m = 1}^N$ can be found via (\ref{3939}).
\end{proof}

Denote by $\{(u_1^{(k)}, \dots, u_N^{(k)})\}$ the sequence of the minimize of $J^{(k)}$, $k \geq 0$ .
We claim that this sequence converges to the true solution of (\ref{system U}) and (\ref{boundary conditions}) in $L^2(\Omega)^N$ as $k \to \infty$.
The proof of this fact is based on the contraction principle and the Carleman estimate in Section \ref{Sec Carleman} plays an important role.

\section{A new Carleman estimate} \label{Sec Carleman}

In this section, we establish a new Carleman estimate, which has been used  to prove Theorem \ref{minimizer} that guarantees the unique existence of the functional $J^{(k)}$ in \eqref{Jk}, $k \geq 1$.
This estimate and its corollary, Corollary \ref{Col 3.1}, play a crucial role in the proof of our main result,  see Theorem \ref{main thm} which guarantees the convergence of our numerical method. 
\begin{Theorem}[Carleman estimate]
	Let $\x_0$ be a point in $\R^d \setminus \overline \Omega$ such that $r(\x) = |\x - \x_0| > 1$ for all $\x \in \Omega$.
	Let $b > \max_{\x \in \overline \Omega} r(\x)$ be a fixed constant.
	There exist positive constants $\beta_0$  depending only on $b$, $\x_0$, $\Omega$ and $d$ such that
	for all function $v \in C^2(\overline \Omega)$ satisfying 
	 \[
	 	v(\x) = \partial_{\nu} v(\x) = 0  \quad \mbox{for all } \x \in \partial \Omega,
	 \]
	the following estimate holds true
\begin{multline}
	\int_{\Omega} e^{2\lambda b^{-\beta} r^{\beta}(\x)}|\Delta v(\x)|^2 d\x
	\geq 
	\frac{C}{\lambda \beta^{7/4} b^{-\beta}}\sum_{i, j = 1}^d \int_{\Omega}e^{2\lambda b^{-\beta} r^\beta(\x)} r^{2\beta}(\x)|\partial^2_{ x_i x_j}v(\x)|^2 d\x
	\\
	%\hspace{-1cm}
	+ 	C \lambda^3 \beta^4 b^{-3 \beta} \int_{\Omega} r^{2\beta}(\x) e^{2\lambda b^{-\beta} r^{\beta}}|v(\x)|^2 d\x
	\\
	+ C \lambda \beta^{1/2} b^{-\beta}\int_{\Omega} e^{2\lambda b^{-\beta} r^{\beta}(\x)} |\nabla v(\x)|^2 d\x
	\label{Car est}
\end{multline}
	for $\beta \geq \beta_0$ and $\lambda \geq \lambda_0$.
		Here, $\lambda_0 = \lambda_0(b, \Omega, d, \x_0) > 1$ is a positive number with $\lambda_0 b^{-\beta} \gg 1$ and $C = C(b, \Omega, d, \x_0) > 1$ is a constant.  These numbers depend only on listed parameters.
	 \label{Carleman}
\end{Theorem}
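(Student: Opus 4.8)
The plan is to prove \eqref{Car est} by the pointwise Carleman technique for the Laplacian, conjugated by the weight. Write $\phi(\x)=\lambda b^{-\beta}r^{\beta}(\x)$ and substitute $w=e^{\phi}v$; since $v$ and $\partial_\nu v$ vanish on $\partial\Omega$ and $\phi\in C^{\infty}(\overline\Omega)$, the function $w$ lies in $C^{2}(\overline\Omega)$ and also satisfies $w=\partial_\nu w=0$ on $\partial\Omega$. Using $\nabla r=(\x-\x_0)/r$, $|\nabla r|=1$, $\Delta r=(d-1)/r$, one computes $\nabla\phi=\lambda\beta b^{-\beta}r^{\beta-1}\nabla r$, $|\nabla\phi|^{2}=\lambda^{2}\beta^{2}b^{-2\beta}r^{2\beta-2}$, $\Delta\phi=\lambda\beta(\beta+d-2)b^{-\beta}r^{\beta-2}$, and
\[
e^{\phi}\Delta v=\Delta w-2\nabla\phi\cdot\nabla w+\bigl(|\nabla\phi|^{2}-\Delta\phi\bigr)w=:\mathcal{L}w,
\]
so that $\int_{\Omega}e^{2\phi}|\Delta v|^{2}\,d\x=\int_{\Omega}|\mathcal{L}w|^{2}\,d\x$. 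The structural reason the estimate holds is that $|\nabla\phi|$ does not vanish on $\overline\Omega$ (because $r>1$ there) and, once $\lambda$ is large, $|\nabla\phi|^{2}$ dominates $|\Delta\phi|$; thus $\phi$ has no critical points and the commutator will control all lower-order contributions.

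Next I would split $\mathcal{L}w=\mathcal{L}_{1}w+\mathcal{L}_{2}w$ into the formally self-adjoint part $\mathcal{L}_{1}w=\Delta w+|\nabla\phi|^{2}w$ and the formally skew-adjoint part $\mathcal{L}_{2}w=-2\nabla\phi\cdot\nabla w-\Delta\phi\,w$, so that $\int_{\Omega}|\mathcal{L}w|^{2}\,d\x\ge 2\int_{\Omega}\mathcal{L}_{1}w\,\mathcal{L}_{2}w\,d\x$. The core of the argument is to expand this cross term by integration by parts. Because $w=\partial_\nu w=0$ on $\partial\Omega$, every boundary integral generated in this process vanishes --- this is precisely why both homogeneous Cauchy conditions are imposed on $v$. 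What remains is the integral of a pointwise quadratic form in $w$, $\nabla w$ and $\partial^{2}_{x_i x_j}w$ whose coefficients are explicit polynomials in $\lambda,\beta,b^{-\beta}$ times powers of $r$; collecting them, the dominant contributions are of order $\lambda^{3}\beta^{4}b^{-3\beta}$ (times a power of $r$ exceeding $r^{2\beta}$, which may be lowered to $r^{2\beta}$ since $r>1$) for the $|w|^{2}$ term and of order $\lambda\beta^{2}b^{-\beta}$ for the $|\nabla w|^{2}$ term. The terms generated by $\Delta\phi$ are lower order in $\lambda$ and, together with all remaining cross terms, are absorbed by first fixing $\beta\ge\beta_{0}$ large (to dominate the $\beta$-dependent constants, which depend only on $b$, $\x_0$, $\Omega$, $d$) and then taking $\lambda\ge\lambda_{0}$ large with $\lambda_{0}b^{-\beta}\gg 1$. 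This yields a first version of \eqref{Car est} containing the zero-order and gradient terms with the stated weights.

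To produce the second-order term I would use the elementary identity $\int_{\Omega}\sum_{i,j=1}^{d}|\partial^{2}_{x_i x_j}w|^{2}\,d\x=\int_{\Omega}|\Delta w|^{2}\,d\x$, valid whenever $w=\partial_\nu w=0$ on $\partial\Omega$ (two integrations by parts, the boundary terms again vanishing), together with its $r^{2\beta}$-weighted version, whose error terms carry only the harmless factor $\nabla(r^{2\beta})\sim\beta r^{2\beta-1}$. Since $\Delta w=\mathcal{L}_{1}w-|\nabla\phi|^{2}w$, one controls $\int_{\Omega}r^{2\beta}|\Delta w|^{2}$ by keeping a small, $\lambda$-dependent fraction of $\int_{\Omega}|\mathcal{L}_{1}w|^{2}$ (itself bounded below by a multiple of $\int_{\Omega}|\mathcal{L}w|^{2}$ once the cross term is set aside) and absorbing the resulting $|\nabla\phi|^{4}|w|^{2}$ remainder into the already-gained zero-order term; this is exactly what costs the factor $1/(\lambda\beta^{7/4}b^{-\beta})$ in front of the $D^{2}$-integral. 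Finally I would undo the substitution: writing $w=e^{\phi}v$ and expanding $\nabla w$ and $\partial^{2}_{x_i x_j}w$ in terms of $v,\nabla v,\partial^{2}_{x_i x_j}v$, the leading terms reproduce the three integrals on the right of \eqref{Car est}, while the corrections carry extra factors of $|\nabla\phi|$ --- hence lower powers of $\lambda$ --- and are absorbed once more; tracking the factors $r^{\beta-1}$ inside $\nabla\phi$ through this computation produces the $r^{2\beta}$ weights on the second-order and zero-order integrals.

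The main obstacle is the simultaneous bookkeeping of the three large parameters $\lambda$, $\beta$ and $b^{-\beta}$: unlike a textbook Carleman estimate with a single parameter, here $\beta$ sits inside the exponent as a second large parameter and the scaling $\lambda b^{-\beta}\gg 1$ must be respected, so the asymptotics have to be ordered carefully (first $\beta\ge\beta_{0}$, then $\lambda\ge\lambda_{0}$) and one must keep precise track of which powers of $\beta$ multiply which powers of $\lambda$. A closely related delicate point is that the coefficient of the $|\partial^{2}_{x_i x_j}v|^{2}$ term is only of size $1/\lambda$ (times a negative power of $\beta$), so one has to be careful not to overspend that gain while absorbing the various cross terms and the substitution corrections --- the somewhat unusual exponent $\beta^{7/4}$ in \eqref{Car est} is an artifact of this balancing.
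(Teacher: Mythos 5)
Your first stage (conjugating with $w=e^{\phi}v$, $\phi=\lambda b^{-\beta}r^{\beta}$, and extracting the zero-order and gradient terms from the cross term $2\langle\mathcal{L}_1w,\mathcal{L}_2w\rangle$) is a workable variant of what the paper does in its Lemmas, although your bookkeeping there is already imprecise: the terms generated by $\Delta\phi$ are \emph{not} lower order in $\lambda$ (e.g.\ $-2|\nabla\phi|^{2}\Delta\phi\,|w|^{2}\sim\lambda^{3}\beta^{3}(\beta+d-2)b^{-3\beta}r^{3\beta-4}|w|^{2}$ is the same order $\lambda^{3}$ as the main gain $2\,\mathrm{div}(|\nabla\phi|^{2}\nabla\phi)|w|^{2}$); positivity of the zero-order coefficient comes from the exact cancellation $ (3\beta-4+d)-(\beta+d-2)=2\beta-2>0$, not from absorption, and the gradient gain is of size $\lambda\beta b^{-\beta}r^{\beta-2}$ (from $4D^{2}\phi(\nabla w,\nabla w)$), not $\lambda\beta^{2}b^{-\beta}$. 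These points can be repaired. (For comparison, the paper does not even rely on the convexity of $\phi$ for the gradient term: it proves it separately from the multiplier identity $-\int e^{2\phi}v\,\Delta v\,d\x\geq C\int e^{2\phi}|\nabla v|^{2}d\x-C\lambda^{2}\beta^{2}b^{-2\beta}\int e^{2\phi}r^{2\beta}|v|^{2}d\x$ and a Cauchy--Schwarz splitting, then combines it with the commutator-type bound.)

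The genuine gap is in your derivation of the Hessian term, which is precisely the new feature of this estimate. You propose to bound $\tfrac{1}{\lambda\beta^{7/4}b^{-\beta}}\sum_{i,j}\int r^{2\beta}|\partial^{2}_{x_ix_j}w|^{2}d\x$ via $\Delta w=\mathcal{L}_1w-|\nabla\phi|^{2}w$ and to absorb the remainder $\tfrac{1}{\lambda\beta^{7/4}b^{-\beta}}\int r^{2\beta}|\nabla\phi|^{4}|w|^{2}d\x=\lambda^{3}\beta^{9/4}b^{-3\beta}\int r^{6\beta-4}|w|^{2}d\x$ into the zero-order gain, which is only $C\lambda^{3}\beta^{4}b^{-3\beta}\int r^{3\beta-4}|w|^{2}d\x$; the deficit is a factor $\beta^{-7/4}r^{3\beta}$ (still $\beta^{-7/4}r^{\beta}$ even without the $r^{2\beta}$ weight), which is exponentially large in $\beta$ since $r>1$ on $\overline\Omega$ (indeed $r$ ranges up to nearly $b$), and it cannot be beaten by enlarging $\lambda$ because both sides scale as $\lambda^{3}$. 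The same failure occurs when you ``undo the substitution'': the corrections in $\partial^{2}_{x_ix_j}w=e^{\phi}[\partial^{2}_{x_ix_j}v+\cdots]$ carry $|\nabla\phi|^{2}|\nabla v|$ and $|\nabla\phi|^{4}|v|^{2}$, and after multiplying by $r^{2\beta}/(\lambda\beta^{7/4}b^{-\beta})$ they are of size $\lambda\beta^{1/4}b^{-\beta}r^{4\beta-2}|\nabla v|^{2}$ and $\lambda^{3}\beta^{9/4}b^{-3\beta}r^{6\beta-4}|v|^{2}$, which the stated right-hand side terms ($\lambda\beta^{1/2}b^{-\beta}\int e^{2\phi}|\nabla v|^{2}$ and $\lambda^{3}\beta^{4}b^{-3\beta}\int e^{2\phi}r^{2\beta}|v|^{2}$) cannot absorb for the same reason: extra powers $r^{4\beta-2}$, resp.\ $r^{4\beta-4}$, are not ``lower order in $\lambda$'' issues at all. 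The paper avoids this entirely by never routing the Hessian term through $w$: it integrates $\int e^{2\phi}|\Delta v|^{2}d\x=\sum_{i,j}\int e^{2\phi}\partial^{2}_{x_ix_i}v\,\partial^{2}_{x_jx_j}v\,d\x$ by parts twice \emph{directly in $v$ against the full weight $e^{2\phi}$}, so the only error is $\lesssim\lambda\beta b^{-\beta}\int r^{\beta-1}e^{2\phi}|\nabla v|\,|D^{2}v|\,d\x$ (no zero-order remainder), and the split $r^{\beta-1}=r^{\beta}\cdot r^{-1}$ in Cauchy--Schwarz yields the $r^{2\beta}$-weighted Hessian integral at a cost of only $C\lambda\beta^{1/4}b^{-\beta}\int e^{2\phi}|\nabla v|^{2}d\x$, which the $\lambda\beta^{1/2}b^{-\beta}$ gradient term absorbs once $\beta\geq\beta_{0}$ -- this is exactly where the exponents $7/4$, $1/2$, $1/4$ in the statement come from. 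As written, your proposal does not reach the inequality \eqref{Car est}.
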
	
\begin{remark}
	The Carleman estimate in Theorem \ref{Carleman} is more complicated than the version in \cite{NguyenLiKlibanov:IPI2019}.
	The reason for us to establish this new estimate is that the Carleman weight function in \cite{NguyenLiKlibanov:IPI2019} decays fast when $\lambda \gg 1$, causing poor numerical results.
	Unlike this, the Carleman estimate in Theorem \ref{Carleman} allows us to choose large $\lambda$ in implementation, making the theory and the computational codes more consistent.
\end{remark}

\begin{remark}
	A new feature in Theorem \ref{Carleman} the presence of all second derivatives of the function $v$ on the right-hand side of \eqref{Car est}. 
	This makes it more convenient for us to prove the existence and uniqueness of the regularized solutions to a system of nonlinear elliptic equations appearing in our analysis in Section \ref{sec method}, see Theorem \ref{minimizer}.
\end{remark}

We split the proof of Theorem \ref{Carleman} into four lemmas, Lemma \ref{lem 3.4}--Lemma \ref{lem 3.3}.

\begin{Lemma} Let $v$ be the function as in Theorem \ref{Carleman}.
There exists a positive constant $\beta_0$  depending only on $b$, $\x_0$, $\Omega$ and $d$ such that
\begin{multline}
%\hspace{-2cm}
\int_{\Omega}\frac{e^{2\lambda b^{-\beta} r^\beta(\x)}|\Delta v(\x)|^2}{4 \lambda \beta b^{-\beta} r^{\beta - 2}(\x)} d\x
%\nonumber
	%\\
	%\hspace{-1cm}
	\geq 
	C\lambda^2 \beta^3 b^{-2\beta} \int_{\Omega} r^{2\beta}(\x) e^{2\lambda b^{-\beta} r^{\beta}(\x)}|v(\x)|^2 d\x 
	\\
	- C \int_{\Omega} e^{2\lambda b^{-\beta} r^{\beta}(\x)} |\nabla v(\x)|^2 d\x
	\label{4.14}
\end{multline}
for all $\beta \geq \beta_0$ and $\lambda \geq \lambda_0$. Here, $\lambda_0$ is a constant such that $\lambda_0 b^{-\beta} \gg 1$.
\label{lem 3.4}
\end{Lemma}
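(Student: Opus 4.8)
The plan is to establish the pointwise Carleman-type differential inequality and then integrate over $\Omega$, using the boundary conditions $v = \partial_\nu v = 0$ on $\partial\Omega$ to discard all boundary terms arising from integration by parts. The natural first step is to introduce the substitution $w(\x) = e^{\lambda b^{-\beta} r^\beta(\x)} v(\x)$, so that $v = e^{-\lambda b^{-\beta} r^\beta} w$ and $\Delta v = e^{-\lambda b^{-\beta} r^\beta}\big(\Delta w - 2\lambda b^{-\beta}\nabla(r^\beta)\cdot\nabla w + (\lambda^2 b^{-2\beta}|\nabla(r^\beta)|^2 - \lambda b^{-\beta}\Delta(r^\beta))w\big)$. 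Since $v$ and its normal derivative vanish on $\partial\Omega$, so do $w$ and $\partial_\nu w$, which is what makes the boundary terms disappear. I would compute $\nabla(r^\beta) = \beta r^{\beta-2}(\x-\x_0)$ and $\Delta(r^\beta) = \beta r^{\beta-2}(d - 2 + \beta) + \beta(\beta-2)r^{\beta-2}$ — i.e.\ $\Delta(r^\beta) = \beta(\beta + d - 2)r^{\beta-2}$ — so that $|\nabla(r^\beta)|^2 = \beta^2 r^{2\beta-2}$; the key gain is the factor $\lambda^2\beta^2 b^{-2\beta} r^{2\beta-2}$ multiplying $|w|^2$, which together with the extra $r^2 \geq 1$ coming from $r \geq 1$ on $\overline\Omega$ yields the claimed $\lambda^2\beta^3 b^{-2\beta} r^{2\beta}|v|^2$ after accounting for the $e^{-2\lambda b^{-\beta} r^\beta}$ factor.

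Next I would expand $\int_\Omega \frac{e^{2\lambda b^{-\beta} r^\beta}|\Delta v|^2}{4\lambda\beta b^{-\beta} r^{\beta-2}}\,d\x = \int_\Omega \frac{|\Delta w - 2\lambda b^{-\beta}\nabla(r^\beta)\cdot\nabla w + V w|^2}{4\lambda\beta b^{-\beta} r^{\beta-2}}\,d\x$ where $V = \lambda^2 b^{-2\beta}|\nabla(r^\beta)|^2 - \lambda b^{-\beta}\Delta(r^\beta)$, and isolate the cross term $2\,\langle \Delta w - Vw\, , \, -2\lambda b^{-\beta}\nabla(r^\beta)\cdot\nabla w\rangle$ divided by the weight. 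The dominant contribution comes from integrating the piece $-2\lambda b^{-\beta}\langle \Delta w, \nabla(r^\beta)\cdot\nabla w\rangle / (4\lambda\beta b^{-\beta} r^{\beta-2})$ by parts: this is the standard maneuver of moving one derivative off $\Delta w$ to land on $\nabla w$ and on the weight $\nabla(r^\beta)/r^{\beta-2}$, producing a positive-definite quadratic form in $\nabla w$ plus a term proportional to $|\nabla w|^2$ times the divergence of the coefficient field; the $|\nabla w|^2$ terms are then absorbed, and after returning to $v$ they account for the negative term $-C\int_\Omega e^{2\lambda b^{-\beta} r^\beta}|\nabla v|^2\,d\x$ on the right-hand side. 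The term $\langle -Vw, -2\lambda b^{-\beta}\nabla(r^\beta)\cdot\nabla w\rangle$ is integrated by parts once more (writing $\nabla(r^\beta)\cdot\nabla w \cdot w = \tfrac12 \nabla(r^\beta)\cdot\nabla(w^2)$) to generate $+C\lambda^2\beta^3 b^{-2\beta}\int_\Omega r^{2\beta-2} e^{2\lambda b^{-\beta} r^\beta}|v|^2\,d\x$ as the leading positive term — here one uses that $V \approx \lambda^2\beta^2 b^{-2\beta} r^{2\beta-2}$ to leading order in $\lambda$, and that the divergence of $V\nabla(r^\beta)/r^{\beta-2}$ has the right sign and magnitude. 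Throughout I would discard the genuinely positive squared terms $|\Delta w|^2$ and $V^2|w|^2$ (divided by the positive weight) when they are not needed, keeping only what is required to match the right-hand side of \eqref{4.14}.

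The main obstacle I expect is bookkeeping the powers of $\beta$ and $b^{-\beta}$ correctly and ensuring the cross terms have the favorable sign. Because $r^\beta$ is not smooth at $\x_0$ but $\x_0 \notin \overline\Omega$ there is no regularity issue, yet the derivatives of $r^\beta$ carry factors of $\beta$, $\beta-1$, $\beta-2$, and the choice $b > \max_{\overline\Omega} r$ guarantees $b^{-\beta} r^\beta \leq 1$ and controls the exponential weight, while $r \geq 1$ converts $r^{2\beta-2}$ bounds into $r^{2\beta}$ bounds; one must check that for $\beta \geq \beta_0$ large the "good" terms dominate the error terms uniformly, and that the condition $\lambda_0 b^{-\beta}\gg 1$ is exactly what lets the $\lambda^2$ term beat the lower-order $\lambda$ contributions coming from $\Delta(r^\beta)$. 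I would organize the estimate so that all error terms are either of the form (small constant)$\times$(leading positive term) or absorbed into the $-C\int e^{2\lambda b^{-\beta} r^\beta}|\nabla v|^2$ slack that the statement already allows, then collect constants into a single $C = C(b,\Omega,d,\x_0)$.
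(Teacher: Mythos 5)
Your plan follows the paper's proof of Lemma \ref{lem 3.4} essentially step for step: the substitution $w=e^{\lambda b^{-\beta}r^{\beta}}v$, keeping only the cross terms that involve the gradient part, dividing by the weight $4\lambda\beta b^{-\beta}r^{\beta-2}$ \emph{before} integrating by parts, and absorbing the lower-order contributions using $\lambda b^{-\beta}\gg1$ and $\beta$ large. However, as written, the key step carries a sign slip that would make it fail. Writing the bracket as $\Delta w+B+Vw$ with $B=-2\lambda\beta b^{-\beta}r^{\beta-2}\,\x\cdot\nabla w$ and $V=\lambda^{2}\beta^{2}b^{-2\beta}r^{2\beta-2}-\lambda\beta(\beta+d-2)b^{-\beta}r^{\beta-2}$, the cross terms involving $B$ are $2\langle \Delta w+Vw,\,B\rangle$ (this is exactly the paper's elementary inequality $(a-b+c)^{2}\geq-2ab-2bc$), not $2\langle \Delta w-Vw,\,B\rangle$ as you wrote. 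The piece that generates the leading positive term is $+\langle Vw,\,B\rangle$: after division by the weight it equals $-\tfrac12 Vw\,\x\cdot\nabla w$ (up to the factor $2$), whose leading part $-\lambda^{2}\beta^{2}b^{-2\beta}r^{2\beta-2}\,w\,\x\cdot\nabla w$ integrates by parts via $w\nabla w=\tfrac12\nabla(w^{2})$ to $+\tfrac12\lambda^{2}\beta^{2}(2\beta-2+d)\,b^{-2\beta}\int_{\Omega}r^{2\beta-2}|w|^{2}\,d\x$, i.e.\ the paper's term $I_3$ in \eqref{4.8}--\eqref{4.11}. The term you propose to integrate by parts, $\langle -Vw,\,B\rangle$, is exactly the negative of this, so with your sign the ``leading positive term'' would come out negative and the lemma would not follow; the fix is simply to take the cross term with the correct sign (and to drop $2\langle\Delta w,Vw\rangle$ outright, as you do, which is legitimate because $(\Delta w+Vw)^{2}+B^{2}\geq0$ — keeping it would create a gradient term of size $\lambda\beta b^{-\beta}r^{\beta}|\nabla w|^{2}$ that cannot be absorbed into the $O(1)$ gradient slack).

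Two smaller points of bookkeeping. The upgrade from $r^{2\beta-2}$ to $r^{2\beta}$ in the zero-order term is not a consequence of $r\geq1$ (that inequality goes the wrong way for a lower bound); it uses the upper bound $r\leq\max_{\overline\Omega}r<b$, so that $r^{2\beta-2}\geq(\max_{\overline\Omega}r)^{-2}r^{2\beta}$ with the constant absorbed into $C(b,\Omega,d,\x_0)$. And when returning from $w$ to $v$ at the end, $|\nabla w|^{2}$ produces, besides $e^{2\lambda b^{-\beta}r^{\beta}}|\nabla v|^{2}$, an additional term of size $\lambda^{2}\beta^{2}b^{-2\beta}r^{2\beta}e^{2\lambda b^{-\beta}r^{\beta}}|v|^{2}$; this must be absorbed into the leading $\lambda^{2}\beta^{3}b^{-2\beta}$ term using $\beta^{3}\gg\beta^{2}$, a step worth stating explicitly since it is where the largeness of $\beta$ (and not only of $\lambda b^{-\beta}$) is used.
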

\begin{proof}
By changing variables, if necessary, we can assume that $\x_0 = 0.$
Define the function 
\begin{equation}
	w(\x) = e^{\lambda b^{-\beta} r^\beta(\x)} v(\x) 
	\quad \mbox{or} \quad
	v(\x) =  e^{-\lambda b^{-\beta} r^\beta(\x)} w(\x) 
	\label{5.11}
\end{equation}
for all $\x \in \Omega$.
Since $v$ vanishes on $\partial \Omega$, so does $w$. 
On the other hand, by the product rule in differentiation, for all $\x \in \Omega$,
\begin{equation}
	\nabla v(\x)= e^{-\lambda b^{-\beta} r^\beta(\x)} \nabla w(\x) -\beta \lambda b^{-\beta} r^{\beta - 2}(\x)  e^{-\lambda b^{-\beta} r^\beta(\x)} w(\x)\x 
	\label{5.1}
\end{equation}
It follows that
\[
	e^{-\lambda b^{-\beta} r^\beta(\x)} \nabla w(\x) \cdot \nu = \nabla v(\x)\cdot \nu  
	+ \beta\lambda b^{-\beta} r^{\beta - 2}(\x)  e^{-\lambda b^{-\beta} r^\beta(\x)} w(\x)\x = 0.
\]  
 for all $\x \in \partial \Omega$.
We thus obtain 
$
	w(\x) = \partial_\nu  w(\x) = 0 
	%\quad \mbox{for all } \x \in \partial \Omega.
%	\label{v in H0}
$ for all $\x \in \partial \Omega$.
Hence, from now on, whenever we apply the integration by parts formula on $v$ and $w$, the integrals on $\partial \Omega$ vanishes.
We next compute the Laplacian of $v$ in terms of $w$. 
For all $\x \in \Omega,$
\begin{align*}
	%\hspace{-1cm}
	\Delta v(\x) &= e^{-\lambda b^{-\beta} r^\beta(\x)} \Delta w(\x) 
	+ 2 \nabla e^{-\lambda b^{-\beta} r^\beta(\x)} \cdot \nabla w(\x) 
	+ w(\x) \Delta (e^{-\lambda b^{-\beta} r^\beta(\x)})
	\\
	& = e^{-\lambda b^{-\beta} r^\beta(\x)} \Big[
		\Delta w(\x)
		- 2  \lambda \beta b^{-\beta} r^{\beta - 2}(\x)\nabla w(\x) \cdot 
\x
\\
& \hspace{6cm}
 				+e^{\lambda b^{-\beta} r^\beta(\x)} \Delta (e^{-\lambda b^{-\beta} r^\beta(\x)}) w(\x)
	\Big].
\end{align*}
Using the inequality $(a - b + c)^2 \geq -2ab - 2bc,$ we have
\begin{multline}
	%\hspace{-1cm}
	|\Delta v(\x)|^2 \geq -4 \lambda \beta b^{-\beta} r^{\beta - 2}(\x) e^{-2\lambda b^{-\beta} r^\beta(\x)}
	\Big[
		\Delta w(\x)\nabla w(\x) \cdot \x
%		\nonumber
		\\		
	%\hspace{4.5cm}	
	+ e^{\lambda b^{-\beta} r^\beta(\x)} \Delta (e^{-\lambda b^{-\beta} r^\beta(\x)})w(\x) \nabla w(\x) \cdot \x 
	\Big] 
\label{4.4}
\end{multline} 
for all $\x \in \Omega.$
By a straight forward computation, for $\x \in \Omega,$
\[
	\Delta(e^{-\lambda b^{-\beta} r^\beta(\x)}) = - \lambda \beta  b^{-\beta} e^{-\lambda b^{-\beta} r^{\beta}(\x)} r^{\beta - 2}(\x) \big[
	(\beta - 2 + d)	 - \lambda b^{-\beta} \beta r^\beta(\x)  
	\big].
\]
Plugging this into (\ref{4.4}) gives
\begin{multline*}
	%\hspace{-1cm}
	|\Delta v(\x)|^2 \geq -4 \lambda  \beta b^{-\beta} r^{\beta - 2}(\x) e^{-2\lambda b^{-\beta} r^\beta(\x)}
	\Big[
		\Delta w(\x)\nabla w(\x) \cdot \x
				\\
	%\hspace{2cm}	
	 -\lambda \beta b^{-\beta} r^{\beta - 2}(\x) \big[
	(\beta - 2 + d)	 - \lambda \beta b^{-\beta} r^\beta(\x)  
	\big] w(\x)\nabla w(\x) \cdot \x 
	\Big] 
\end{multline*} 
for all $\x \in \Omega.$
Hence,
\begin{equation}
	\int_{\Omega}\frac{e^{2\lambda b^{-\beta} r^\beta(\x)}|\Delta v(\x)|^2}{4 \lambda \beta  b^{-\beta} r^{\beta - 2}(\x)} d\x
	\geq I_1 + I_2 + I_3 
	\label{4.5}
\end{equation}
where
\begin{eqnarray}
	I_1 &=& -\int_{\Omega}\Delta w(\x) \nabla w(\x) \cdot  \x d\x, \label{4.6}
	\\
	I_2 &=& \lambda \beta b^{-\beta} (\beta - 2 + d) \int_{\Omega}r^{\beta - 2}(\x) w(\x) \nabla w(\x) \cdot \x d\x, \label{4.7}
	\\
	I_3 & =& - \lambda^2 b^{-2\beta} \beta^2 \int_{\Omega}r^{2\beta - 2}(\x) w(\x) \nabla w(\x) \cdot \x d\x. \label{4.8}
\end{eqnarray}

\noindent{\bf Estimate $I_1$.}
Write $\x = (x_1, \dots, x_d)$  and integrating $I_1$ by parts. 
It follows from (\ref{4.6}) that $I_1$ is equal to
\begin{align*}
	%\hspace{-2cm}
	 \int_{\Omega} \nabla w(\x) &\cdot \nabla [\nabla w(\x) \cdot \x] d\x
	 \\
	&
	= \sum_{i, j = 1}^d\int_{\Omega}  \partial_{x_i} w(\x) \partial_{x_i} x_j\partial_{x_j} w(\x))d\x
	\\
	& = \sum_{i, j = 1}^d \int_{\Omega} \partial_{x_i} w(\x) [\partial_{x_j} w(\x) \delta_{ij} + x_j \partial_{x_i x_j} w(\x)] d\x
	\\
	& = \sum_{i = 1}^d \int_{\Omega} |\partial_{x_i} w(\x)|^2 d\x 
	+ \sum_{i, j = 1}^d \int_{\Omega} x_j\partial_{x_i} w(\x) \partial_{x_j x_i} w(\x)  d\x.
\end{align*}
Using the identity $\phi(\x) \partial_{x_j} \phi(\x) = \frac{1}{2} \partial_{x_j} (\phi(\x)^2)$ with $\Phi(\x) = \partial_{x_i}w(\x)$ gives
\begin{eqnarray*}
	I_1 &=& \int_{\Omega} |\nabla w(\x)|^2 d\x + \frac{1}{2} \sum_{i, j = 1}^d \int_{\Omega} x_j\partial_{x_j} (\partial_{x_i} w(\x))^2 d\x
	\\
	 &=& \int_{\Omega} |\nabla w(\x)|^2 d\x - \frac{1}{2} \sum_{i, j = 1}^d \int_{\Omega}  (\partial_{x_i} w(\x))^2\partial_{x_j} x_j d\x.
\end{eqnarray*}
Hence, 
\begin{equation}
	I_1 = \Big(1 - \frac{d}{2}\Big) \int_{\Omega}|\nabla w(\x)|^2d\x.
\label{4.9}
\end{equation}

\noindent{\bf Estimate $I_2$.} 
We apply the identity $w \nabla w = \frac{1}{2} \nabla |w|^2$ to get from (\ref{4.7})
\begin{eqnarray*}
	I_2 &=& \frac{\lambda \beta b^{-\beta} (\beta - 2 + d)}{2} \int_{\Omega}r^{\beta - 2}(\x)  \nabla |w(\x)|^2 \cdot \x d\x\\
	&=& -\frac{\lambda \beta b^{-\beta} (\beta - 2 + d)}{2} \int_{\Omega}  |w(\x)|^2 \Div (r^{\beta - 2}(\x) \x) d\x.
\end{eqnarray*}
Here, the integration by parts formula was used.
We; therefore, obtain 
\begin{equation}
	I_2 = -\frac{\lambda \beta b^{-\beta} (\beta - 2 + d)^2}{2} \int_{\Omega}  |w(\x)|^2  d\x.
	\label{4.10}
\end{equation}

%
%We apply the inequality $ab \leq a^2/2 + b^2/2$. It follows from (\ref{4.7}) that
%\begin{eqnarray*}
%	|I_2| 
%	& \leq&    \int_{\Omega} |\lambda \beta b^{-\beta} r^{\beta - 2}(\x) w(\x)| \big(|\beta - 2 + d|  |\nabla w(\x)|\big) d\x
%	\\
%	&\leq&  \lambda^2 \beta^2 b^{-2\beta} \int_{\Omega} r^{2\beta - 4}(\x)|w(\x)|^2 d\x
%	+ \int_{\Omega} (\beta - 2 + d)^2  \nabla w(\x)|^2 d\x.
%\end{eqnarray*}
%Since  $\beta$ is large, 
%\begin{equation}
%	I_2 \geq - C\lambda^2 \beta^2 b^{-2\beta}  \int_{\Omega} r^{2\beta}(\x) |w(\x)|^2 d\x - C \beta^2 \int_{\Omega}|\nabla w(\x)|^2 d\x
%	\label{4.10}
%\end{equation}
%where $C$ is a generous constant depending on  $d$, $\Omega$ and $\x_0$.

\noindent{\bf Estimate $I_3.$} Using integration by parts formula again, by (\ref{4.8}), 
\begin{eqnarray*}
	I_3 &=& - \frac{\lambda^2 \beta^2 b^{-2\beta}}{2} \int_{\Omega}r^{2\beta - 2}(\x)  \nabla |w(\x)|^2 \cdot \x d\x
	\\
	&=& \frac{\lambda^2 \beta^2 b^{-2\beta}}{2} \int_{\Omega}  |w(\x)|^2 \Div[r^{2\beta - 2}(\x)\x] d\x.
\end{eqnarray*}
Hence,
\begin{eqnarray}
	I_3 &=& \frac{\lambda^2  \beta^2(2\beta - 2 + d) b^{-2\beta}}{2} \int_{\Omega}  |w(\x)|^2 r^{2\beta -2 }(\x) d\x
	\nonumber
	\\
	&\geq& C \lambda^2 \beta^3 b^{-2\beta}\int_{\Omega} r^{2\beta}(\x)|w(\x)|^2 d\x.
	\label{4.11}
\end{eqnarray}

Combining (\ref{4.5}), (\ref{4.9}), (\ref{4.10}) and (\ref{4.11}) and using the fact that $\lambda_0 b^{-\beta} \gg 1$ (which implies $\lambda b^{-\beta} \gg 1$), we get
\begin{equation}
	\int_{\Omega}\frac{e^{2\lambda b^{-\beta} r^\beta(\x)}|\Delta v(\x)|^2}{4\lambda \beta b^{-\beta}  r^{\beta - 2}(\x)} d\x 
	%\\
	\geq 
	C\lambda^2 \beta^3 b^{-2\beta} \int_{\Omega} r^{2\beta}(\x) |w(\x)|^2 d\x - C \int_{\Omega} |\nabla w(\x)|^2 d\x.
	\label{4.12}
\end{equation}
Recall (\ref{5.11}) that $w = e^{\lambda b^{-\beta} r^{\beta}} v$. 
We have for all $\x \in \Omega$,
\begin{equation}
	\nabla w(\x) 
	%= \nabla (e^{\lambda r^\beta(\x)} v(\x)) 
%	= e^{\lambda r^\beta(\x)} \nabla v(\x) +  v(\x)\nabla (e^{\lambda r^\beta(\x)})
	= e^{\lambda b^{-\beta} r^\beta(\x)} [\nabla v(\x) + \lambda b^{-\beta} \beta r^{\beta - 2}(\x)v(\x) \x]. 
	\label{4.13}
\end{equation}
It follows from (\ref{5.11}), (\ref{4.12}), (\ref{4.13}), the triangle inequality and the fact $\beta^3 \gg \beta^2$ that
\begin{equation*}
		%\hspace{-2cm}
		\int_{\Omega}\frac{e^{2\lambda b^{-\beta} r^\beta(\x)}|\Delta v(\x)|^2}{4 \lambda \beta b^{-\beta} r^{\beta - 2}(\x)} d\x
	%\\
	%\hspace{-1cm}
	\geq 
	C\lambda^2 \beta^3 b^{-2\beta} \int_{\Omega} r^{2\beta}(\x) e^{2\lambda b^{-\beta} r^{\beta}(\x)}|v(\x)|^2 d\x 
	%\\
	- C \int_{\Omega} e^{2\lambda b^{-\beta} r^{\beta}(\x)} |\nabla v(\x)|^2 d\x.
%	\label{4.14}
\end{equation*}
Recall that $\rho = \max_{\x \in \overline \Omega} r(
x)$. 
We have obtained the desired inequality (\ref{4.14}).
\end{proof}

\begin{Lemma}
Let $v$ be the function that satisfying all hypotheses of Theorem \ref{Carleman}.
There exist positive constants $\beta_0$ and $\lambda_0$  depending only on $b$, $\x_0$, $\Omega$ and $d$ such that
	\begin{multline}
	%	\hspace{-2cm}
		-\int_{\Omega} e^{2\lambda b^{-\beta} r^{\beta}(\x)} v(\x) \Delta v(\x) d\x
	%	\nonumber
	%	\\
	%	\hspace{-1cm}
		\geq 
		C \int_{\Omega} e^{2\lambda b^{-\beta} r^{\beta}(\x)} |\nabla v(\x)|^2 d\x
		\\
	- C\lambda^2 \beta^2 b^{-2\beta}\int_{\Omega} e^{2\lambda b^{-\beta}r^{2\beta}(\x)} r^{2\beta}(\x)|v(\x)|^2d\x
		\label{4.15}
	\end{multline}
for all $\beta \geq \beta_0$ and $\lambda \geq \lambda_0.$
\label{lem 3.2}
\end{Lemma}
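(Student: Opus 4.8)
The plan is to derive \eqref{4.15} directly from two integrations by parts, using only that $v$ (hence every boundary integral that arises) vanishes on $\partial\Omega$. As in the proof of Lemma \ref{lem 3.4}, I would first assume $\x_0=0$ and abbreviate the squared Carleman weight by $\mu(\x)=e^{2\lambda b^{-\beta}r^{\beta}(\x)}$. Since $\nabla r^{\beta}(\x)=\beta r^{\beta-2}(\x)\x$, one has $\nabla\mu(\x)=2\lambda\beta b^{-\beta}r^{\beta-2}(\x)\mu(\x)\x$, and this is the only structural fact about the weight that will be needed.

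Integrating $-\int_{\Omega}\mu v\Delta v\,d\x$ by parts and discarding the boundary term (which vanishes because $v|_{\partial\Omega}=0$) gives
\[
	-\int_{\Omega}\mu v\Delta v\,d\x=\int_{\Omega}\nabla(\mu v)\cdot\nabla v\,d\x=\int_{\Omega}\mu|\nabla v|^{2}\,d\x+\int_{\Omega}v\,\nabla\mu\cdot\nabla v\,d\x.
\]
For the last term I would insert the expression for $\nabla\mu$ and use $v\nabla v=\tfrac12\nabla(v^{2})$ to write it as $\lambda\beta b^{-\beta}\int_{\Omega}\mu r^{\beta-2}\,\x\cdot\nabla(v^{2})\,d\x$, then integrate by parts once more (again the boundary term drops out) to obtain $-\lambda\beta b^{-\beta}\int_{\Omega}v^{2}\,\Div\!\big(\mu r^{\beta-2}\x\big)\,d\x$. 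A short computation, using $\Div(r^{\beta-2}\x)=(\beta-2+d)r^{\beta-2}$ and $\nabla\mu\cdot r^{\beta-2}\x=2\lambda\beta b^{-\beta}r^{2\beta-2}\mu$, yields
\[
	\Div\!\big(\mu r^{\beta-2}\x\big)=\mu\big[(\beta-2+d)r^{\beta-2}+2\lambda\beta b^{-\beta}r^{2\beta-2}\big],
\]
so that
\[
	-\int_{\Omega}\mu v\Delta v\,d\x=\int_{\Omega}\mu|\nabla v|^{2}\,d\x-\lambda\beta b^{-\beta}(\beta-2+d)\int_{\Omega}\mu r^{\beta-2}v^{2}\,d\x-2\lambda^{2}\beta^{2}b^{-2\beta}\int_{\Omega}\mu r^{2\beta-2}v^{2}\,d\x.
\]

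It then remains to absorb the two negative terms into the right-hand side of \eqref{4.15}. Since $1\le r(\x)\le b$ on $\overline{\Omega}$, one has $r^{\beta-2}\le C r^{2\beta}$ and $r^{2\beta-2}\le C r^{2\beta}$ with $C=C(b)$; moreover, because $\lambda_{0}b^{-\beta}\gg1$, the coefficient $\lambda\beta b^{-\beta}(\beta-2+d)$ is bounded above by $C\lambda^{2}\beta^{2}b^{-2\beta}$ once $\beta\ge\beta_{0}$ and $\lambda\ge\lambda_{0}$. Hence both negative contributions are $\ge -C\lambda^{2}\beta^{2}b^{-2\beta}\int_{\Omega}\mu r^{2\beta}v^{2}\,d\x$, and retaining $\int_{\Omega}\mu|\nabla v|^{2}\,d\x$ (with the trivial constant $1\le C$) gives exactly \eqref{4.15}. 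I do not expect a genuine obstacle in this lemma: the only points needing care are the vanishing of the boundary integrals (guaranteed by the hypothesis $v|_{\partial\Omega}=0$) and the bookkeeping of the weight's derivatives; the rest is a routine application of integration by parts together with the boundedness of $r$ on $\overline{\Omega}$.
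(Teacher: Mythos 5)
Your proposal is correct, and its first step coincides with the paper's: integrate $-\int_{\Omega}\mu v\Delta v\,d\x$ by parts once (boundary term vanishing since $v|_{\partial\Omega}=0$) to produce $\int_{\Omega}\mu|\nabla v|^2\,d\x$ plus the cross term $\int_{\Omega}v\,\nabla\mu\cdot\nabla v\,d\x$. Where you diverge is in the treatment of that cross term: the paper simply bounds its absolute value by Young's inequality, $2\lambda\beta b^{-\beta}r^{\beta-1}\mu|v||\nabla v|\le \tfrac12\mu|\nabla v|^2+C\lambda^2\beta^2b^{-2\beta}\mu r^{2\beta}|v|^2$ (using $r>1$), which gives the lemma immediately and with no constraint on $\lambda,\beta$ beyond positivity; you instead write $v\nabla v=\tfrac12\nabla(v^2)$ and integrate by parts a second time, obtaining the exact identity with the terms $\lambda\beta b^{-\beta}(\beta-2+d)\mu r^{\beta-2}v^2$ and $2\lambda^2\beta^2b^{-2\beta}\mu r^{2\beta-2}v^2$, and then absorb both into $C\lambda^2\beta^2b^{-2\beta}\mu r^{2\beta}v^2$. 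Your computation of $\operatorname{div}(\mu r^{\beta-2}\x)$ is right, and the absorption is valid, but note that the bound $(\beta-2+d)\le C\lambda\beta b^{-\beta}r^{\beta+2}$ genuinely uses the standing regime $\lambda b^{-\beta}\gtrsim 1$ (since $r<b$, the factor $b^{-\beta}r^{\beta+2}$ is small, not large); this is consistent with the hypotheses of Theorem \ref{Carleman} and with the other lemmas, but the paper's Young-inequality route avoids it for this particular lemma, so the paper's statement can keep $\lambda_0$ free of that condition. A side benefit of your version is that the exact identity makes transparent that the correct weight in the second term of \eqref{4.15} is $e^{2\lambda b^{-\beta}r^{\beta}(\x)}$; the exponent $r^{2\beta}$ printed there is a typo in the paper.
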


\begin{proof}
	By integrating by parts, we have
	\begin{align}
		\int_{\Omega} &e^{2\lambda b^{-\beta} r^{\beta}(\x)} v(\x) \Delta v(\x) d\x
		\nonumber
		\\
		&= 
		\int_{\Omega}  \nabla v(\x) \cdot \nabla \big( e^{2\lambda b^{-\beta} r^{\beta}(\x)} v(\x) \big)d\x 
		\nonumber
		\\
		&= \int_{\Omega} e^{2\lambda b^{-\beta} r^{\beta}(\x)} |\nabla v(\x)|^2 d\x
		+  \int_{\Omega} v(\x)\nabla v(\x) \cdot 
		\nabla \big( e^{2\lambda b^{-\beta} r^{\beta}(\x)}\big)d\x.
		\label{27}
	\end{align}
	The absolute value of second integral in the right hand side of (\ref{27}) can be estimated as
	\begin{align}
	%\hspace{-2cm}
	\Big|\int_{\Omega} & v(\x)\nabla v(\x) \cdot 
		\nabla \big( e^{2\lambda b^{-\beta} r^{\beta}(\x)}\big)d\x\Big| \nonumber
		\\
		&\leq  2\lambda \beta b^{-\beta}\int_{\Omega}  r^{\beta - 1}(\x)e^{2\lambda b^{-\beta} r^{\beta}(\x)}|v(\x)||\nabla v(\x)| 
		 d\x
		 \nonumber
		 \\
		 &\leq 
		 C\lambda^2 \beta^2 b^{-2\beta}\int_{\Omega} e^{2\lambda b^{-\beta}r^{\beta}(\x)} r^{2\beta}(\x)|v(\x)|^2d\x
	%	\nonumber
%		\\
%		&\hspace{4cm}
		+ \frac{1}{2} \int_{\Omega}  e^{2\lambda b^{-\beta}r^{\beta}(\x)} |\nabla v(\x)|^2 d\x.
		\label{2828}
	\end{align}
	%Since $r(\x) > 1$, $r^{-\beta - 2}(\x) \gg r^{-2\beta}(\x)$ for all $\x \in \Omega$. 
	This, (\ref{27})  and (\ref{2828}) imply
	\begin{multline*}
		%\hspace{-2cm}
		-\int_{\Omega} e^{2\lambda b^{-\beta} r^{\beta}(\x)} v(\x) \Delta v(\x) d\x
		%\\
		\geq 
		C \int_{\Omega} e^{2\lambda b^{-\beta} r^{\beta}(\x)} |\nabla v(\x)|^2 d\x
%		\\
%		\hspace{2cm}- 
\\
		-C\lambda^2 \beta^2 b^{-2\beta}\int_{\Omega} e^{2\lambda b^{-\beta}r^{2\beta}(\x)} r^{2\beta}(\x)|v(\x)|^2d\x.
	\end{multline*}
	The lemma is proved.
\end{proof}

\begin{Lemma}
Let $v$ be the function satisfying all hypotheses of Theorem \ref{Carleman}.
There exist positive constants $\beta_0$  depending only on $b$, $\x_0$, $\Omega$ and $d$ such that
%\begin{eqnarray}
%		\hspace{-2cm}
%		 \int_{\Omega} \frac{e^{2\lambda b^{-\beta} r^{\beta}(\x)}}{\lambda \beta b^{-\beta}  r^{\beta - 2}(\x)} |\Delta v(\x)|^2 d\x
%		 \geq C \lambda^2 \beta^3 b^{-2 \beta} \int_{\Omega} r^{2\beta}(\x) e^{2\lambda b^{-\beta} r^{\beta}}|v(\x)|^2 d\x
%		 \nonumber
%		 \\
%		\hspace{6cm} + C \beta^{1/2} \int_{\Omega} e^{2\lambda b^{-\beta} r^{\beta}(\x)} |\nabla v(\x)|^2 d\x.	
%		\label{29}
%	\end{eqnarray}
	\begin{multline}
	%\hspace{-2cm}
	\int_{\Omega} e^{2\lambda b^{-\beta} r^{\beta}(\x)}|\Delta v(\x)|^2 d\x 
	%\nonumber
	\geq 
	C \lambda^3 \beta^4 b^{-3 \beta} \int_{\Omega} r^{2\beta}(\x) e^{2\lambda b^{-\beta} r^{\beta}}|v(\x)|^2 d\x
	\\
		%\hspace{5cm} 
		+ C \lambda \beta^{1/2} b^{-\beta}\int_{\Omega} e^{2\lambda b^{-\beta} r^{\beta}(\x)} |\nabla v(\x)|^2 d\x	
		\label{29}
	\end{multline}
for all $\beta \geq \beta_0$ and $\lambda \geq \lambda_0.$ 
Here $\lambda_0$ is a constant satisfying $\lambda_0 b^{-\beta} > 1.$
\label{lem 3.2}
\end{Lemma}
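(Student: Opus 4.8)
The plan is to deduce \eqref{29} by combining the two facts already established, namely \eqref{4.14} from Lemma~\ref{lem 3.4} and the bound \eqref{4.15}. Neither one alone suffices: \eqref{4.14} produces the $|v|^2$-term with the correct weight $\lambda^3\beta^4 b^{-3\beta}$ but carries a gradient term with the \emph{wrong} sign, whereas \eqref{4.15} produces a positive gradient term at the price of a negative $|v|^2$-term of lower order. So the strategy is to rescale \eqref{4.15} by a factor comparable to $\lambda\beta b^{-\beta}$, add it to a cleaned-up version of \eqref{4.14}, and verify that all the harmful contributions are absorbed once $\beta$ and $\lambda$ are large.

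First I would simplify \eqref{4.14}. Since $r(\x)>1$ on $\overline\Omega$ and $\beta\ge\beta_0>2$, we have $r^{\beta-2}(\x)\ge1$, so the left-hand side of \eqref{4.14} is bounded above by $(4\lambda\beta b^{-\beta})^{-1}\int_\Omega e^{2\lambda b^{-\beta}r^\beta}|\Delta v|^2\,d\x$. Combining this with \eqref{4.14} and multiplying through by $4\lambda\beta b^{-\beta}$ gives an inequality of the form
\begin{equation*}
	\int_\Omega e^{2\lambda b^{-\beta}r^\beta}|\Delta v|^2\,d\x \;\ge\; C_1\lambda^3\beta^4 b^{-3\beta}\int_\Omega r^{2\beta}e^{2\lambda b^{-\beta}r^\beta}|v|^2\,d\x \;-\; C_2\lambda\beta b^{-\beta}\int_\Omega e^{2\lambda b^{-\beta}r^\beta}|\nabla v|^2\,d\x .
\end{equation*}
Then I would take \eqref{4.15}, multiply it by the positive constant $\theta:=2C_2\lambda\beta b^{-\beta}/C_3$, where $C_3$ is the coefficient of the gradient term in \eqref{4.15}, and estimate its left-hand side $-\theta\int e^{2\lambda b^{-\beta}r^\beta}v\Delta v$ by Young's inequality, $\theta|v||\Delta v|\le\tfrac12|\Delta v|^2+\tfrac{\theta^2}{2}|v|^2$, so that exactly one half of $\int e^{2\lambda b^{-\beta}r^\beta}|\Delta v|^2$ can be moved to the left. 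Using $1\le r^{2\beta}(\x)$ on $\overline\Omega$ to equip the remaining $|v|^2$-terms (of orders $\theta^2\sim\lambda^2\beta^2b^{-2\beta}$ and $\lambda^3\beta^3 b^{-3\beta}$) with the weight $r^{2\beta}$, this produces
\begin{equation*}
	\tfrac12\int_\Omega e^{2\lambda b^{-\beta}r^\beta}|\Delta v|^2\,d\x \;\ge\; 2C_2\lambda\beta b^{-\beta}\int_\Omega e^{2\lambda b^{-\beta}r^\beta}|\nabla v|^2\,d\x \;-\; C\bigl(\lambda^3\beta^3 b^{-3\beta}+\lambda^2\beta^2b^{-2\beta}\bigr)\int_\Omega r^{2\beta}e^{2\lambda b^{-\beta}r^\beta}|v|^2\,d\x .
\end{equation*}

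Adding the two displayed inequalities, the gradient terms combine into $+C_2\lambda\beta b^{-\beta}\int_\Omega e^{2\lambda b^{-\beta}r^\beta}|\nabla v|^2\,d\x$, and the $|v|^2$-coefficient becomes $C_1\lambda^3\beta^4 b^{-3\beta}-C\lambda^3\beta^3 b^{-3\beta}-C\lambda^2\beta^2b^{-2\beta}$; since $\beta^4\gg\beta^3$ for $\beta\ge\beta_0$ and $\lambda\beta^2 b^{-\beta}\gg1$ (recall the standing assumption $\lambda_0 b^{-\beta}\gg1$), for $\beta$ and $\lambda$ large this coefficient is $\ge\frac{C_1}{2}\lambda^3\beta^4 b^{-3\beta}$. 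Dividing the resulting estimate by $3/2$, renaming the constant, and using $\beta\ge\beta^{1/2}$ to weaken $\lambda\beta b^{-\beta}$ to $\lambda\beta^{1/2}b^{-\beta}$ in the gradient term yields exactly \eqref{29}. The only delicate point is the choice of the rescaling factor $\theta\sim\lambda\beta b^{-\beta}$: it must be large enough that the new positive gradient term overpowers the bad one inherited from \eqref{4.14}, yet small enough that the $|v|^2$-term it creates, of size $\theta^2\sim\lambda^2\beta^2 b^{-2\beta}$, stays strictly below the main $|v|^2$-coefficient $\lambda^3\beta^4 b^{-3\beta}$ — and it is precisely here that the assumption $\lambda_0 b^{-\beta}\gg1$ is used.
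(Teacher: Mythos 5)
Your proposal is correct and follows essentially the same route as the paper: both combine the estimate \eqref{4.14} of Lemma \ref{lem 3.4} with \eqref{4.15}, use Young's inequality on $-v\,\Delta v$ to trade the left-hand side for a controllable $|\Delta v|^2$ contribution, and absorb the resulting lower-order $|v|^2$ terms into the dominant $\lambda^3\beta^4 b^{-3\beta}$ term using $r(\x)>1$, $\beta$ large and $\lambda b^{-\beta}\gg 1$. The only difference is bookkeeping: the paper rescales \eqref{4.15} by $\beta^{1/4}$ and keeps the weight $r^{\beta-2}$ in the $|\Delta v|^2$ integrals until the very end, whereas you pass to the unweighted $\int e^{2\lambda b^{-\beta}r^{\beta}}|\Delta v|^2$ first and rescale \eqref{4.15} by a factor of order $\lambda\beta b^{-\beta}$, which changes nothing essential.
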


\begin{proof}
	Multiplying $\beta^{1/4}$ to (\ref{4.15}) and then applying the inequality $-ab \leq a^2/2 + b^2/2$, we have
	\begin{multline*}
		%\hspace{-2cm}
		\int_{\Omega} \lambda \beta^{3/2} b^{-\beta}e^{2\lambda b^{-\beta} r^{\beta}(\x)} r^{\beta - 2}(\x) |v(\x)|^2 d\x
		+ 
		\int_{\Omega} \frac{e^{2\lambda b^{-\beta} r^{\beta}(\x)}}{4\lambda b^{-\beta} \beta r^{\beta - 2}(\x)} |\Delta v(\x)|^2 d\x
		\\
		\geq 
		C \beta^{1/2} \int_{\Omega} e^{2\lambda b^{-\beta} r^{\beta}(\x)} |\nabla v(\x)|^2 d\x
		\\
	- C\lambda^2 \beta^{5/2} b^{-2\beta}\int_{\Omega}  
			 r^{2\beta}(\x) 
		 e^{2\lambda b^{-\beta}r^{\beta}(\x)}|v(\x)|^2d\x.
	\end{multline*}
	Since $r(\x) > 1,$ $\beta^{3/2} r^{\beta - 2}(\x) \ll r^{2\beta}(\x)$, 
	we have
	\begin{multline}
		%\hspace{-2cm}
		\int_{\Omega} \frac{e^{2\lambda b^{-\beta} r^{\beta}(\x)}}{4\lambda \beta b^{-\beta}  r^{\beta - 2}(\x)} |\Delta v(\x)|^2 d\x
		%\nonumber		
		\geq 
		C \beta^{1/2} \int_{\Omega} e^{2\lambda b^{-\beta} r^{\beta}(\x)} |\nabla v(\x)|^2 d\x
		\\
		%		\hspace{4cm}
	- C\lambda^2 \beta^{5/2} b^{-2\beta}\int_{\Omega}  
			 r^{2\beta}(\x) 
		 e^{2\lambda b^{-\beta}r^{\beta}(\x)}|v(\x)|^2d\x.
		 \label{3030}
	\end{multline}
	Here, we have used the fact that $\lambda b^{-\beta} \gg 1.$
	Adding (\ref{3030}) and (\ref{4.14}) together, we obtain
	\begin{multline*}
		%\hspace{-2cm}
		 \int_{\Omega} \frac{e^{2\lambda b^{-\beta} r^{\beta}(\x)}}{\lambda \beta b^{-\beta}  r^{\beta - 2}(\x)} |\Delta v(\x)|^2 d\x
		 \geq C \lambda^2 \beta^3 b^{-2 \beta} \int_{\Omega} r^{2\beta}(\x) e^{2\lambda b^{-\beta} r^{\beta}}|v(\x)|^2 d\x
%		 \nonumber
		 \\
		%\hspace{6cm} 
		+ C \beta^{1/2} \int_{\Omega} e^{2\lambda b^{-\beta} r^{\beta}(\x)} |\nabla v(\x)|^2 d\x,
	\end{multline*}
	which implies (\ref{29}).
\end{proof}

\begin{Lemma}
Let $v$ be the function satisfying all hypotheses of Theorem \ref{Carleman}.
There exist positive constants $\beta_0$ and $\lambda_0$  depending only on $b$, $\x_0$, $\Omega$ and $d$ such that
\begin{multline}
	%\hspace{-2cm}
	\frac{1}{\lambda \beta^{7/4} b^{-\beta}}\int_{\Omega}e^{2\lambda b^{-\beta} r^\beta(\x)}|\Delta v(\x)|^2d\x
\\
	%\nonumber
	\geq 
	\frac{C}{\lambda \beta^{7/4} b^{-\beta}}\sum_{i, j = 1}^d \int_{\Omega}e^{2\lambda b^{-\beta} r^\beta(\x)} r^{2\beta}(\x)|\partial^2_{ x_i x_j}v(\x)|^2 d\x 
	\\
	%\hspace{5cm}
	- C \lambda \beta^{1/4} b^{-\beta} \int_\Omega e^{2\lambda b^{-\beta} r^\beta(\x)} |\nabla v(\x)|^2d\x
	\label{28}
\end{multline}
for all $\beta \geq \beta_0$ and $\lambda \geq \lambda_0.$
\label{lem 3.3}
\end{Lemma}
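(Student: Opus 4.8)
\emph{Proof strategy.} The statement is, in essence, an elliptic–regularity bound carrying the Carleman weight: all second derivatives of $v$ are to be controlled by $\Delta v$ in the weighted $L^2(\Omega)$–sense, at the price of a gradient term whose $(\lambda,\beta,b)$–scaling is precisely the one that will be over‑compensated by the positive gradient term of \eqref{29} when, in Theorem \ref{Carleman}, one adds \eqref{28} and \eqref{29}. The only structural input is that $v=\partial_\nu v=0$ on $\partial\Omega$ forces $\nabla v=0$ on $\partial\Omega$, which is what makes the classical ``Hessian equals Laplacian up to the boundary'' manipulation legitimate.

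First I would record the pointwise identity
\[
(\partial^2_{x_ix_j}v)^2-\partial^2_{x_ix_i}v\,\partial^2_{x_jx_j}v
= \partial_{x_i}\!\big(\partial_{x_j}v\,\partial^2_{x_ix_j}v\big)-\partial_{x_j}\!\big(\partial_{x_j}v\,\partial^2_{x_ix_i}v\big),
\]
multiply it by the weight $\phi(\x):=e^{2\lambda b^{-\beta}r^\beta(\x)}$ (carrying along, if desired, the extra factor $r^{2\beta}(\x)$ that appears in \eqref{28}), sum over $i,j=1,\dots,d$, integrate over $\Omega$, and integrate by parts twice. Because $\nabla v$ vanishes on $\partial\Omega$, every boundary integral that arises contains a factor $\nabla v$ or $\partial_\nu v$ and hence vanishes, so one obtains the weighted identity
\[
\int_\Omega \phi\sum_{i,j=1}^d|\partial^2_{x_ix_j}v|^2\,d\x
= \int_\Omega \phi\,|\Delta v|^2\,d\x + \tfrac12\int_\Omega \Delta\phi\,|\nabla v|^2\,d\x + \int_\Omega (\nabla\phi\cdot\nabla v)\,\Delta v\,d\x .
\]
As in the proof of Lemma \ref{lem 3.4}, I would then compute $\nabla\phi$ and $\Delta\phi$ explicitly from $\nabla r^\beta=\beta r^{\beta-2}\x$; under the standing normalization $\lambda b^{-\beta}\gg1$ and $r(\x)>1$, the dominant part of $\Delta\phi$ is the \emph{nonnegative} quantity of order $\lambda^2\beta^2 b^{-2\beta}r^{2\beta-2}\phi$, whence the middle integral is nonnegative and bounded above by $C\lambda^2\beta^2 b^{-2\beta}\int_\Omega r^{2\beta-2}\phi\,|\nabla v|^2\,d\x$, while $|\nabla\phi|\le C\lambda\beta b^{-\beta}r^{\beta-1}\phi$.

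Next I would handle the cross term — the only place where $\Delta v$ and $\nabla v$ are coupled — by Young's inequality $2ab\le a^2+b^2$ with $a=\sqrt\phi\,|\Delta v|$ and $b=\sqrt\phi\,|\nabla(\lambda b^{-\beta}r^\beta)|\,|\nabla v|$, giving
\[
\Big|\int_\Omega (\nabla\phi\cdot\nabla v)\,\Delta v\,d\x\Big|
\le \int_\Omega \phi\,|\Delta v|^2\,d\x + C\lambda^2\beta^2 b^{-2\beta}\int_\Omega r^{2\beta-2}\phi\,|\nabla v|^2\,d\x .
\]
Feeding the last two bounds into the weighted identity absorbs the cross term and yields
\[
\int_\Omega \phi\sum_{i,j=1}^d|\partial^2_{x_ix_j}v|^2\,d\x
\le 2\int_\Omega \phi\,|\Delta v|^2\,d\x + C\lambda^2\beta^2 b^{-2\beta}\int_\Omega r^{2\beta-2}\phi\,|\nabla v|^2\,d\x .
\]
Dividing through by $\lambda\beta^{7/4}b^{-\beta}$ turns the gradient coefficient into $\lambda^2\beta^2 b^{-2\beta}/(\lambda\beta^{7/4}b^{-\beta})=\lambda\beta^{1/4}b^{-\beta}$ — which is exactly the bookkeeping that dictates the exponent $\beta^{7/4}$ in the statement — and, after disposing of the (bounded on $\overline\Omega$, since $1<r<b$) powers of $r$ into the constants, one arrives at \eqref{28}.

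The heart of the matter, and the only delicate step, is the balancing of the cross term $\int_\Omega(\nabla\phi\cdot\nabla v)\Delta v\,d\x$: the Young splitting must be arranged so that its $|\Delta v|^2$–part comes out with a constant comparable to the one already carried by the identity (so that it can be absorbed rather than ruin the estimate), while its $|\nabla v|^2$–part acquires precisely the weight $\lambda\beta^{1/4}b^{-\beta}$ required for the subsequent cancellation against the term $C\lambda\beta^{1/2}b^{-\beta}\int_\Omega e^{2\lambda b^{-\beta}r^\beta}|\nabla v|^2\,d\x$ of \eqref{29} in the proof of Theorem \ref{Carleman}. A secondary, purely mechanical point is tracking the powers of $r$ produced by differentiating $r^\beta$; these remain harmless because $1<r(\x)<b$ on $\overline\Omega$ and because $\lambda b^{-\beta}\gg1$ ensures the highest power of $\lambda$ dominates at each stage.
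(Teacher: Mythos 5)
Your proposal is correct and follows essentially the same route as the paper's proof: a double integration by parts converting $\int_\Omega e^{2\lambda b^{-\beta}r^\beta}|\Delta v|^2\,d\x$ into the weighted Hessian integral plus cross terms involving $\nabla(e^{2\lambda b^{-\beta}r^\beta})$, which are then absorbed by Young's inequality using $|\nabla(e^{2\lambda b^{-\beta}r^\beta})|\leq C\lambda\beta b^{-\beta}r^{\beta-1}e^{2\lambda b^{-\beta}r^\beta}$, followed by division by $\lambda\beta^{7/4}b^{-\beta}$; your divergence identity and the absorption of the cross term into $\int\phi|\Delta v|^2$ (rather than into the Hessian sum, and via the paper's intermediate third-derivative step under a $C^3$ density argument) is only an organizational variant. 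Your final hand-wave about the powers of $r$ is exactly as coarse as the paper's own last step, so nothing is lost relative to the published argument.
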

\begin{proof}
	By the density arguments, we can assume that $v \in C^3(\overline \Omega).$
	Write $\x = (x_1, \dots, x_d)$. 
	We have
\begin{align*}
%		\hspace{-1cm}
		\int_{\Omega}e^{2\lambda b^{-\beta} r^\beta(\x)}|\Delta v(\x)|^2d\x 
		&= 
		\sum_{i, j = 1}^d\int_{\Omega}e^{2\lambda b^{-\beta} r^\beta(\x)} \partial^2_{x_i x_i} v(\x) \partial^2_{x_j x_j} v(\x) d\x
		\\
		%\hspace{-1cm}
		&=
		\sum_{i, j = 1}^d
			\int_{\Omega}\partial_{x_j}\Big[e^{2\lambda b^{-\beta} r^\beta(\x)} \partial^2_{x_i x_i} v(\x) \partial_{x_j} v(\x) \Big]d\x
\\
	&\hspace{1cm} 
	- \sum_{i, j = 1}^d
			\int_{\Omega} \partial_{x_j} v(\x)\partial_{x_j}\Big[e^{2\lambda b^{-\beta} r^\beta(\x)} \partial^2_{x_i x_i} v(\x)  \Big]d\x. 
\end{align*}
The first  integral in the right hand side above vanishes due to the divergence theorem.
Hence
\begin{multline}
%\hspace{-1cm}
\int_{\Omega}e^{2\lambda b^{-\beta} r^\beta(\x)}|\Delta v(\x)|^2d\x  
%\nonumber
	=
	-\sum_{i, j = 1}^d \int_{\Omega}e^{2\lambda b^{-\beta} r^\beta(\x)} \partial_{x_j} v(\x)  \partial^3_{x_i x_i x_j}v(\x) d\x 
	%\nonumber
	\\
	%\hspace{4.5cm}
	-\sum_{i, j = 1}^d \int_{\Omega}\partial_{x_j}(e^{2\lambda b^{-\beta} r^\beta(\x)}) \partial_{x_j} v(\x)  \partial^2_{x_i x_i}v(\x) d\x.
	\label{3.18}
\end{multline}
	The first term in the right hand side of (\ref{3.18}) is rewritten as
\begin{align*}
	%\hspace{-2cm}
	-\sum_{i, j = 1}^d &\int_{\Omega}e^{2\lambda b^{-\beta} r^\beta(\x)} \partial_{x_j} v(\x)  \partial^3_{x_i x_i x_j}v(\x) d\x 
	\\
	&=
	\sum_{i, j = 1}^d \int_{\Omega}\partial_{x_i}(e^{2\lambda b^{-\beta} r^\beta(\x)} \partial_{x_j} v(\x) ) \partial^2_{ x_i x_j}v(\x) d\x\\
	%\hspace{-1cm}
	&= \sum_{i, j = 1}^d \int_{\Omega}e^{2\lambda b^{-\beta} r^\beta(\x)} |\partial^2_{ x_i x_j}v(\x)|^2 d\x
	\\
	&\hspace{3cm}
	+\sum_{i, j = 1}^d \int_{\Omega}\partial_{x_j} v(\x)\partial_{x_i}(e^{2\lambda b^{-\beta} r^\beta(\x)}  ) \partial^2_{ x_i x_j}v(\x) d\x.
\end{align*}
	Combining this and (\ref{3.18}), we have
\begin{multline*}
	%\hspace{-2cm}
	\int_{\Omega}e^{2\lambda b^{-\beta} r^\beta(\x)}|\Delta v(\x)|^2d\x
	=\sum_{i, j = 1}^d \int_{\Omega}e^{2\lambda  b^{-\beta}r^\beta(\x)} |\partial^2_{ x_i x_j}v(\x)|^2 d\x 
	\\
	%\hspace{-1cm}
	+ \sum_{i, j = 1}^d \int_{\Omega}
	\Big[
		\partial_{x_j} v(\x)\partial_{x_i}(e^{2\lambda b^{-\beta} r^\beta(\x)}  ) \partial^2_{ x_i x_j}v(\x)
		\\
		-\partial_{x_j}(e^{2\lambda b^{-\beta} r^\beta(\x)}) \partial_{x_j} v(\x)  \partial^2_{x_i x_i}v(\x) d\x
	\Big].
\end{multline*}
Hence,
\begin{multline*}
	\int_{\Omega}e^{2\lambda b^{-\beta} r^\beta(\x)}|\Delta v(\x)|^2d\x
	\geq \sum_{i, j = 1}^d \int_{\Omega}e^{2\lambda b^{-\beta} r^\beta(\x)} |\partial^2_{ x_i x_j}v(\x)|^2 d\x 
	\\
	%\hspace{4cm}
	- 2\sum_{i, j = 1}^d \int_{\Omega}
		|\partial_{x_j} v(\x)||\partial_{x_i}(e^{2\lambda b^{-\beta}r^\beta(\x)}  )| \partial^2_{ x_i x_j}v(\x)|d\x.
\end{multline*}
Note that for all $i = 1, \dots, d$,
\[
	\partial_{x_i}(e^{2\lambda b^{-\beta} r^\beta(\x)}) = 2 \lambda b^{-\beta} \beta r^{\beta - 2} e^{2\lambda b^{-\beta} r^\beta(\x)}x_i \quad \mbox{for all } \x \in \Omega.
\]
Using the inequality $ab \leq a^2/2 + b^2/2$, we obtain (\ref{28}).
\end{proof}

We now prove Theorem \ref{Carleman}.

%\noindent{\it Proof of Theorem \ref{Carleman}:} 
\begin{proof}[Proof of Theorem \ref{Carleman}]
Adding  (\ref{29}) and (\ref{28}) together, we obtain
\begin{multline*}
	%\hspace{-2cm}
	(1 + \frac{1}{\lambda^2 \beta^{7/4} b^{-2\beta}})\int_{\Omega} e^{2\lambda b^{-\beta} r^{\beta}(\x)}|\Delta v(\x)|^2 d\x
	\\
	%\hspace{-1.5cm}
	\geq 
	\frac{C}{\lambda \beta^{7/4} b^{-\beta}}\sum_{i, j = 1}^d \int_{\Omega}e^{2\lambda b^{-\beta} r^\beta(\x)} r^{2\beta}(\x)|\partial^2_{ x_i x_j}v(\x)|^2 d\x
	\\
	%\hspace{-1cm}
	+ 	C \lambda^3 \beta^4 b^{-3 \beta} \int_{\Omega} r^{2\beta}(\x) e^{2\lambda b^{-\beta} r^{\beta}}|v(\x)|^2 d\x
	\\
	+ C \lambda \beta^{1/2} b^{-\beta}\int_{\Omega} e^{2\lambda b^{-\beta} r^{\beta}(\x)} |\nabla v(\x)|^2 d\x.	
\end{multline*}
%Theorem \ref{Carleman} has been proved. 
\end{proof}

\begin{Corollary}
	Recall $\beta_0$ and $\lambda_0$ as in Theorem  \ref{Carleman}.
	Fix $\beta = \beta_0$ and let the constant $C$ depend on $\x_0,$ $\Omega,$ $d$ and $\beta$. 
	There exists a constant $\lambda_0$ depending only on $\x_0,$ $\Omega,$ $d$ and $\beta$ such that
  for all function $v \in H^2(\Omega)$ with 
	\[
		v(\x) = \partial_{\nu} v(\x) = 0 
		\quad \mbox{ on } \partial \Omega,
	\] 
	%on $\partial \Omega,$
	%there exists a constant $\lambda_0$  such that
	we have
	\begin{multline}
	%\hspace{-2cm}
	\int_{\Omega} e^{2\lambda b^{-\beta} r^{\beta}(\x)}|\Delta v(\x)|^2 d\x
	%\nonumber
	%\\
	\geq 
	C\lambda^{-1}\sum_{i, j = 1}^d \int_{\Omega}e^{2\lambda b^{-\beta} r^\beta(\x)} |\partial^2_{ x_i x_j}v(\x)|^2 d\x
	\\
	+ 	C \lambda^3 \int_{\Omega}  e^{2\lambda b^{-\beta} r^{\beta}}|v(\x)|^2 d\x
	%\nonumber
	%\\
	%\hspace{6cm}
	+ C \lambda \int_{\Omega} e^{2\lambda b^{-\beta} r^{\beta}(\x)} |\nabla v(\x)|^2 d\x	
	\label{33}
\end{multline}
	for all $\lambda \geq \lambda_0$.
	\label{Col 3.1}
\end{Corollary}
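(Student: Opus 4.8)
The plan is to obtain Corollary \ref{Col 3.1} as a direct bookkeeping consequence of the Carleman estimate \eqref{Car est} in Theorem \ref{Carleman}, by freezing $\beta$ at the value $\beta_0$ and absorbing every resulting $\beta$- and $b$-dependent factor into a single constant $C$. Once $\beta=\beta_0$ is fixed, the quantities $\beta^{7/4}$, $\beta^{4}$, $\beta^{1/2}$, $b^{-\beta}$ and $b^{-3\beta}$ occurring in \eqref{Car est} are all fixed positive numbers depending only on $\x_0$, $\Omega$, $d$ and $\beta$; hence, after relabelling $C$, the three coefficients $\dfrac{C}{\lambda\beta^{7/4}b^{-\beta}}$, $C\lambda^{3}\beta^{4}b^{-3\beta}$ and $C\lambda\beta^{1/2}b^{-\beta}$ in \eqref{Car est} become exactly $C\lambda^{-1}$, $C\lambda^{3}$ and $C\lambda$, respectively.

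The next step is to discard the weight $r^{2\beta}(\x)$ in the first two terms of the right-hand side of \eqref{Car est}. By the choice of $\x_0$, one has $r(\x)=|\x-\x_0|>1$ for all $\x\in\overline\Omega$, so $r^{2\beta}(\x)\ge 1$ there, and therefore, pointwise on $\Omega$,
\[
	\frac{C}{\lambda\beta^{7/4}b^{-\beta}}\,r^{2\beta}(\x)\,|\partial^{2}_{x_{i}x_{j}}v(\x)|^{2}\ \ge\ C\lambda^{-1}|\partial^{2}_{x_{i}x_{j}}v(\x)|^{2},
	\qquad
	C\lambda^{3}\beta^{4}b^{-3\beta}\,r^{2\beta}(\x)|v(\x)|^{2}\ \ge\ C\lambda^{3}|v(\x)|^{2},
\]
while the gradient term already carries no $r^{2\beta}$ factor. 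Substituting these bounds into \eqref{Car est} yields \eqref{33} for every $v\in C^{2}(\overline\Omega)$ with $v=\partial_{\nu}v=0$ on $\partial\Omega$, for all $\lambda\ge\lambda_{0}$; one may need to enlarge $\lambda_{0}$ slightly so that the constraint $\lambda_{0}b^{-\beta}\gg 1$ inherited from the proofs of Lemma \ref{lem 3.4} and Lemma \ref{lem 3.3} still holds, but with $\beta$ now fixed this is merely a lower bound on $\lambda_{0}$ depending only on $\x_0,\Omega,d,\beta$.

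The final step is a density argument to pass from $C^{2}(\overline\Omega)$ to $H^{2}(\Omega)$. The admissible class in the corollary, namely $\{v\in H^{2}(\Omega):v=\partial_{\nu}v=0 \text{ on }\partial\Omega\}$, coincides (since $\partial\Omega$ is smooth) with $H^{2}_{0}(\Omega)$, the closure of $C_{c}^{\infty}(\Omega)$ in the $H^{2}(\Omega)$-norm, and each $\varphi\in C_{c}^{\infty}(\Omega)\subset C^{2}(\overline\Omega)$ satisfies the required boundary conditions. Given $v\in H^{2}_{0}(\Omega)$, choose $v_{k}\in C_{c}^{\infty}(\Omega)$ with $v_{k}\to v$ in $H^{2}(\Omega)$, write \eqref{33} for each $v_{k}$, and let $k\to\infty$: because $r$ is bounded on $\overline\Omega$, the Carleman weight $e^{2\lambda b^{-\beta}r^{\beta}(\x)}$ is bounded above and below by positive constants there, so every integral in \eqref{33} is a continuous functional of $(v,\nabla v, D^{2}v)$ in $L^{2}(\Omega)$ and therefore passes to the limit. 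This establishes \eqref{33} for all $v\in H^{2}(\Omega)$ with $v=\partial_{\nu}v=0$ on $\partial\Omega$.

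I do not expect a genuine obstacle in this argument: it is essentially constant-chasing. The only two points that warrant a little care are the bookkeeping of the $\beta$- and $b$-dependent factors (which is precisely why the statement permits $C$ and $\lambda_{0}$ to depend on $\beta$), and the verification that all terms in \eqref{33} are continuous with respect to the $H^{2}(\Omega)$-norm so that the density argument closes; both are routine once one notes that $r$, and hence the weight $e^{2\lambda b^{-\beta}r^{\beta}}$, is bounded on $\overline\Omega$.
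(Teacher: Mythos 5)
Your proposal is correct and is essentially the derivation the paper intends (the corollary is stated as an immediate consequence of Theorem \ref{Carleman}): fix $\beta=\beta_0$, absorb all $\beta$- and $b$-dependent factors into $C$, discard $r^{2\beta}\ge 1$ using $r>1$ on $\overline\Omega$, and pass from $C^2(\overline\Omega)$ to the $H^2(\Omega)$ class with vanishing Dirichlet and Neumann traces by density, the weight being bounded on $\overline\Omega$. No gaps.
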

    
    \begin{remark}
    Although there are many versions of the Carleman estimate available, those versions are either too complicated, not suitable for us to prove Theorem \ref{minimizer} and Theorem \ref{main thm}, or not work in computations. 
    The main ideas of the proof follow from \cite{BeilinaKlibanovBook, BukhgeimKlibanov:smd1981, Klibanov:jiipp2013, Protter:1960AMS, MinhLoc:tams2015}.
    \end{remark}
    
    \begin{remark}
    	The presence of the second derivatives on the right-hand side of (\ref{33}) is a new feature of our Carleman estimate.
	The presence of those second derivatives allows us to prove the existence and uniqueness of the minimizers of the cost functionals in Section \ref{sec solve nonlinear system}.
    \end{remark}

\section{The convergence analysis} \label{sec convergence}

In this section, we prove a theorem that guarantees that the sequence of vector-valued functions, proposed in Section \ref{sec solve nonlinear system}, converges to the true solution to \eqref{system U}--\eqref{boundary conditions}.
This convergence implies that Algorithm \ref{alg} rigorously provides good numerical solutions to Problem \ref{ISP}.

	\begin{Theorem}		
		Assume that problem (\ref{system U})--(\ref{boundary conditions}) has a unique solution $(u_m^*)_{m = 1}^N$. 
		Then, there is a constant $\lambda$ depending only on $\Omega$, $T$, $d$ and $N$
		such that		
	\begin{equation}
	%	\hspace{-1cm}
		  \sum_{m = 1}^N \Big\| e^{\lambda b^{-\beta} r^\beta(\x)}(u_m^{(k)} - u_m^*)\Big\|^2_{L^2(\Omega)} 
		 % \\
		\leq
		\Big[\frac{C}{ \lambda^3}\Big]^{k - 1} \sum_{m = 1}^N\Big\| e^{\lambda b^{-\beta} r^\beta(\x)} (u_m^{(1)} - u_m^*) \Big\|_{L^2(\Omega)}^2 	
		\label{main est}
		\end{equation}
	for $k = 1, 2, \dots$ where $C$ is a constant depending only on $\Omega$, $T$, $M$, $d$, $N$ and $\|q\|_{C^1(\overline \Omega)}$. 
	In particular, if $\lambda$ is large enough such that $0 < C/\lambda^3 < 1,$  $\{u_m^{(k)}\}_{m = 1}^N$ converges to $u_m^{*}$ exponentially. 
	Moreover, with such a $\lambda$, the sequence $(p^{(k)})_{k \geq 1}$ obtained in Step \ref{Step 8} of Algorithm \ref{alg} converges to the true function $p^* = u^*(\x, 0)$ given by \eqref{Fourier u} with $t = 0$. 
	\label{main thm}
	\end{Theorem}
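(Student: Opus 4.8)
The plan is to estimate the difference $w_m^{(k)} := u_m^{(k)} - u_m^*$ between the $k$-th iterate and the true solution, and show it contracts by a factor $C/\lambda^3$ at each step. First I would subtract the equation satisfied by the true solution $(u_m^*)_{m=1}^N$ from the variational identity characterizing the minimizer $(u_m^{(k)})_{m=1}^N$ of $J^{(k)}$. Since $(u_m^*)$ satisfies \eqref{system U} exactly, it satisfies the Euler--Lagrange equation associated with $J^{(k)}$ up to the discrepancy in the nonlinear term, namely $q_m(P(u_1^{(k-1)}),\dots,P(u_N^{(k-1)})) - q_m(u_1^*,\dots,u_N^*)$. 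Because $u_m^{(k)} - u_m^*$ and $u_m^* - u_m^*$ both lie in $H_0$ (both have zero Cauchy data on $\partial\Omega$, as they share the same boundary data $f_m, g_m$), the function $w_m^{(k)}$ is an admissible test function. Plugging $h_m = w_m^{(k)}$ into the difference of variational identities yields an identity of the form
\begin{multline*}
	\sum_{m=1}^N \int_\Omega e^{2\lambda b^{-\beta} r^\beta(\x)} \Big|\Delta w_m^{(k)} - c(\x)\sum_{n=1}^N s_{mn} w_n^{(k)}\Big|^2 d\x
	\\
	= -\sum_{m=1}^N \int_\Omega e^{2\lambda b^{-\beta} r^\beta(\x)}\big[q_m(P(u_\cdot^{(k-1)})) - q_m(u_\cdot^*)\big]\Big(\Delta w_m^{(k)} - c(\x)\sum_{n=1}^N s_{mn} w_n^{(k)}\Big) d\x.
\end{multline*}

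Second, I would bound the left-hand side from below using the Carleman estimate, Corollary \ref{Col 3.1}: after expanding the square via $(x-y)^2 \geq x^2/2 - y^2$ and absorbing the $c(\x)\sum s_{mn} w_n$ term (finite $c$, finite $s_{mn}$, $N$ finite) into the $C\lambda^3$-term of \eqref{33} by taking $\lambda$ large, the left side dominates $C\lambda^3 \sum_m \|e^{\lambda b^{-\beta} r^\beta} w_m^{(k)}\|_{L^2(\Omega)}^2$ plus lower-order second-derivative and gradient terms. Third, I would bound the right-hand side: by Cauchy--Schwarz with the weight split evenly, it is at most $\tfrac12 \sum_m \int_\Omega e^{2\lambda b^{-\beta} r^\beta}|\Delta w_m^{(k)} - c\sum s_{mn} w_n^{(k)}|^2 + \tfrac12 \sum_m \int_\Omega e^{2\lambda b^{-\beta} r^\beta}|q_m(P(u_\cdot^{(k-1)})) - q_m(u_\cdot^*)|^2$. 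The first half is absorbed into the left side. For the second half I use that $P$ is $1$-Lipschitz, $q \in C^1$ with $|u^*| \leq M$ so the cut-off keeps arguments in the range where the data-generating solution lives, hence $q_m$ is Lipschitz in its arguments with constant depending only on $\|q\|_{C^1}$, $T$, $N$; thus $|q_m(P(u_\cdot^{(k-1)})) - q_m(u_\cdot^*)| \leq C \sum_n |P(u_n^{(k-1)}) - u_n^*| \leq C\sum_n |u_n^{(k-1)} - u_n^*|$ (using that $P(u_n^*) = u_n^*$ and $P$ nonexpansive). This gives $\sum_m \int_\Omega e^{2\lambda b^{-\beta} r^\beta}|q_m(\cdots) - q_m(\cdots)|^2 \leq C \sum_n \|e^{\lambda b^{-\beta} r^\beta} w_n^{(k-1)}\|_{L^2(\Omega)}^2$.

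Combining the three estimates, I obtain $C\lambda^3 \sum_m \|e^{\lambda b^{-\beta} r^\beta} w_m^{(k)}\|_{L^2(\Omega)}^2 \leq C \sum_m \|e^{\lambda b^{-\beta} r^\beta} w_m^{(k-1)}\|_{L^2(\Omega)}^2$, i.e. the contraction factor $C/\lambda^3$; iterating from $k$ down to $1$ yields \eqref{main est}. Taking $\lambda$ large enough that $C/\lambda^3 < 1$ gives exponential convergence in the weighted $L^2$ norm, which is equivalent to the ordinary $L^2(\Omega)$ norm since $e^{2\lambda b^{-\beta} r^\beta}$ is bounded above and below by positive constants on $\overline\Omega$. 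Finally, the convergence of $p^{(k)} = \sum_{m=1}^N u_m^{(k)}(\x)\Psi_m(0)$ to $p^* = \sum_{m=1}^N u_m^*(\x)\Psi_m(0)$ follows from the triangle inequality and the $L^2$ convergence of each $u_m^{(k)}$, using that $N$ is finite and the $\Psi_m(0)$ are fixed scalars. I expect the main obstacle to be the bookkeeping in the first step — carefully justifying that the true solution satisfies the Euler--Lagrange identity for $J^{(k)}$ up to exactly the nonlinear discrepancy, and that $w_m^{(k)} \in H_0$ — together with ensuring the Lipschitz bound on $q_m$ genuinely only needs the cut-off range $[-M\sqrt{T}, M\sqrt{T}]$ (so that $C$ depends only on the listed quantities and not on the iterates), which is precisely why the cut-off $P$ was introduced.
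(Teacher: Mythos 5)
Your proposal is correct and follows essentially the same route as the paper's proof: the variational identity for the minimizer of $J^{(k)}$ minus the identity satisfied by the true solution, tested with $w_m^{(k)} = u_m^{(k)} - u_m^*\in H_0$, the Carleman estimate of Corollary \ref{Col 3.1} to produce the $C\lambda^3$ lower bound (absorbing the $c\sum s_{mn}$ term for large $\lambda$), and the Lipschitz bound on $q_m$ via the cut-off $P$ with $P(u_n^*)=u_n^*$ to control the nonlinear discrepancy by $\sum_n\|e^{\lambda b^{-\beta}r^\beta}(u_n^{(k-1)}-u_n^*)\|_{L^2(\Omega)}^2$. The only cosmetic difference is that you absorb half of the residual on the right via Young's inequality, whereas the paper uses Cauchy--Schwarz and cancels one factor of the residual norm; both yield the same contraction factor $C/\lambda^3$ and the same induction.
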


	\begin{proof}

	In the proof, $C$ is a generous constant  that might change from estimate to estimate.

\noindent {\bf Step 1.}{\it\, Establish a priori bound.}
		Recall $H_0$ as in (\ref{H0}).	
		Since $(u_1^{(k)}, \dots, u_N^{(k)})$ is the minimizer of $J^{(k)}$, by the variational principle, for all $h \in H_0$
		\begin{multline}
	%	\hspace{-2cm}
			\sum_{m = 1}^N
			\Big\langle  e^{\lambda b^{-\beta} r^\beta(\x)}\Big[ \Delta u^{(k)} - c(\x)\sum_{n = 1}^N s_{mn} u^{(k)} + q_m(P(u_1^{(k-1)}), \dots, P(u_N^{(k-1)}))\Big], 	
	%		\nonumber
			\\	
	%		\hspace{4cm}	
			e^{\lambda b^{-\beta} r^\beta(\x)}\Big[\Delta h_m - c(\x)\sum_{n = 1}^Ns_{mn} h_m\Big] \Big\rangle_{L^2(\Omega)} 
			= 0.
			\label{3.6}
		\end{multline}
		On the other hand, since $(u_1^*, \dots, u_N^*)$ solves (\ref{system U})--(\ref{boundary conditions}),
		\begin{multline}
		%\hspace{-2cm}
			\sum_{m = 1}^N
			\Big\langle e^{\lambda b^{-\beta} r^\beta(\x)}\Big[ \Delta u^* - c(\x) \sum_{n = 1}^N s_{mn} u^* + q_m(u_1^*, \dots, u_N^*)\Big], 
		%	\nonumber
			\\
		%	\hspace{4cm}
			e^{\lambda b^{-\beta} r^\beta(\x)}\Big[\Delta h_m - c(\x)\sum_{n = 1}^Ns_{mn} h_m\Big] \Big\rangle_{L^2(\Omega)} 
= 0.
			\label{3.7}
		\end{multline}
		It follows from (\ref{3.6}) and (\ref{3.7}) that
		\begin{multline}
		%\hspace{-2cm}
			\sum_{m = 1}^N
			\Big\langle e^{\lambda b^{-\beta} r^\beta(\x)}\Big[\Delta  (u^{(k)} - u^*) - c(\x)\sum_{n = 1}^N s_{mn} (u^{(k)} - u^*)
		%	\nonumber
			\\ 
		%	 \hspace{-1cm}
			+ q_m(P(u_1^{(k-1)}), \dots, P(u_N^{(k-1)}))) - q_m(u_1^*, \dots, u_N^*)\Big],
		%	\nonumber
			\\
		%	\hspace{4cm}
			e^{\lambda b^{-\beta} r^\beta(\x)}\Big[\Delta h_m - c(\x)\sum_{n = 1}^Ns_{mn} h_m \Big]
			\Big\rangle_{L^2(\Omega)} 	
			= 0.
			\label{3.8}
		\end{multline}
Using the test function $h_m = u_m^{(k)} - u^*_m$, $m = 1, \dots, N$, in (\ref{3.8}) and using H\"older's inequality, we have
\begin{multline}
%\hspace{-2cm}
			\sum_{m = 1}^N
			\Big\| e^{\lambda b^{-\beta} r^\beta(\x)}\Big[\Delta  (u^{(k)} - u^*) - c(\x)\sum_{n = 1}^N s_{mn} (u^{(k)} - u^*)\Big]\Big\|^2_{L^2(\Omega)} 
%			\nonumber
			\\
			\leq
			 \sum_{m = 1}^N  \Big\| e^{\lambda b^{-\beta} r^\beta(\x)}\Big[q_m(P(u_1^{(k-1)}), \dots, P(u_N^{(k-1)}))) - q_m(u_1^*, \dots, u_N^*)\Big]\Big\|_{L^2(\Omega)}
%			 \nonumber
			\\
%			\hspace{1cm}
			\times
			\Big\|e^{\lambda b^{-\beta} r^\beta(\x)}\Big[\Delta (u_m^{(k)} - u^*_m) - c(\x)\sum_{n = 1}^Ns_{mn} (u_m^{(k)} - u^*_m) \Big]
			\Big\|_{L^2(\Omega)}	.
			\label{3.7777}
		\end{multline}
	Using the inequality $\sum_{m = 1}^N a_mb_m \leq (\sum_{m = 1}^N a^2_m)^{1/2}( \sum_{m = 1}^N b^2_m)^{1/2}$ for the right hand side of (\ref{3.7777}) and simplying the resulting, we get
	\begin{multline}
	%\hspace{-2cm}
	\sum_{m = 1}^N
			\Big\| e^{\lambda b^{-\beta} r^\beta(\x)}\Big[\Delta  (u_m^{(k)} - u_m^*) - c(\x)\sum_{n = 1}^N s_{mn} (u_m^{(k)} - u_m^*)\Big]\Big\|_{L^2(\Omega)}^2
	%		\nonumber
			\\
	%		\hspace{-1cm}
			\leq
	\sum_{m = 1}^N \Big\| e^{\lambda b^{-\beta} r^\beta(\x)}\Big[q_m(P(u_1^{(k-1)}), \dots, P(u_N^{(k-1)})))
	%\\
	 - q_m(u_1^*, \dots, u_N^*)\Big]\Big\|_{L^2(\Omega)}^2.
	\label{3.9}
	\end{multline}
	
\noindent{\bf Step 2.}{\it\,  Estimate the right hand side of (\ref{3.9}).} 	
	Since $\|u^*(\x, t)\|_{L^{\infty}} \leq M$, we have
	\begin{align*}
	%\hspace{-1cm}
		|u_m^*(\x)| &= \Big|\int_0^T u^*(\x, t) \Psi_m(t) dt\Big|
		\leq \|u^*(\x, t)\|_{L^2(0, T)} \|\|\Psi_m(t)\|_{L^2(0, T)} 
		\\
		&= \Big(\int_0^T |u^*(\x, t)|^2 dx\Big)^{1/2}
		\leq
		 M \sqrt{T} 
	\end{align*}
	for $m = 1, \dots, N.$
	Therefore,
	\[
		\Big|q_m(P(u_1^{(k-1)}), \dots, P(u_N^{(k-1)})) - q_m(u_1^*, \dots, u_N^*)\Big|
		 \leq A_m \sum_{n = 1}^N\big|u_n^{(k-1)} - u_n^* \big|
	\]
	where 
	\[
		A_m = \max\Big\{|\nabla q_m(s_1, \dots, s_N)|: |s_i| \leq M\sqrt{T}, i = 1, \dots, N\Big\} 
		\quad m = 1, \dots, N.
	\]
	Set $A = \sum_{m = 1}^N A_m$.
	The right hand side of (\ref{3.9}) is bounded from above by
	\begin{align}
		%\hspace{-2cm}
		\sum_{m = 1}^N \Big\| e^{\lambda b^{-\beta} r^\beta(\x)} &\Big[q_m(P(u_1^{(k-1)}), \dots, P(u_N^{(k-1)}))) - q_m(u_1^*, \dots, u_N^*)\Big]\Big\|_{L^2(\Omega)}^2	
		\nonumber	
		\\
		%\hspace{-2cm}
		&\leq
		A \sum_{m = 1}^N\Big\| e^{\lambda b^{-\beta} r^\beta(\x)}\big|P(u_m^{(k - 1)}) - u_m^* \big|\Big\|_{L^2(\Omega)}^2	
		\nonumber
		\\
		&\leq
		 A \sum_{m = 1}^N\Big\| e^{\lambda b^{-\beta} r^\beta(\x)}\big|u_m^{(k - 1)} - u_m^* \big|\Big\|_{L^2(\Omega)}^2. 
	\label{3.10}
	\end{align}
Combining (\ref{3.9}) and (\ref{3.10}) gives
	\begin{multline}
	\sum_{m = 1}^N
			\Big\| e^{\lambda b^{-\beta} r^\beta(\x)}\Big[\Delta  (u_m^{(k)} - u_m^*) - c(\x)\sum_{n = 1}^N s_{mn} (u_m^{(k)} - u_m^*)\Big]\Big\|^2_{L^2(\Omega)}
			%\nonumber
			\\
			%\hspace{5.5cm}
			\leq			
	 A \sum_{m = 1}^N\Big\| e^{\lambda b^{-\beta} r^\beta(\x)}\big|u_m^{(k-1)} - u_m^* \big|\Big\|_{L^2(\Omega)}^2. 
	 \label{3.1212}
	\end{multline}
	
\noindent {\bf Step 3.} {\it Estimate the left hand side of (\ref{3.1212}).}	
	Using the inequality $(a - b)^2 \geq a^2/2 - 2 b^2,$ we have
	\begin{multline}
%	\hspace{-2cm}
	\sum_{m = 1}^N
			\Big\| e^{\lambda b^{-\beta} r^\beta(\x)}\Big[\Delta  (u_m^{(k)} - u_m^*) - c(\x)\sum_{n = 1}^N s_{mn} (u_n^{(k)} - u_n^*)\Big]\Big\|^2_{L^2(\Omega)}
%			\nonumber 
	%\\
	%\hspace{-2.5cm}
	\geq
	\sum_{m = 1}^N\frac{1}{2}\Big\| e^{\lambda b^{-\beta} r^\beta(\x)}\Delta  (u_m^{(k)} - u_m^*)\Big\|^2_{L^2(\Omega)}
	%\nonumber
	\\
%	\hspace{3.5cm}
			-2 \sum_{m = 1}^N \Big\| e^{\lambda b^{-\beta} r^\beta(\x)}c(\x)\sum_{n = 1}^N s_{mn} (u_n^{(k)} - u_n^*)\Big\|^2_{L^2(\Omega)}.								
	\label{3.11}
	\end{multline}
	Applying Carleman estimate in Corollary \ref{Col 3.1},  for the function $u_m^{(k)} - u^*$, $m = 1, \dots, N$, we estimate
	\begin{equation}
	\sum_{m = 1}^N\frac{1}{2}\Big\| e^{\lambda b^{-\beta} r^\beta(\x)}\Delta  (u_m^{(k)} - u_m^*)\Big\|^2_{L^2(\Omega)}
	\\
	\geq
	C \lambda^3  \sum_{m = 1}^N \Big\| e^{\lambda b^{-\beta} r^\beta(\x)}(u_m^{(k)} - u_m^*)\Big\|^2_{L^2(\Omega)}.	 
		\label{3.12}
	\end{equation}
	Fix $\lambda \geq \lambda_0$ where $\lambda_0$ is as in Corollary \ref{Col 3.1}.
	It follows from (\ref{3.11}) and (\ref{3.12}) that
	\begin{multline}
	%\hspace{-2cm}
	\sum_{m = 1}^N\frac{1}{2}\Big\| e^{\lambda b^{-\beta} r^\beta(\x)}\Delta  (u_m^{(k)} - u_m^*)\Big\|^2_{L^2(\Omega)}
	%\nonumber
	%\\
			-2 \sum_{m = 1}^N \Big\| e^{\lambda b^{-\beta} r^\beta(\x)}\sum_{n = 1}^N s_{mn} (u_n^{(k )} - u_n^*)\Big\|^2_{L^2(\Omega)}
			\\
	%		\hspace{5.5cm}
	\geq
	C \lambda^3  \sum_{m = 1}^N \Big\| e^{\lambda b^{-\beta}r^\beta(\x)}(u_m^{(k)} - u_m^*)\Big\|^2_{L^2(\Omega)}.
	\label{3.13}
	\end{multline}
	Combining (\ref{3.9}), (\ref{3.10}) and (\ref{3.13}) gives
	\[
		  \sum_{m = 1}^N \Big\| e^{\lambda b^{-\beta} r^\beta(\x)}(u_m^{(k)} - u_m^*)\Big\|^2_{L^2(\Omega)}
		\leq
		\frac{A}{C \lambda^3} \sum_{m = 1}^N\Big\| e^{\lambda b^{-\beta} r^\beta(\x)}(u_m^{(k - 1)} - u_m^*) \big|\Big\|_{L^2(\Omega)}^2. 
	\]
	By induction, 
	we have
	\begin{equation*}
	%\hspace{-1cm}
		  \sum_{m = 1}^N \Big\| e^{\lambda b^{-\beta} r^\beta(\x)}(u_m^{(k)} - u_m^*)\Big\|^2_{L^2(\Omega)}
		  \\
		\leq
		\Big[\frac{A}{C \lambda^{3}}\Big]^{k - 1} \sum_{m = 1}^N\Big\| e^{\lambda b^{-\beta} r^\beta(\x)}(u_m^{(1)} - u_m^*) \big|\Big\|_{L^2(\Omega)}^2. 
	\end{equation*}
	Replacing $A/C$ by the generous constant $C$, we have proved the estimate \eqref{main est}.  
	The convergence of $p^{(k)}$ to $p^*$ as $k \to \infty$ is obvious.
\end{proof}
	
\begin{remark}
	The technique of  using the Carleman estimate to prove Theorem \ref{main thm} is similar to the one in \cite{BAUDOUIN:SIAMNumAna:2017} in which a coefficient inverse problem for hyperbolic equations was considered. 
	We also find that this technique is applicable to solve an inverse source problem for nonlinear parabolic equations \cite{Boulakia:preprint2019} from the boundary and additional internal measurements. 
\end{remark}

\begin{remark}
	The convergence of $\{p^{(k)}\}_{k \geq 1}$ to the true solution to the inverse problem in Theorem \ref{main thm} is numerically confirmed in Section \ref{sec num}. See also Figures \ref{fig m1 error}--\ref{fig m4 error}.
\end{remark}

\section{Numerical implementation} \label{sec num}

For simplicity, we solve the inverse problem in the case $d = 2$. 

\subsection{The forward problem}
We solve the forward problem of Problem \ref{ISP} as follows.
Let $R_1 > R > 0$ be two positive numbers. 
Define the domains
\[
	\Omega_1 = (-R_1, R_1)^2 
	\quad \mbox{and } 
	\quad \Omega = (-R, R)^2.
\]
We approximate (\ref{main eqn})defined on  $\R^d \times (0, T)$ by the following problem defined on $\Omega_1 \times (0, T)$
\begin{equation}
	\left\{
		\begin{array}{rcll}
			c(\x)u_t(\x, t) &=& \Delta u(\x, t) + q(u(\x, t)) &\x \in \Omega_1, t \in (0, T),\\
			u(\x,0) &=& p(\x) & \x \in \Omega_1,\\
			u(\x, t) &=& 0 & \x \in \partial \Omega_1, t \in [0, T].
		\end{array}
	\right.	
\label{main eqn in G}
\end{equation}
In our numerical tests, 
 the function $c$ is given by
 \begin{equation*}
 	%\hspace{-1cm}
		c(x, y) = 1 + 1/30 
	\Big[3(1-3x)^2e^{-9x^2 - (3y+1)^2}  
	\\
%	\hspace{2cm}
   - 10(3x/5 - 27x^3 - 243y^5)e^{-9x^2-9y^2}  
   - 1/3e^{-(3x+1)^2 - 9y^2}\Big] 
   \label{ctrue}
 \end{equation*}
 for $\x = (x, y) \in \Omega.$
 %This function is taken as 
 %the function ``peaks" of Matlab. 
 The range of $c$ is $[0.8, 1.25]$, which is not a perturbation of the constant function $1$.
%Its graph is displayed in Figure \ref{fig c}.
% \begin{figure}[h!]
% 	\begin{center}
%		\includegraphics[width = 0.3\textwidth]{ctrue}
%		\caption{\label{fig c} The graph of the function $c(\x)$ in all of our numerical tests}
%	\end{center}
% \end{figure}

We solve (\ref{main eqn in G}) by the finite difference method using the explicit scheme. 
The data $f(\x, t) = u(\x, t)$ and $g(\x, t) = \partial_{\nu} u(\x, t)$ on $\partial \Omega \times [0, T]$ can be extracted easily.

In the next subsection, we discuss our choice of $\{\Psi_n\}_{n \geq 1}$ and the number $N$ in Section \ref{sec 2.1} and the truncation in \eqref{Fourier u}.

\subsection{A special orthonormal basis $\{\Psi_n\}_{n \geq 1}$ of $L^2(0, T)$ and the choice of the cut-off number $N$} \label{basis MK}

We will employ a special basis of $L^2(0, T)$. 
For each $n = 1, 2, \dots,$ set $\phi_n(t) = (t-T/2)^{n - 1}\exp(t-T/2)$. 
The set $\{\phi_n\}_{n = 1}^{\infty}$ is complete in $L^2(0, T).$
Applying the Gram-Schmidt orthonormalization process to this set, we obtain a basis of $L^2(0, T)$, named as $\{\Psi_n\}_{n = 1}^{\infty}$.
%We have the proposition
%\begin{proposition}[see \cite{Klibanov:jiip2017}]
%The basis $\{\Psi_n\}_{n = 1}^{\infty}$ satisfies the following properties:
%\begin{enumerate}
%	\item $\Psi_n'$ is not identically zero for all $n \geq 1$,
%	\item For all $m, n \geq 1$
%	\[
%		s_{mn} = \int_{0}^{T} \Psi_n'(t) \Psi_m(t) dt
%		= \left\{
%			\begin{array}{ll}
%				1 & \mbox{if } m = n,\\
%				0 & \mbox{if } n < m.
%			\end{array}
%		\right.
%%	\label{smn}
%	\]
%	As a result, for all integer $N > 1$, the matrix $S = (s_{mn})_{m, n = 1}^N$, is invertible.
%\end{enumerate}
%\label{prop MK}
%\end{proposition}
This basis was originally introduced to solve the electrical impedance tomography problem with partial data in \cite{Klibanov:jiip2017}.
Since then, this basis was widely used to solve a variety of inverse problems.
For instance, in \cite{Nguyens:jiip2019}, we employ this basis to solve an inverse source problem and a coefficient inverse problem for linear parabolic equations; 
in \cite{NguyenLiKlibanov:IPI2019}, this special basis was used to solve an inverse source problem for elliptic equations;
in \cite{KlibanovNguyen:ip2019}, we solve the problem of finding Radon inverse with incomplete data;
in \cite{KlibanovAlexeyNguyen:SISC2019}, we solve an inverse source problem for the full transport radiative equation.
The most related paper with the current one is \cite{LiNguyen:IPSE2019}, in which the second author and his collaborator employed this basis to recover the initial condition for linear parabolic equations.

We next discuss the choice of $N$ in \eqref{Fourier u}. 
Fix a positive integer $N_\x$.
On $\overline \Omega = [-R, R]^2,$ we arrange an $N_\x \times N_\x$ uniform grid
\[
	\mathcal{G} = \Big\{(x_i, y_j): x_i = -R + (i - 1)h, y_j = -R + (j - 1)h, 1 \leq i, j \leq N_\x\Big\}
\]
where $h = 2R/(N_\x - 1)$ is the step size. 
In our computations, we set $R_1 = 6$, $R = 1$, $T = 1.5$ and $N_\x = 80$.
To solve Problem \ref{ISP}, we need to compute the discrete values of the function $u$ on the grid $\mathcal{G}.$
%\subsection{Finding the cut-off number $N$}

\begin{figure}[h!]
	\begin{center}
		\subfloat[$N = 15$]{\includegraphics[width=0.3\textwidth]{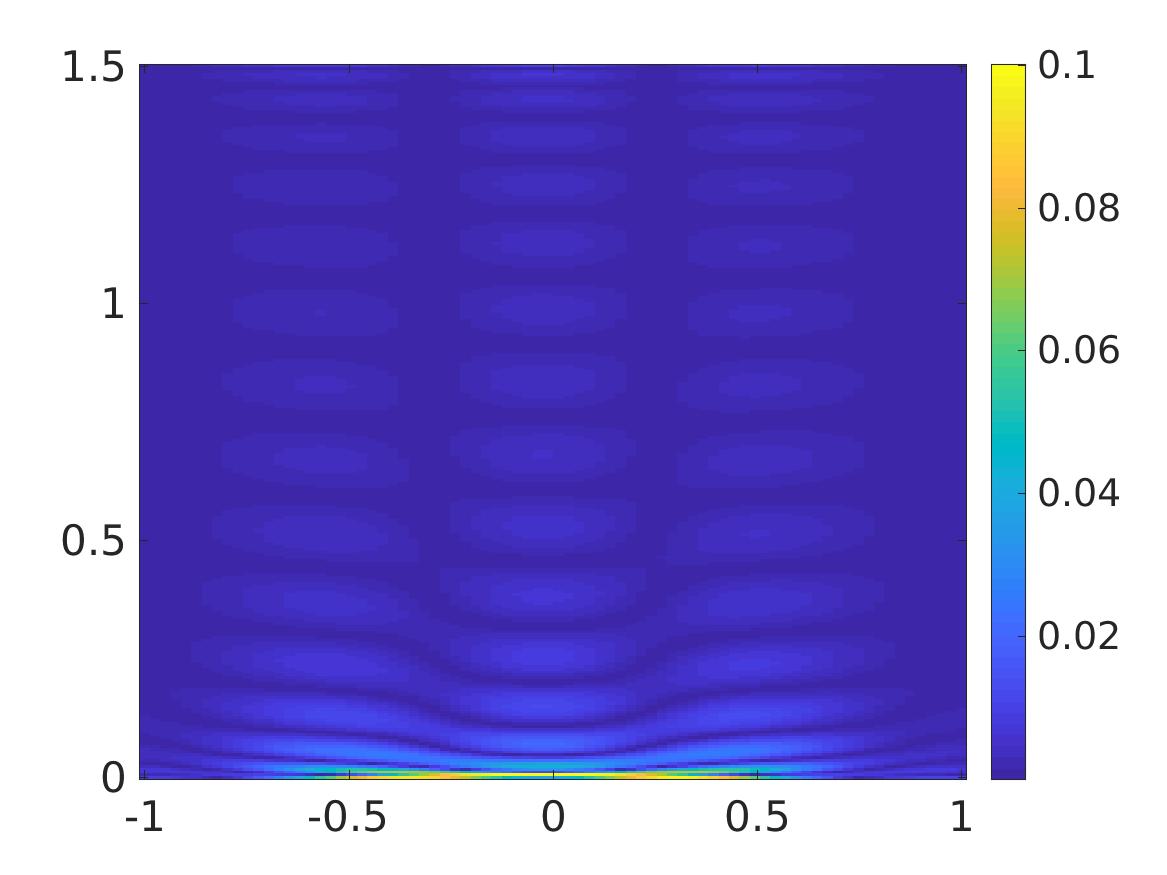}} \hfill
		\subfloat[$N = 25$]{\includegraphics[width=0.3\textwidth]{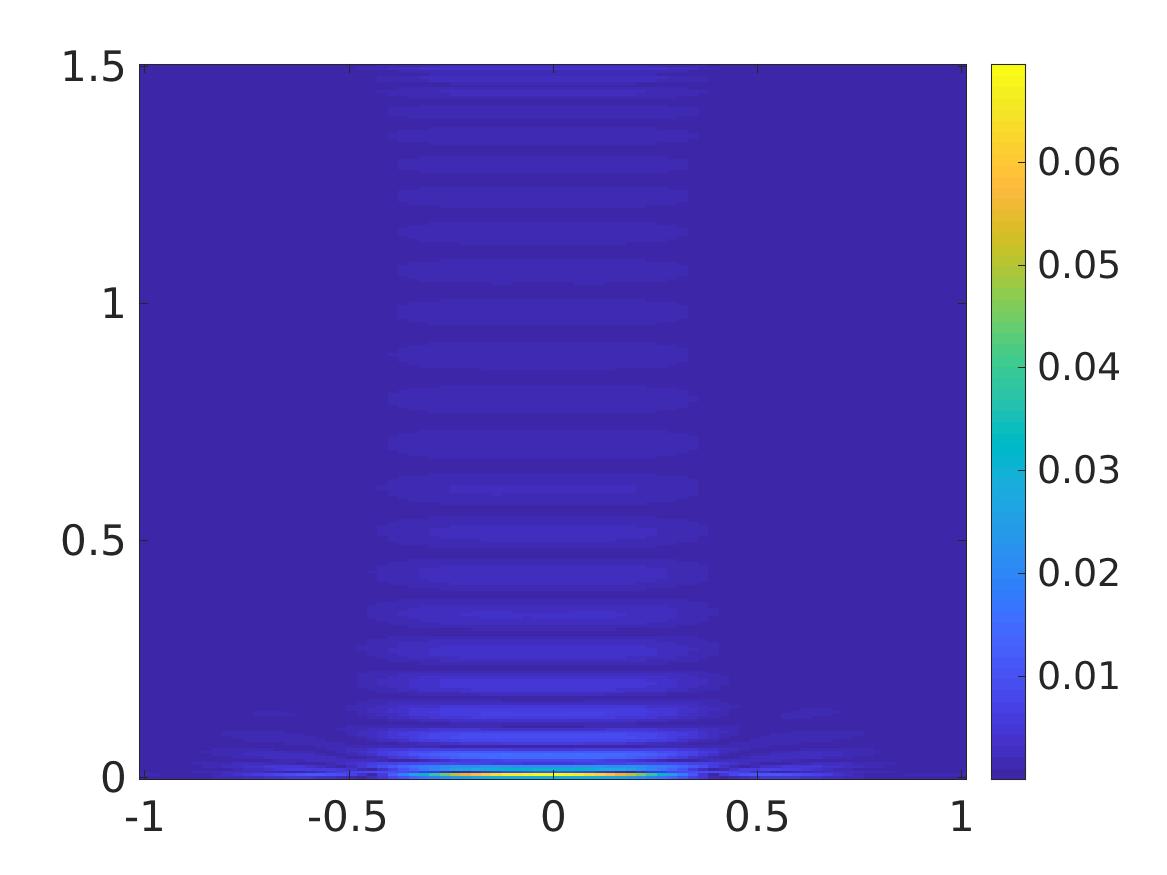}} \hfill
		\subfloat[$N = 35$]{\includegraphics[width=0.3\textwidth]{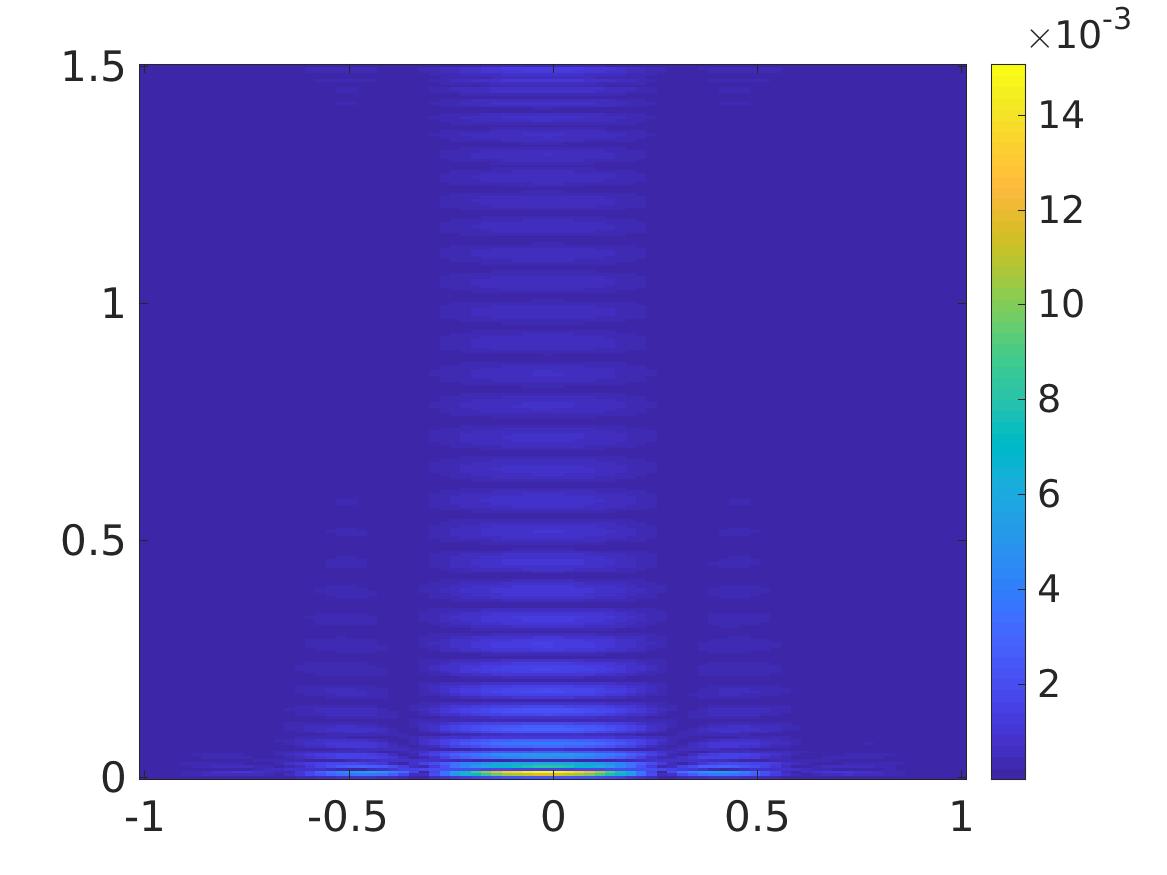}}
		
		\subfloat[$N = 15$]{\includegraphics[width=0.3\textwidth]{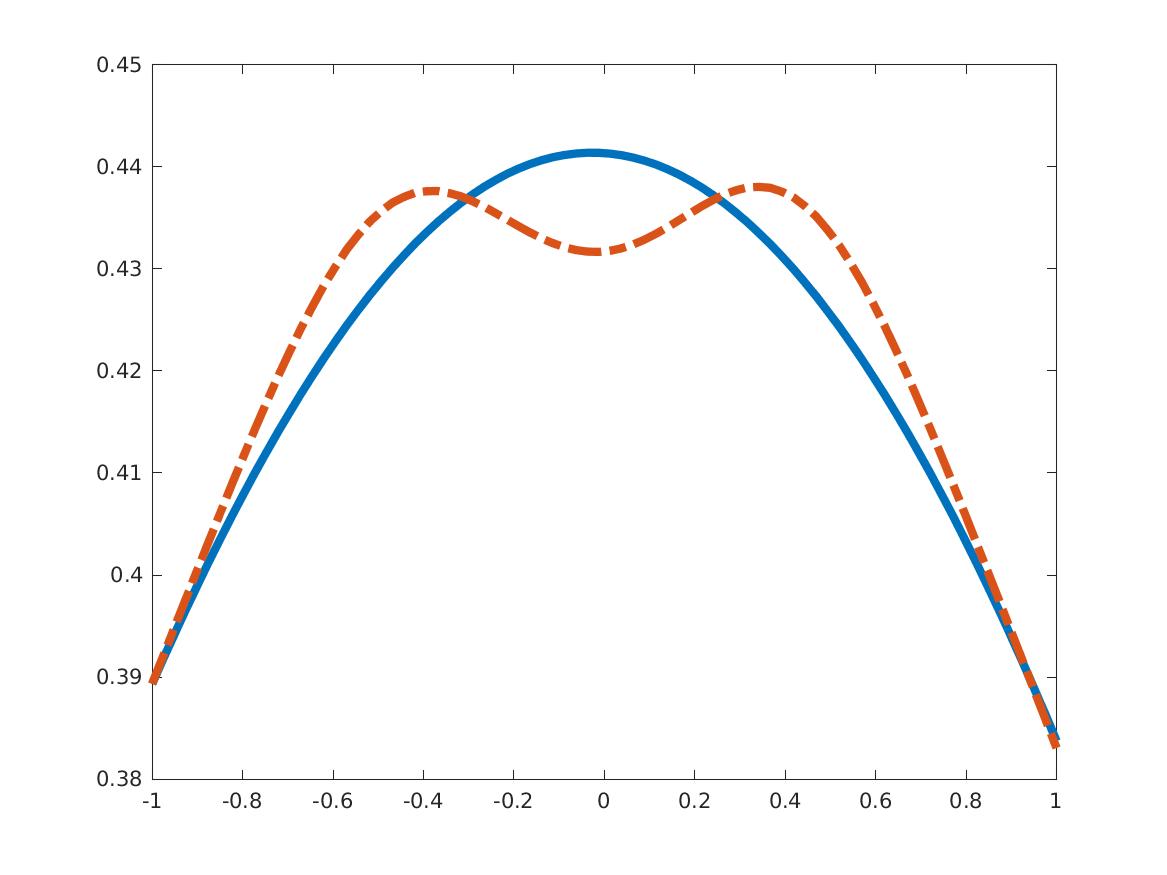}} \hfill
		\subfloat[$N = 25$]{\includegraphics[width=0.3\textwidth]{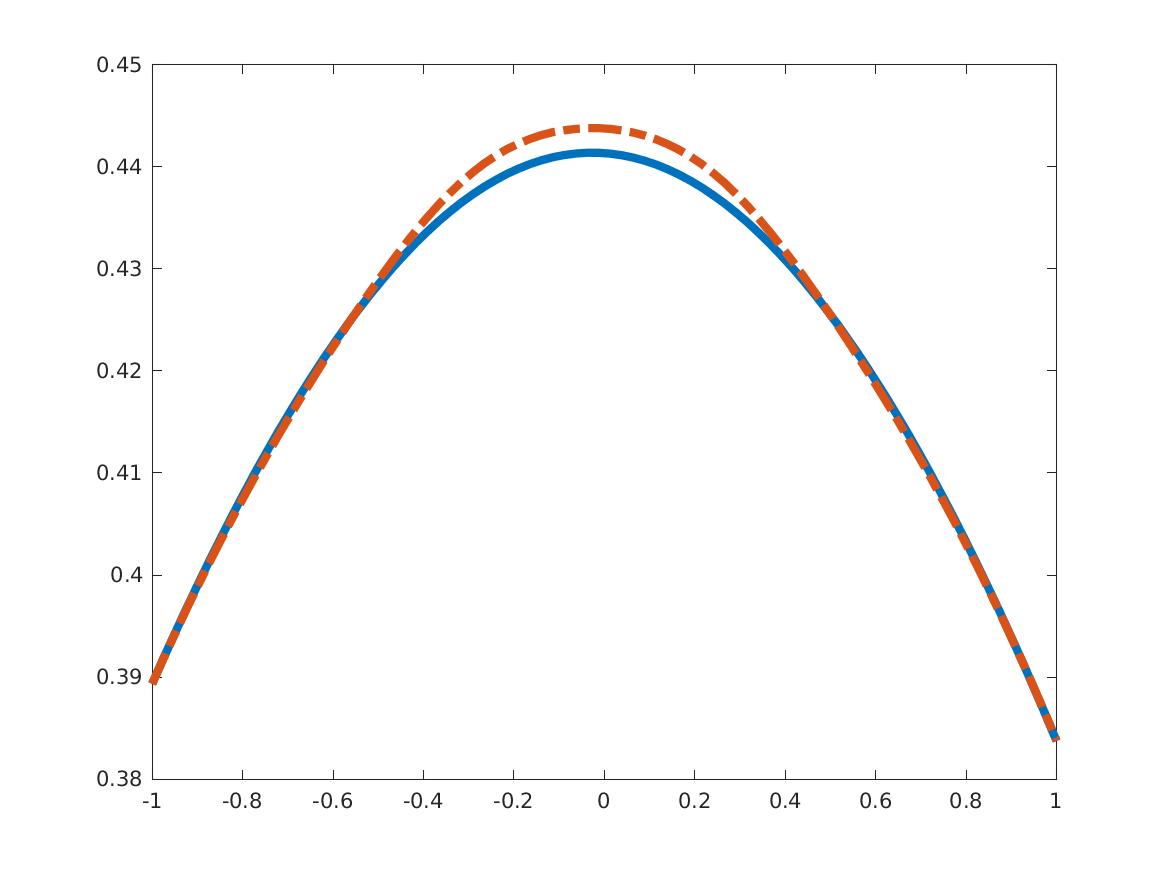}} \hfill
		\subfloat[$N = 35$]{\includegraphics[width=0.3\textwidth]{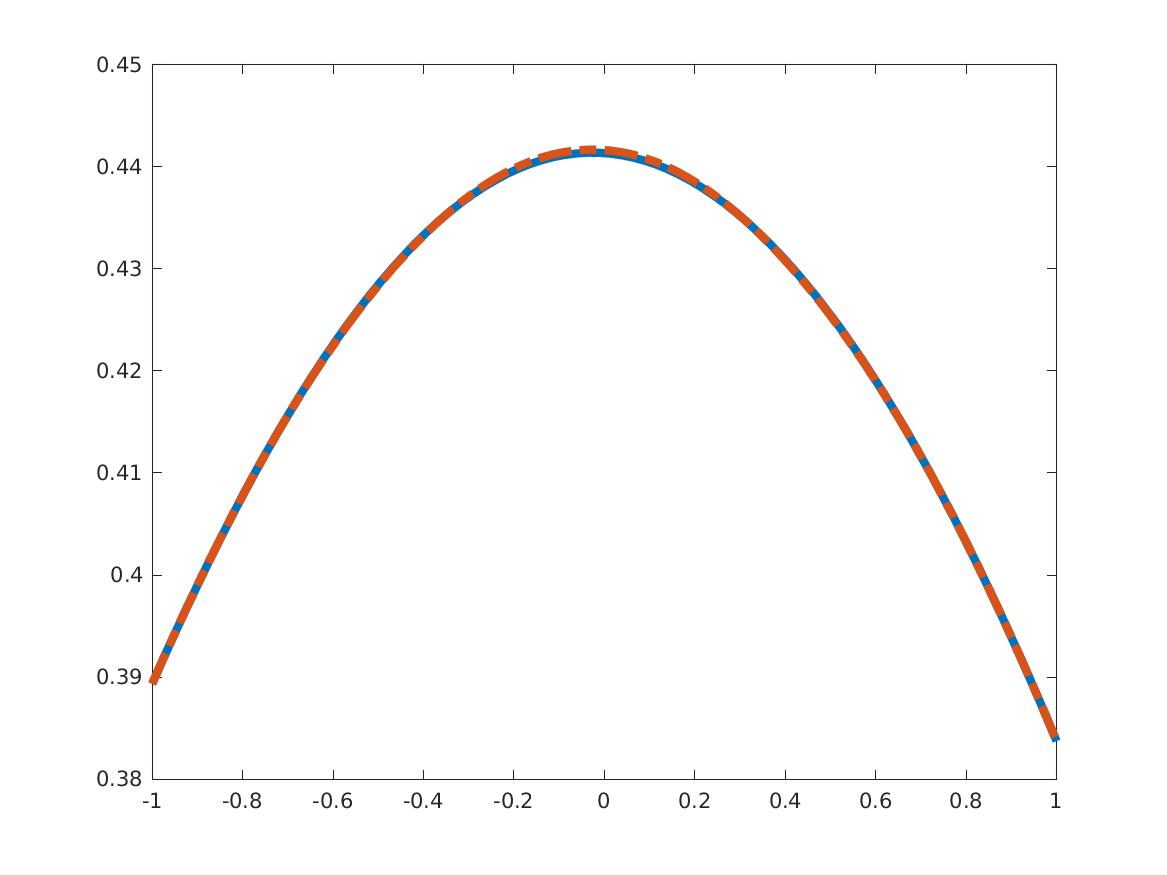}}
		\caption{\label{fig choose N} 
		The comparison of $f(x, R, t)$ and its partial Fourier sum $\sum_{n = 1}^N f_m(x, y = R, t)$ on $\{(x, y = R) \in \partial \Omega\}.$
		The first row displays the graphs of the absolute differences of $f(x, R, t)$ and $\sum_{n = 1}^N f_n(x,  R) \Psi_n(t)$. 
		The horizontal axis indicates $x$ and the vertical axis indicates $t$.
		It is evident that the bigger $N$, the smaller the difference is.		
			The second row shows the true data $f(x, y = R, T)$ (solid line) and its approximation $\sum_{n = 1}^N f_n(x, y =  R) \Psi_n(T)$ (dash--dot line).
			We observe that when $N = 35$, the two curves coincide.
%		It is evident that
%		the error $|f(x, y = R, t) - \sum_{n = 1}^N f_n(x,  y = R) \Psi(t)|$ is acceptably small when $N = 35.$
}
	\end{center}
\end{figure} 
The first step in our method is to find an appropriate cut off number $N$. 
We do so as follows.
Take the data on $\{(x, y = R) \in \partial \Omega\}$, which is the top part of $\partial \Omega$, $f(x, y = R, t) =  u_{\rm true}(x,  y = R, t)$ in Test 1 in Section \ref{sec num example}. 
Then, we compare the function $f(x, R, t)$ and the function $\sum_{n = 1}^N f_n(x, y= R) \Psi_n(t)$ where $f_n(x, y=R)$ is computed by (\ref{boundary conditions}). 
Choose $N$ such that the function \[e_N(\x, t) = \Big|f(x, y = R, t) - \sum_{n = 1}^N f_n(x,  y = R) \Psi_n(t)\Big|\] is small enough.
We use the same number $N$ for all numerical tests. 
In this paper, $N = 35,$ see Figure \ref{fig choose N} for an illustration.

\begin{remark}
	In our computations, when the cut-off number $N$ is $ 15$ or $25$, the  quality of the numerical results is poor. 
	When $N = 35$, we obtain good numerical results. 
	Increasing $N > 35$ does not improve the computed quality. 
	\label{rem N}
\end{remark}
\begin{remark}
In this numerical section, we choose the Carleman weight function  $e^{\lambda b^{-\beta} |\x - \x_0|^{\beta}}$ when defining $J^{(k)}$, $k \geq 0$, where $\lambda = 40$ and $\beta = 10$. The point $\x_0$ is $(0, 1.5)$ and $b = 5$. 
This and the condition $\lambda b^{-\beta}$ large conflict. 
However, in practice, the Carleman weight function with these values of $\lambda$ and $\beta$ already helps provide good numerical solutions to Problem \ref{ISP}. 
We numerically observe that the weight function blow-up when $\lambda b^{-\beta} \gg 1$, causing some unnecessary numerical difficulties.
\end{remark}

We next present the key step in the implementation of the inverse problem.
\subsection{Computing the vector-valued function $(u_m)_{m = 1}^N$}

Recall that $(u_m^{(0)}(x, y))_{m = 1}^N$ minimizes $J^{(0)}$ on $H$. Similarly to the argument in the first step of the proof of Theorem \ref{main thm}, for all $h \in H_0$, see the definition of $H_0$ in (\ref{H0}), by the variational principle, we have
\begin{equation}	
%\hspace{-2.5cm}			
			\sum_{m = 1}^N
			\Big\langle  e^{\lambda b^{-\beta} r^{\beta}(\x)}\Big[ \Delta u_m^{(0)} - c(\x)\sum_{n = 1}^N s_{mn} u_m^{(0)} \Big], 
	%		\nonumber
		%	\\		
	%		\hspace{4cm}	
			e^{\lambda b^{-\beta} r^{\beta}(\x)}\Big[ \Delta h_m - c(\x)\sum_{n = 1}^Ns_{mn} h_m\Big] \Big\rangle_{L^2(\Omega)} 
			= 0.
		\label{5.2}
		\end{equation}
For any $u \in H$,
we next associate the values of $u_m$ $\{u_m(x_i, y_j): 1 \leq m \leq N, 1 \leq i, j \leq N_\x\}$ with an $N_\x^2 N$ dimensional vector $\frak{u}_\frak{i}$
with 
\begin{equation}
	\frak{u}_\frak{i} = u_m(x_i, y_j)
	\label{u lineup}
\end{equation}
where
\begin{equation}
	\frak{i} = (i - 1)N_\x N + (j - 1) N + m
	\quad \mbox{for all }  1 \leq i, j \leq N_\x, 1 \leq m \leq N.
	\label{lineup}
\end{equation}
The range of the index $\frak{i}$ is $\{1, \dots, N_\x^2 N\}.$ 
The ``line-up" finite difference form of (\ref{5.2}) is 
\begin{equation}
	\langle  (\mathcal L - \mathcal S) \frak{u}^{(0)},  (\mathcal L - \mathcal S) \frak{h} \rangle = 0
	\label{5.5}
\end{equation}
where $\frak{W}_\lambda^2$, $\frak{u}^{(0)}$ and $\frak{h}$ are the line-up versions of  $(W_\lambda^2)_{m = 1}^N$, $(u_m^{0})_{m = 1}^N$ and $(h_m)_{m = 1}^N$ respectively.
Here, $\langle \cdot, \cdot \rangle$ is the classical Euclidian inner product. 
In (\ref{5.5}) 
\begin{enumerate}
\item the $N_\x^2 N \times N_\x^2 N$ matrix $\mathcal L$ is defined as  
\begin{enumerate}
	\item $(\mathcal L)_{\frak{i} \frak{i}} = -\frac{4 e^{\lambda b^{-\beta} r^{\beta}(x_i, y_j)}}{d_\x^2}$ for $\frak{i}$ as in (\ref{lineup}) for $2 \leq i, j \leq N_\x - 1$, $1 \leq m \leq N$;
	\item $(\mathcal L)_{\frak{i} \frak{j}} = \frac{e^{\lambda b^{-\beta} r^{\beta}(x_i, y_j)}}{d_\x^2}$ for $\frak{j} =  (i \pm 1 - 1)N_\x N + (j - 1) N + m$ and $\frak{j} =  (i  - 1)N_\x N + (j \pm 1 - 1) N + m$; for $2 \leq i, j \leq N_\x - 1$, $1 \leq m \leq N$;
	\item the other entries are $0$.
\end{enumerate}
\item the $N_\x^2 N \times N_\x^2 N$ matrix $\mathcal S$ is defined  as
		 $(\mathcal S)_{\frak{i}\frak{j}} =  e^{\lambda b^{-\beta} r^{\beta}(x_i, y_j)}c(x_i, y_j) s_{m n}$ for $\frak{i}$ as in (\ref{lineup}) and $\frak{j} =  (i  - 1)N_\x N + (j \pm 1 - 1) N + n$ for $2 \leq i, j \leq N_\x - 1$, $1 \leq m, n \leq N$.	 
		 The other entries are $0$.
\end{enumerate}
%It follows from (\ref{5.5}) that
%\[
%	\langle \frak{W}_\lambda^2 (\mathcal L - \mathcal S)^T(\mathcal L - \mathcal S) \frak{u}^{(0)},   \frak{h} \rangle = 0
%\]
%for all $\frak{h}$. 
%Hence, 
%\begin{equation}
%	(\mathcal L - \mathcal S)^T(\mathcal L - \mathcal S) \frak{u}^{(0)} = 0.
%	\label{5.6}
%\end{equation}
On the other hand, since $(u^{0}_m)_{m = 1}^N$ satisfies the boundary constraints (\ref{boundary conditions}), we have
\begin{equation}
	\mathcal D \frak{u}^{(0)} = \frak{f} \quad
	\mbox{and }
	\quad \mathcal N \frak{u}^{(0)} = \frak{g}
	\label{5.7}
\end{equation}
where
\begin{enumerate}
\item The $N_\x^2 N \times N_\x^2 N$ matrix $\mathcal D$ is  defined as $\mathcal D_{\frak{i}\frak{i}} = 1$ for $\frak{i}$ as in (\ref{lineup}), $i \in \{1, N_\x\},$ $1 \leq j \leq N_\x$ or $2 \leq i \leq N_\x - 1$, $j \in \{1, N_\x\}.$
 The other entries are $0$.
 \item The $N_\x^2 N \times N_\x^2 N$ matrix $\mathcal N$ is  defined as 
 	\begin{enumerate}
 		\item $\mathcal N_{\frak{i}\frak{i}} = \frac{1}{d_\x}$ for $\frak{i}$ as in (\ref{lineup}), $i \in \{1, N_\x\},$ $1 \leq j \leq N_\x$ or $2 \leq i \leq N_\x - 1$, $j \in \{1, N_\x\}$, $1 \leq m \leq N;$
		\item $\mathcal N_{\frak{i}\frak{j}} = -\frac{1}{d_\x}$ for $\frak{i}$ as in (\ref{lineup}) and $\frak{j} = (i + 1- 1)N_\x N + (j - 1) N + m$, $i = 1,$ $1 \leq j \leq N_\x$,   $1 \leq m \leq N;$
		\item $\mathcal N_{\frak{i}\frak{j}} = -\frac{1}{d_\x}$ for $\frak{i}$ as in (\ref{lineup}) and $\frak{j} = (i - 1- 1)N_\x N + (j - 1) N + m$, $i = N_\x,$ $1 \leq j \leq N_\x$,   $1 \leq m \leq N;$
		\item $\mathcal N_{\frak{i}\frak{j}} = -\frac{1}{d_\x}$ for $\frak{i}$ as in (\ref{lineup}) and $\frak{j} = (i - 1)N_\x N + (j + 1 - 1) N + m$,  $1 \leq i \leq N_\x$, $j = 1$,   $1 \leq m \leq N;$
		\item $\mathcal N_{\frak{i}\frak{j}} = -\frac{1}{d_\x}$ for $\frak{i}$ as in (\ref{lineup}) and $\frak{j} = (i - 1)N_\x N + (j - 1 - 1) N + m$,  $2 \leq i \leq N_\x -1 $,  $j = N_\x,$ $1 \leq m \leq N;$
		\item The other entries are $0$.
 	\end{enumerate}
	\item The $N_\x^2 N$ dimensional vector $\frak{f}$ is defined as $\frak{f}_{\frak{i}} = f_m(x_i, y_j)$ for $\frak{i}$ as in (\ref{lineup}), $i \in \{1, N_\x\},$ $1 \leq j \leq N_\x$, $1 \leq m \leq N$ or $2 \leq i \leq N_\x - 1$, $j \in \{1, N_\x\}.$
	\item The $N_\x^2 N$ dimensional vector $\frak{g}$ is defined as $\frak{g}_{\frak{i}} = g_m(x_i, y_j)$ for $\frak{i}$ as in (\ref{lineup}), $i \in \{1, N_\x\},$ $1 \leq j \leq N_\x$, $1 \leq m \leq N$ or $2 \leq i \leq N_\x - 1$, $j \in \{1, N_\x\}.$
\end{enumerate}
Solving (\ref{5.5})--(\ref{5.7}) by the least square method with the command ``lsqlin" built in Matlab, we obtain the vector $\frak{u}^{(0)}$ and hence the initial solution $(u_m^{(0)}(x_i, y_j))_{m = 1}^N$ for $1 \leq i, j \leq N_\x$.

\begin{remark}
	In computation, defining the matrices above is ineffective due to their large size, $N_\x^2 N \times N_\x^2 N$ where $N_\x = 80$ and $N = 35$.	
	We note that most of those matrices' entries are 0.
	So, instead of defining dense matrices, we use the invention of sparse matrices. 
	Moreover, using sparse matrices significantly reduces the computational time.
\end{remark}

We next compute the vector valued function $(u_m^{(k)})_{m = 1}^N$, $k \geq 1$, assuming by induction that $(u_m^{(k-1)})_{m = 1}^N$ is known.
Applying a very similar argument when deriving (\ref{5.5})--(\ref{5.7}), the vector $\frak{u}^{(k)}$ the line up version of $(u_m^{(k)}(x_i, y_j))_{m = 1}^N$ with $1 \leq i, j \leq N_\x$ satisfies the equations
\begin{equation}
	(\mathcal L - \mathcal S)^T(\mathcal L - \mathcal S) \frak{u}^{(k)} = -(\mathcal L - \mathcal S)^T\frak{q}^{(k-1)}.
	\label{5.8}
\end{equation}
and
\begin{equation}
	\mathcal D \frak{u}^{(k)} = \frak{f} \quad
	\mbox{and }
	\quad \mathcal N \frak{u}^{(k)} = \frak{g}
	\label{5.9}
\end{equation}
where $\frak{q}^{(k-1)}$ is the line up version of $(q_m(u_1^{(k - 1)}(x, y), \dots, u_N^{(k - 1)}(x, y)))_{m = 1}^N$.
To find $\frak{u}^{(k)}$, we solve (\ref{5.8})--(\ref{5.9}) by the least square method with the command ``lsqlin" of Matlab. 
The value of the function $(u_m(x_i, y_j))_{m = 1}^N$ follows.
We next find $u(x, y, t)$ via (\ref{Fourier u}). 
The desired solution to Problem \ref{ISP} $p(x, y)$ is set to be $u(x, y, 0)$.

\begin{remark}
	In theory, we need to apply the cut-off function $P$, see (\ref{cutoff function}). This is only for our convenience to prove Theorem \ref{main thm}.
	However, in computation, we can obtain good numerical results without applying the cut-off technique. 
	This can be explained by setting $M$ sufficiently large.
\end{remark}

We summarize the procedure to find $p$ in Algorithm \ref{alg}.

\begin{algorithm}[h!]
\caption{\label{alg}The procedure to solve Problem \ref{ISP}}
	\begin{algorithmic}[1]
	\State\, 
	Compute $\{\Psi_n\}_{n = 1}^N$ as in Section \ref{basis MK}.
	Choose $N = 35$, see Figure \ref{fig choose N} and Remark \ref{rem N}.
	\State\, Compute matrices $\mathcal{L}, \mathcal S, \mathcal D$ and $\mathcal N.$ Find the line up versions $\frak{f}$ and $\frak{g}$ of the data $f_m(x_i, y_j)$ and $g_m(x_i, y_j)$ for $(x_i, y_j) \in \mathcal G \cap \partial \Omega$, $1 \leq m \leq N$.
	\State\, \label{Step 3} Solve (\ref{5.5})--(\ref{5.7}) by the least square method. The solution is denoted by $\frak{u}^{(0)}$.
	Compute $u^{(0)}_m(x_i, y_j)$, $1 \leq i, j \leq N_\x$, $1 \leq m \leq N$ using
	$
		u^{(0)}_m(x_i, y_j) = (\frak{u}^{(0)})_{\frak{i}}
	$ with $\frak{i}$ as in (\ref{lineup}).	
	\State\, Set the initial solution $p^{(0)} = \sum_{n = 1}^N u^{(0)}_n(x_i, y_j)\Psi_n(0).$
	\For{$k = 1$ to $5$}
		\State Find $\frak{q}^{(k-1)}$, the line up version of $q(P(u_1^{(k-1)}(x_i, y_j)), \dots, u_N^{(k-1)}(x_i, y_j)))$, $1 \leq i, j \leq N_\x$, $1 \leq m \leq N$ in the same manner of (\ref{u lineup}) and (\ref{lineup}).
		\State\, Solve (\ref{5.8})--(\ref{5.9}) by the least square method. The solution is denoted by $\frak{u}^{(k)}$.
	Compute $u^{(k)}_m(x_i, y_j)$, $1 \leq i, j \leq N_\x$, $1 \leq m \leq N$ using
	$
		u^{(k)}_m(x_i, y_j) = (\frak{u}^{(k)})_{\frak{i}}
	$ with $\frak{i}$ as in (\ref{lineup}).	
	\State\, \label{Step 8} Set the initial solution $p^{(k)} = \sum_{n = 1}^N u^{(k)}_n(x_i, y_j)\Psi_n(0).$
	\State\, \label{error estimate} Define the recursive error at step $k$ as $\|p^{(k)} - p^{(k - 1)}\|_{L^\infty}(\Omega)$.
	\EndFor 
\end{algorithmic}
\end{algorithm}

\begin{remark}
	We numerically observe that $\|p^{(5)} - p^{(4)}\|_\infty$ is sufficiently small in all tests in Section \ref{sec num example}; 
	i.e., our iterative scheme converges fast. 
	Iterating the loop in Algorithm \ref{alg} five (5) times is enough to obtain good numerical results.
	Therefore, we stop the iterative process when $k = 5.$
	%Theorem \ref{main thm} is numerically confirmed.
\end{remark}

\subsection{Numerical examples} \label{sec num example}

In this section, we show four (4) numerical results.

\noindent{\bf Test 1.}
The true source function is given by
\[
	p_{\rm true} = \left\{
		\begin{array}{ll}
			8 & x^2 + (y - 0.3)^2 < 0.45^2,\\
			0 & \mbox{otherwise.}
		\end{array}
	\right.
\]
The  nonlinearity $q$ is given by 
\[
	q(s) = s(1 - s) \quad s \in \R.
\]
In this case, the parabolic equation in (\ref{main eqn}) is the Fisher equation.
The true and computed source functions $p$ are displayed in Figure \ref{example 1}.
It appears in the graph of this source function a big inclusion with contrast $8$.

\begin{figure}[h!]
	\begin{flushleft}
		\subfloat[]{\includegraphics[width = 0.3\textwidth]{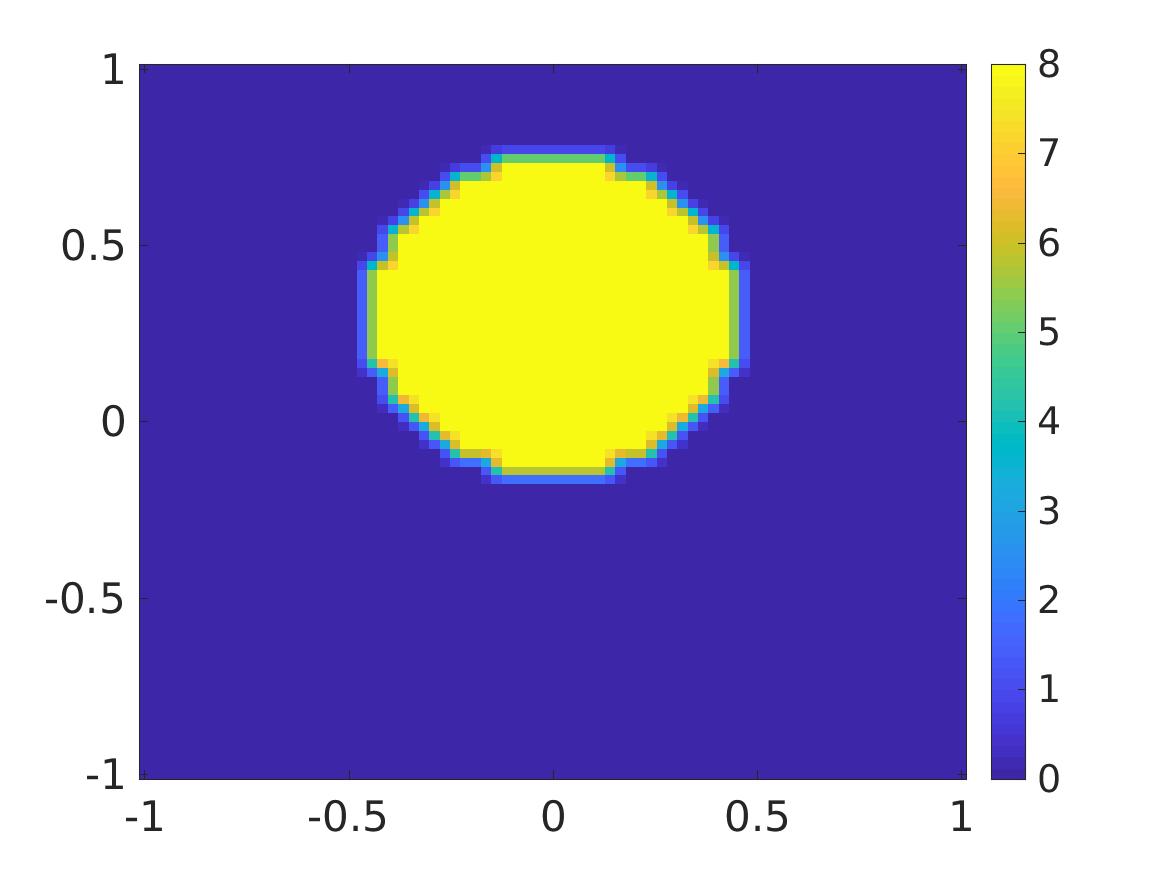}}
		\quad
		\subfloat[\label{m1 init20}]{\includegraphics[width = 0.3\textwidth]{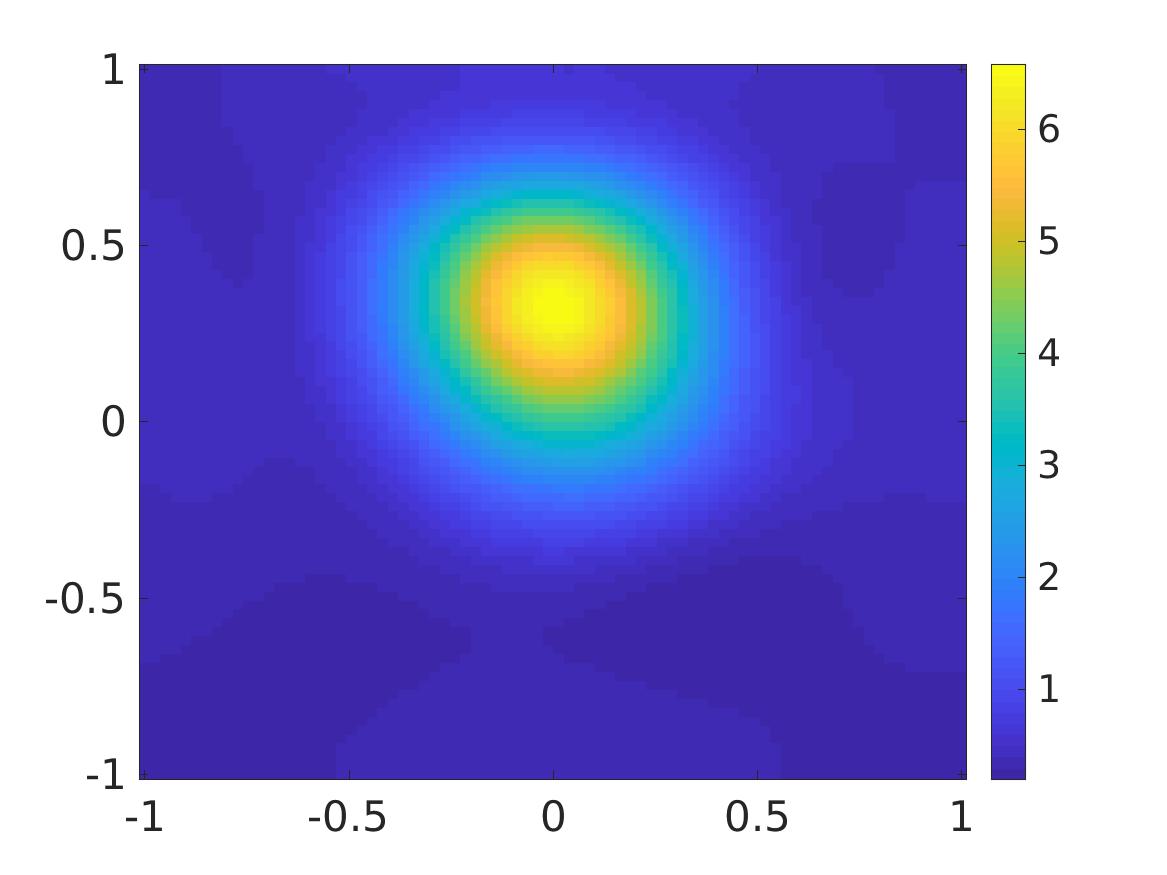}} ~ \quad

		\subfloat[\label{m1 p comp}]{\includegraphics[width = 0.3\textwidth]{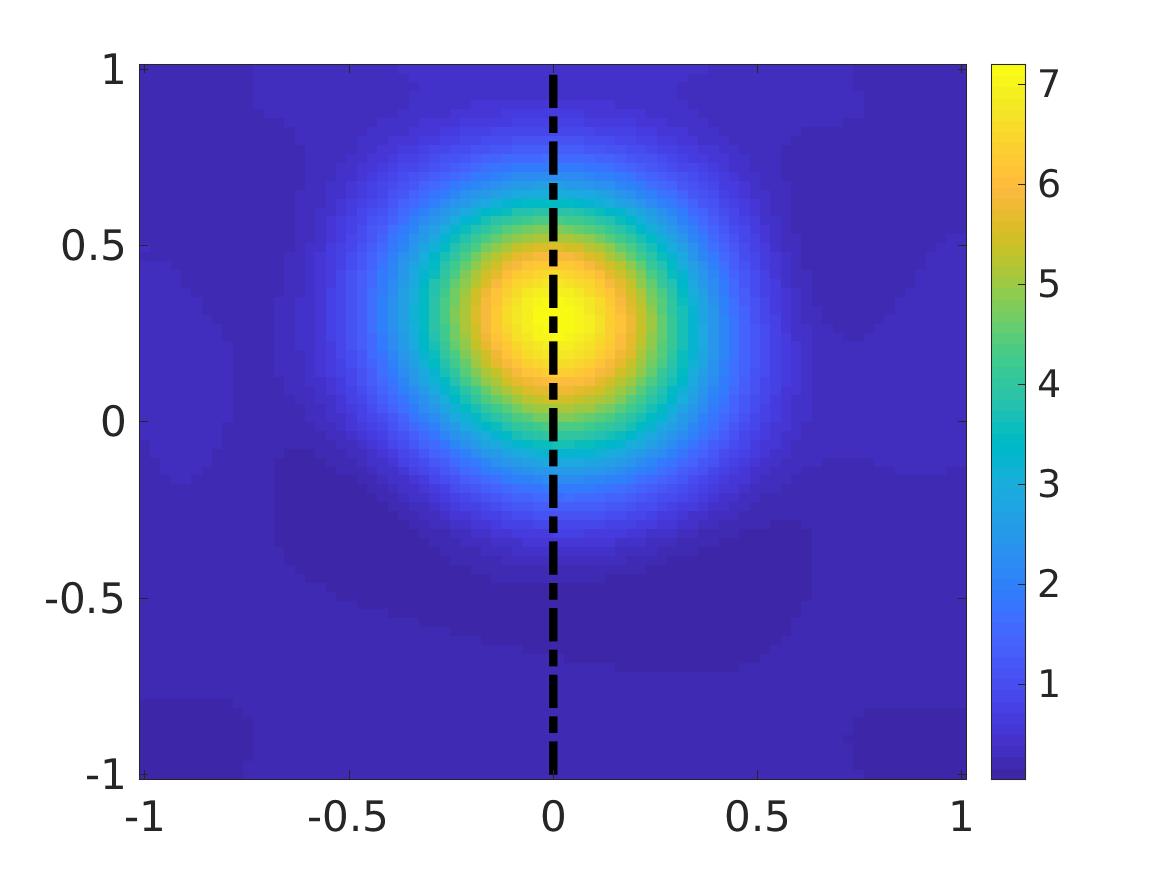}}\quad		
		\subfloat[\label{m1 cross}]{\includegraphics[width = 0.3\textwidth]{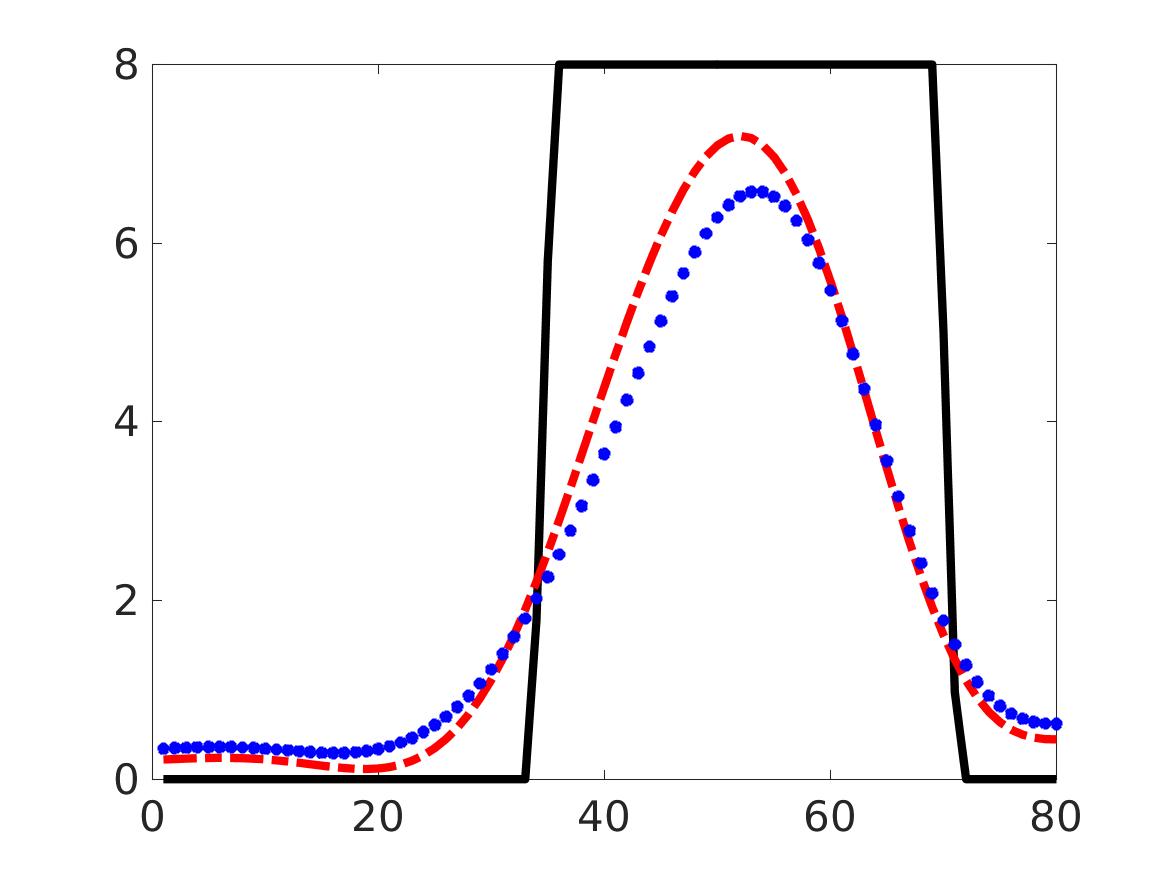}}
		\quad
		\subfloat[\label{fig m1 error}]{\includegraphics[width = 0.3\textwidth]{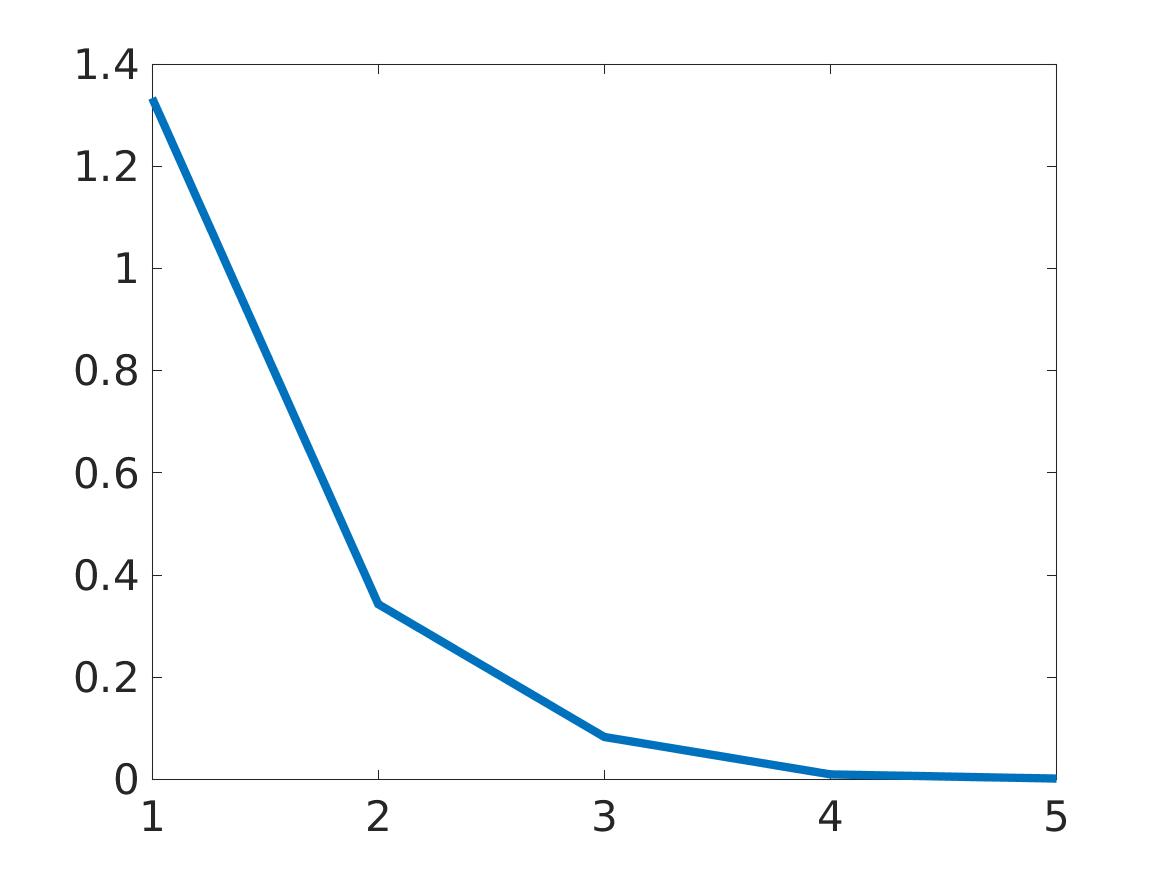}}
	\caption{\label{example 1} Test 1. The reconstruction of the source function. (a) The function $p_{\rm true}$
	(b) The initial solution $p^{(0)}$ obtained by Step \ref{Step 3} in Algorithm \ref{alg}.
	(c) The function $p^{(5)}$ obtained by Step \ref{Step 8} in Algorithm \ref{alg}.
	(d) The true (solid), the initial source function (dot) in (b) and computed source function (dash-dot) on the vertical line  in (c).
	(e) The curve $\|p^{(k)} - p^{(k - 1)}\|_{L^{\infty}(\Omega)},$ $k = 1, \dots, 5.$
	The noise level of the data in this test is $20\%$.
	}
	\end{flushleft}
\end{figure} 

Our method to find the initial solution works very well in this case.
One can see in Figure \ref{m1 init20} that by solving the system (\ref{5.5})--(\ref{5.7}), we obtain the initial solution that clearly indicates the position of the inclusion.
The value of the reconstructed function inside the inclusion is somewhat acceptable and will improve after several iterations, see Figure \ref{m1 cross}.
The reconstructed function $p_{\rm comp} = p^{(5)}$ is a good approximation of the true function $p_{\rm true}$, see Figures \ref{m1 p comp} and \ref{m1 cross}. 
It is evident from Figure \ref{fig m1 error} that our method converges fast.
The reconstructed maximal value inside the inclusion is 7.202 (relative error 9.98\%). 

\noindent{\bf Test 2.} We test the case of multiple inclusions, each of which has a different value.
 The true source function $p_{\rm true}$ is given by
\[
	p_{\rm true}(x, y) = \left\{
		\begin{array}{ll}
			12 & (x - 0.5)^2 + (y - 0.5)^2 < 0.35^2,\\
			10& (x + 0.5)^2 + (y + 0.5)^2 < 0.35^2,\\
			14& (x - 0.5)^2 + (y + 0.5)^2 < 0.35^2,\\
			9& (x + 0.5)^2 + (y - 0.5)^2 < 0.35^2,\\
			0 &\mbox{otherwise.}
		\end{array}
	\right.
\]
In this test, the nonlinearity $q$ is given by
\[
	q(s) = -s(1 - \sqrt{|s|}) \quad s \in \R.
\]
The true and computed source functions $p$ are displayed in Figure \ref{example 2}.
\begin{figure}[h!]
	\begin{flushleft}
		\subfloat[]{\includegraphics[width = 0.3\textwidth]{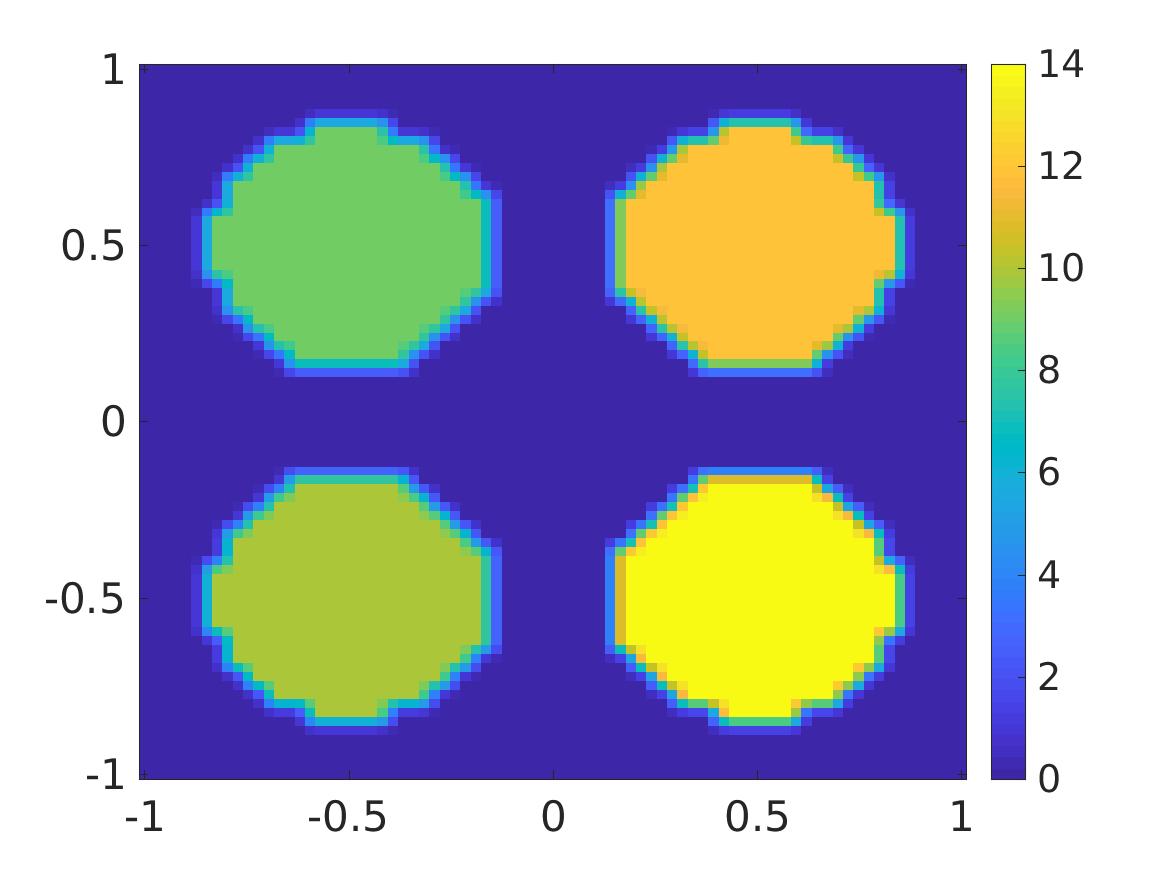}}
		\quad
		\subfloat[\label{m2 init20}]{\includegraphics[width = 0.3\textwidth]{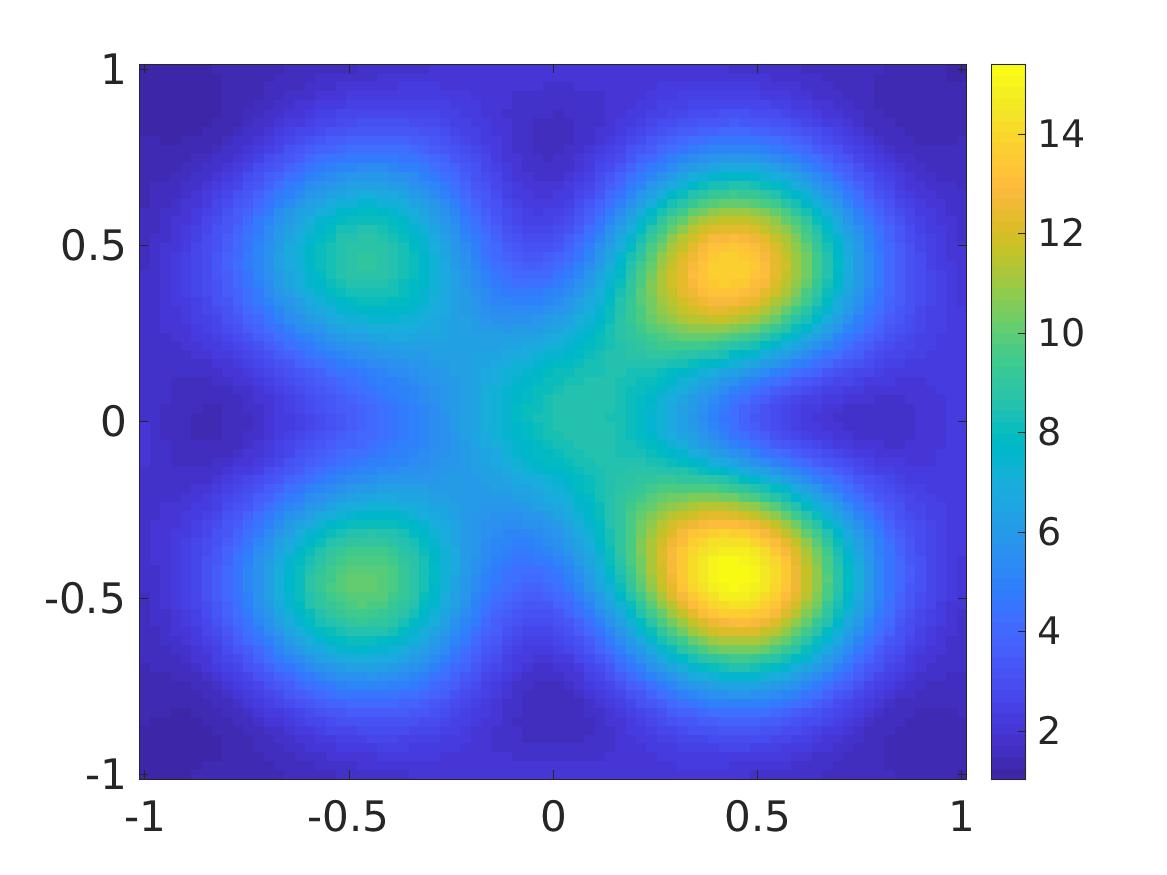}} ~ \quad

		\subfloat[\label{m2 p comp}]{\includegraphics[width = 0.3\textwidth]{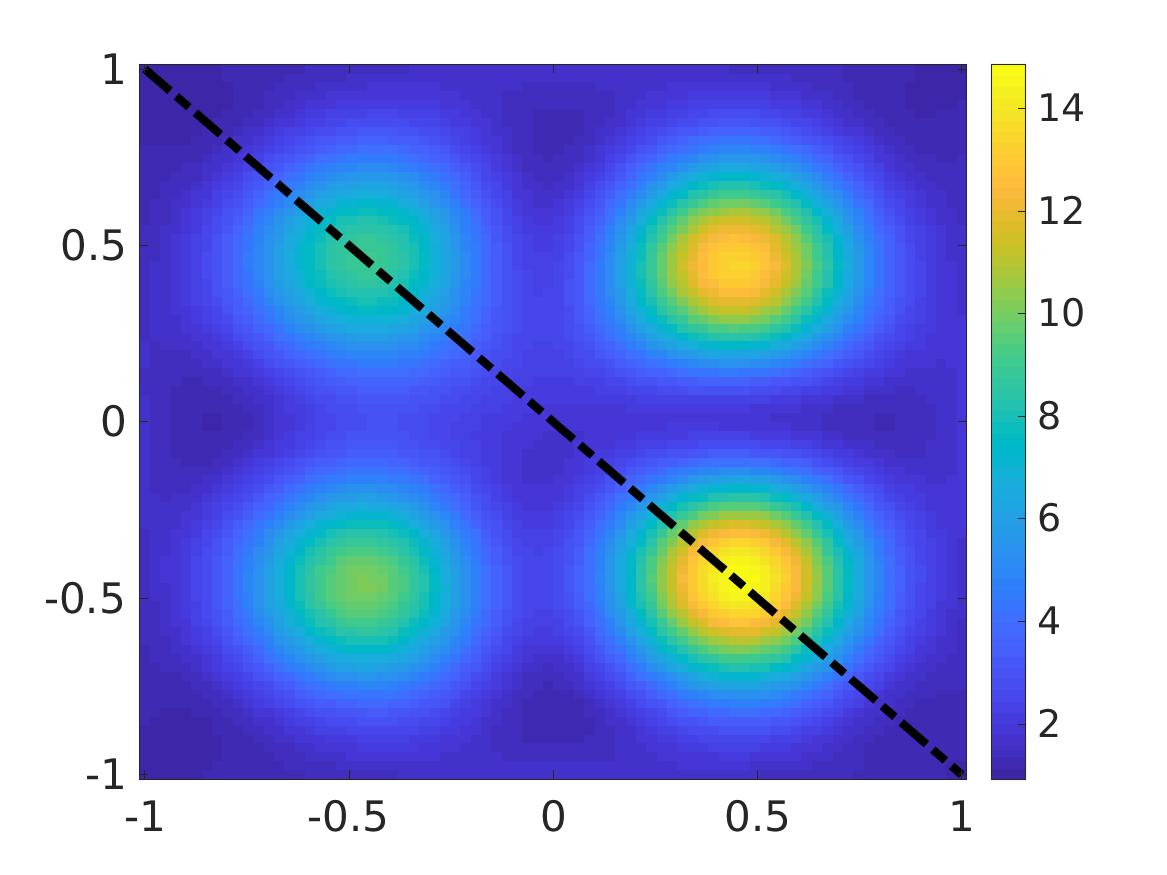}}\quad		
		\subfloat[\label{m2 cross}]{\includegraphics[width = 0.3\textwidth]{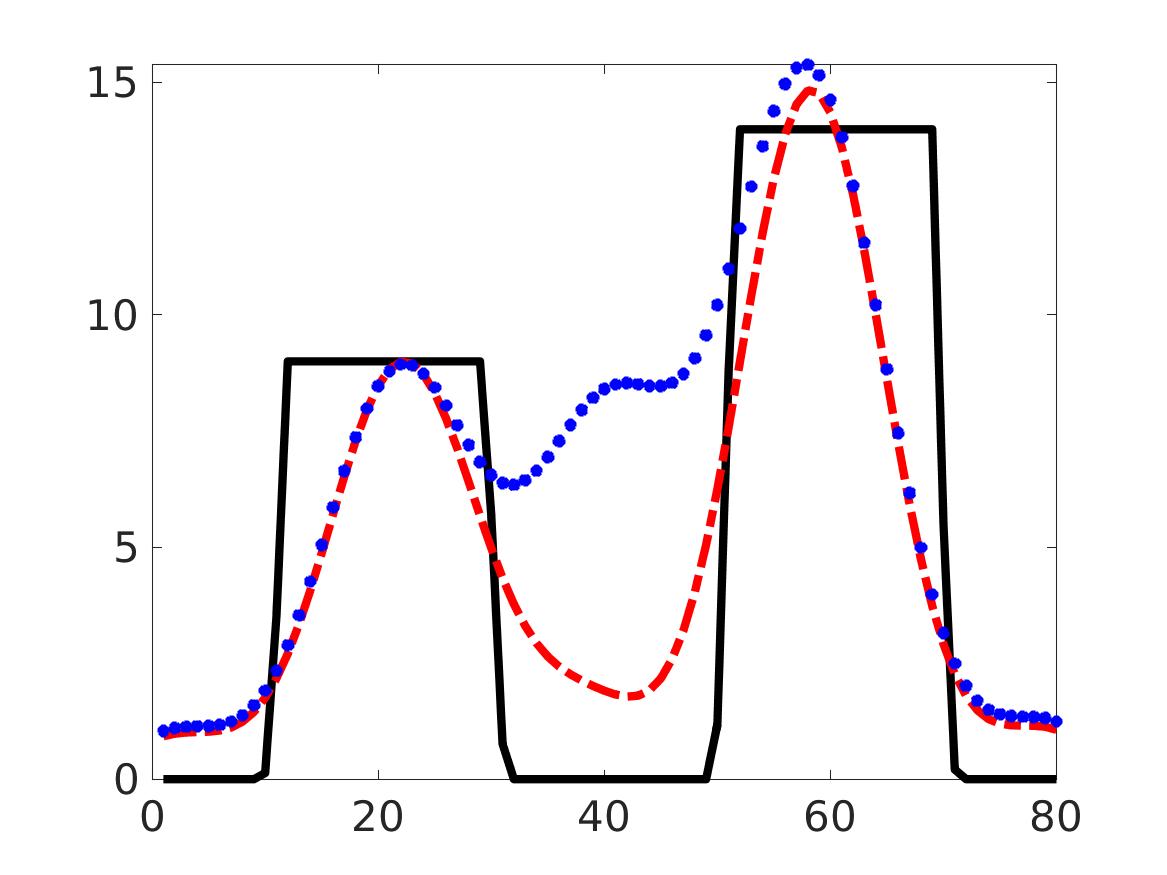}}
		\quad
		\subfloat[\label{fig m2 error}]{\includegraphics[width = 0.3\textwidth]{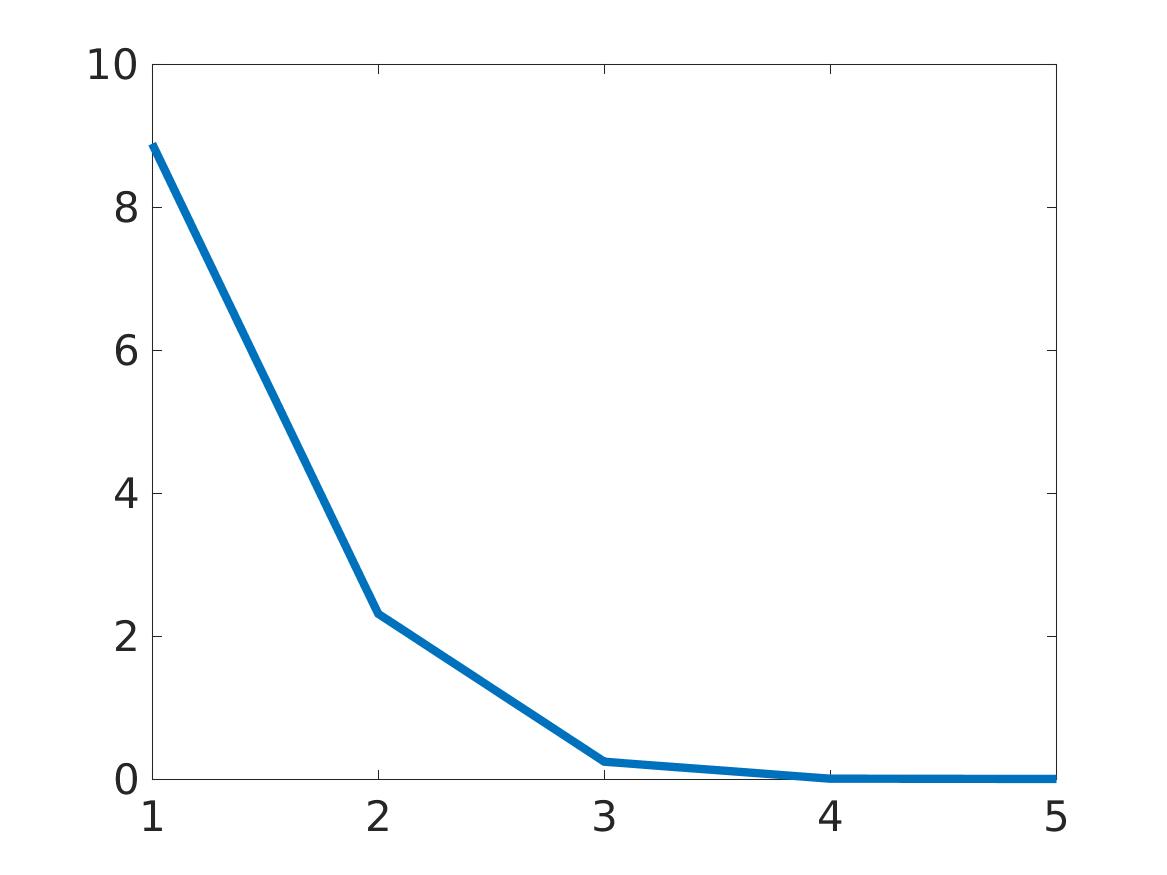}}
	\caption{\label{example 2} Test 2. The reconstruction of the source function. (a) The function $p_{\rm true}$
	(b) The initial solution $p^{(0)}$ obtained by Step \ref{Step 3} in Algorithm \ref{alg}.
	(c) The function $p^{(5)}$ obtained by Step \ref{Step 8} in Algorithm \ref{alg}.
	(d) The true (solid), the initial solution (dot) and computed source function (dash-dot) on the diagonal line in (c).
	(e) The curve $\|p^{(k)} - p^{(k - 1)}\|_{L^{\infty}(\Omega)},$ $k = 1, \dots, 5.$
	The noise level of the data in this test is $20\%$.
	}
	\end{flushleft}
\end{figure} 

In this test, we successfully recover all four inclusions. 
On the other hand, the value of $p$ in each inclusion is high, making the true solution far away from the constant background $p_0 = 0$. 
Hence, $p_0 = 0$ might not serve as the initial guess.  
Our method to find the initial solution in Step \ref{Step 3} in Algorithm \ref{alg} is somewhat effective, see Figure \ref{m2 init20}. 
The computed images of the initial solution do not completely separate the inclusions.
Both computed values and images of the inclusions improve with iterations. 
The computed source function $p_{\rm comp} = p^{(5)}$ is acceptable, see Figure \ref{m2 p comp}. 
Figure \ref{m2 cross} shows that the constructed values in the inclusions are good. The procedure converges very fast, see Figure \ref{fig m2 error}.

The true maximal value of the upper left inclusion is 9 and the computed one is 8.992 (relative error 0.0\%).
The true maximal value of the upper right inclusion is 12 and the computed one is 13.4 (relative error 11.67\%).
The true maximal value of the lower left inclusion is 10 and the computed one is 10.13 (relative error 1.3\%).
The true maximal value of the lower right inclusion is 14 and the computed one is 14.86 (relative error 6.14\%).

\noindent {\bf Test 3.}
%In this test, we detect a ring. 
The true source function is given by
\[
	p_{\rm true} = \left\{
		\begin{array}{ll}
			1 & 0.2^2 < x^2 + y^2 < 0.8^2,\\
			0 & \mbox{otherwise.}
		\end{array}
	\right.
\]
The nonlinearity is given by
\[
	q(s) = s^2 \quad s \in \R.
\]
The support of the function $p_{\rm true}$ is ring-like.
This test is interesting due to the presence of the void and the nonlinearity grows fast.
The true and computed source functions $p$ are displayed in Figure \ref{example 3}.
\begin{figure}[h!]
	\begin{flushleft}
		\subfloat[]{\includegraphics[width = 0.3\textwidth]{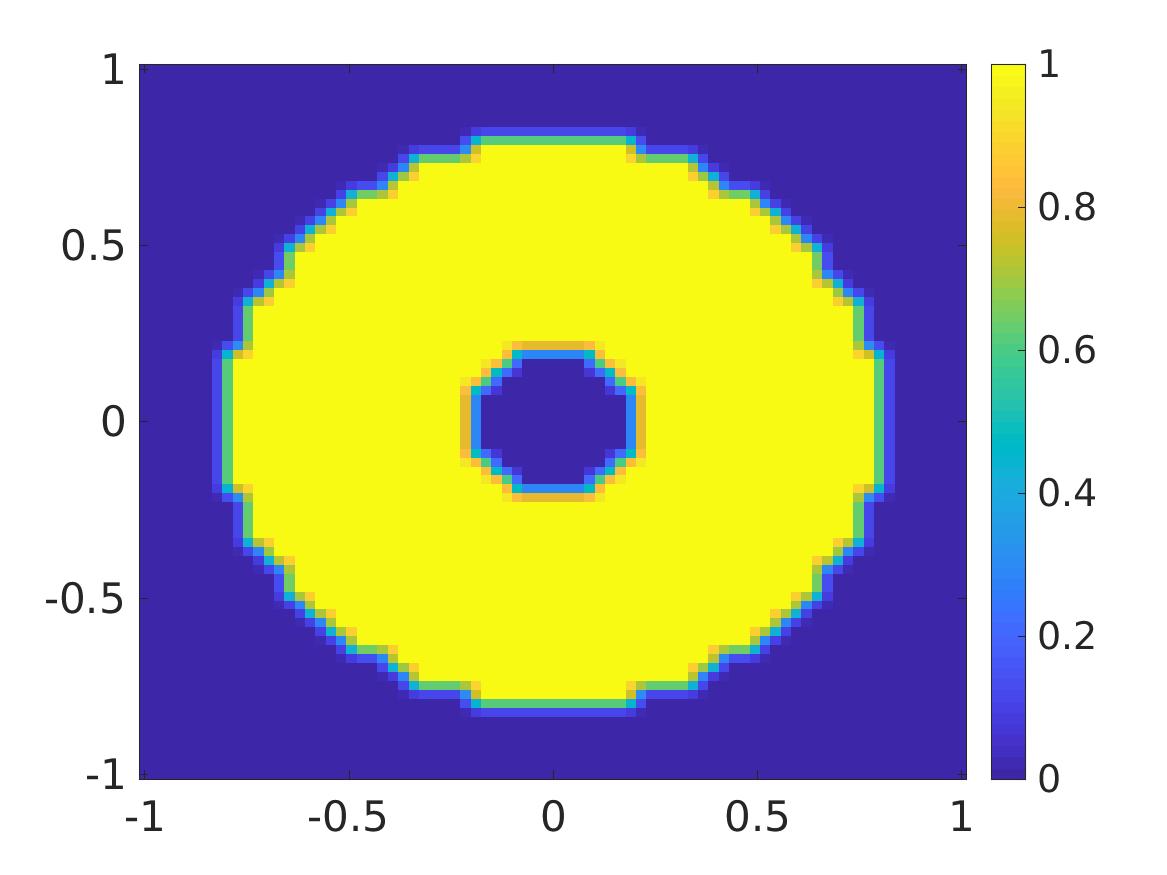}}
		\quad
		\subfloat[\label{m3 init20}]{\includegraphics[width = 0.3\textwidth]{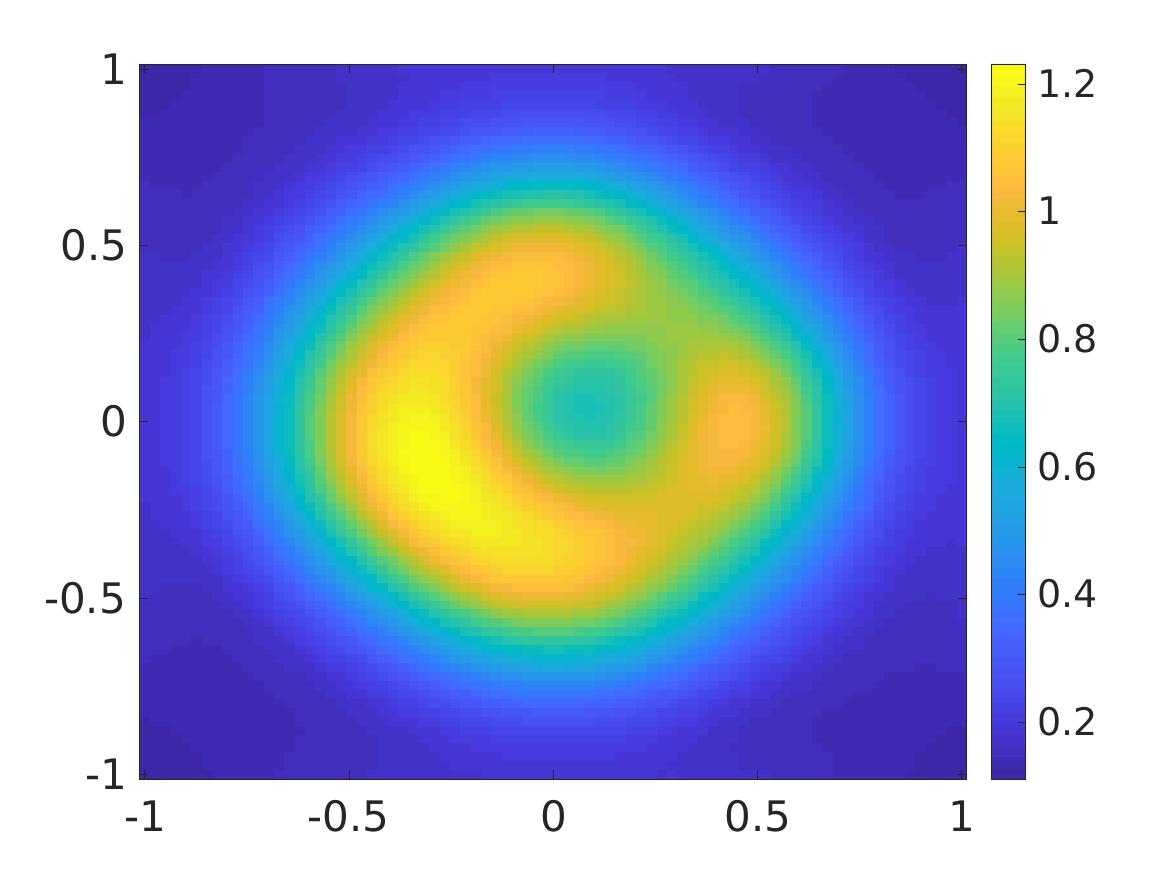}}  \quad

		\subfloat[\label{m3 p comp}]{\includegraphics[width = 0.3\textwidth]{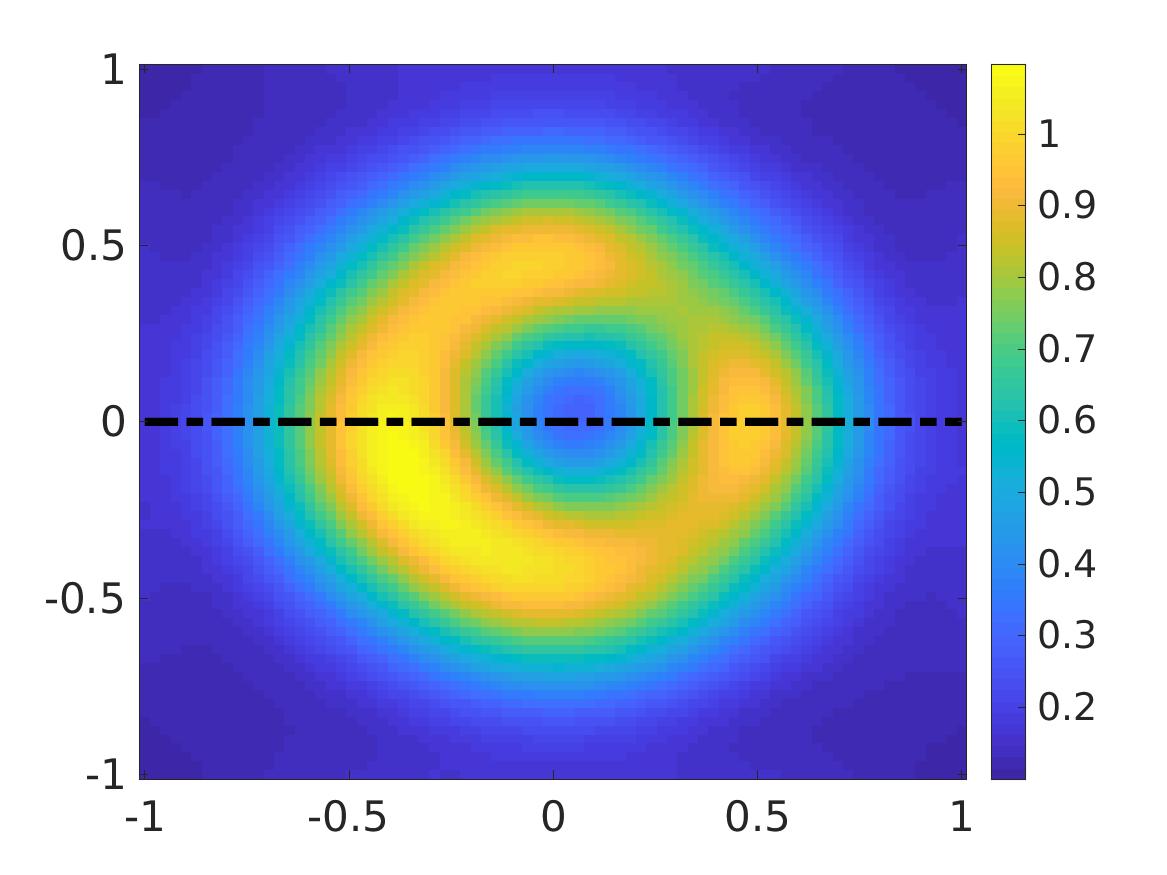}}\quad		
		\subfloat[\label{m3 cross}]{\includegraphics[width = 0.3\textwidth]{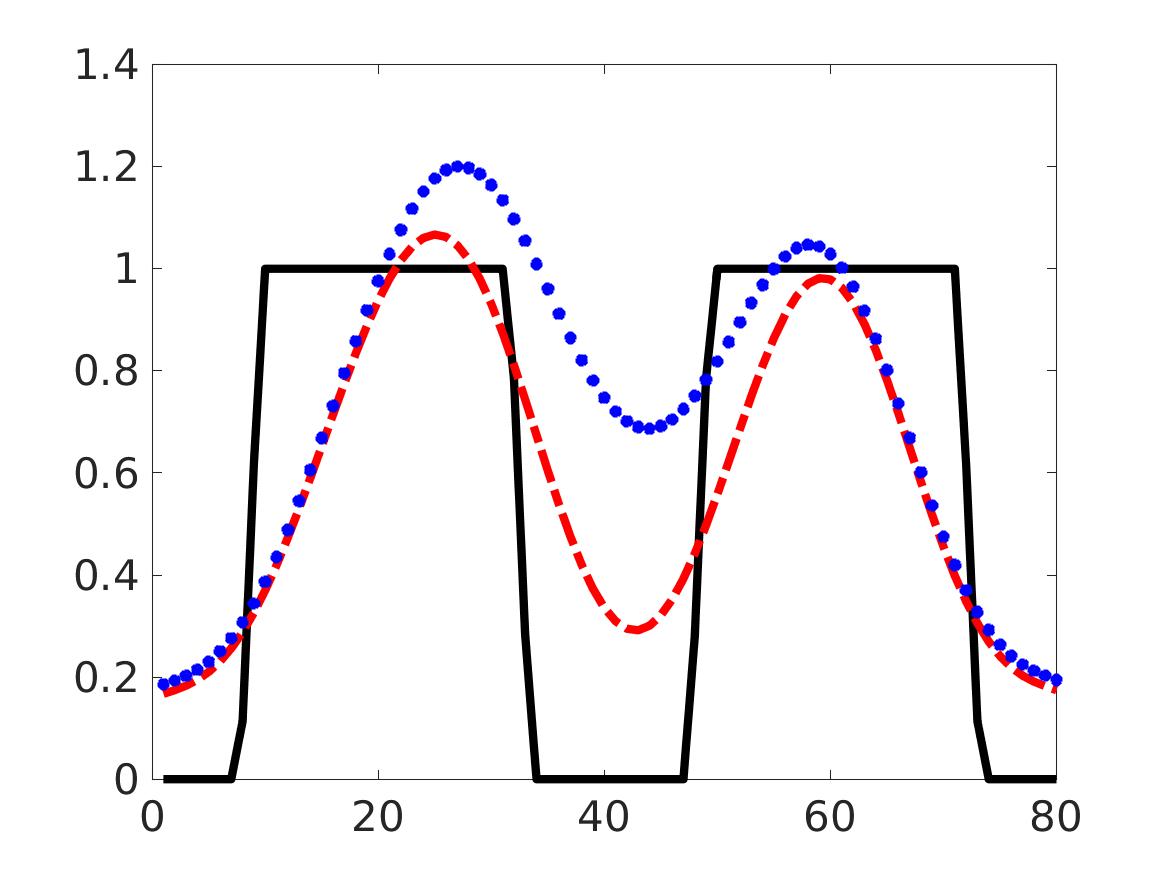}}
		\quad
		\subfloat[\label{fig m3 error}]{\includegraphics[width = 0.3\textwidth]{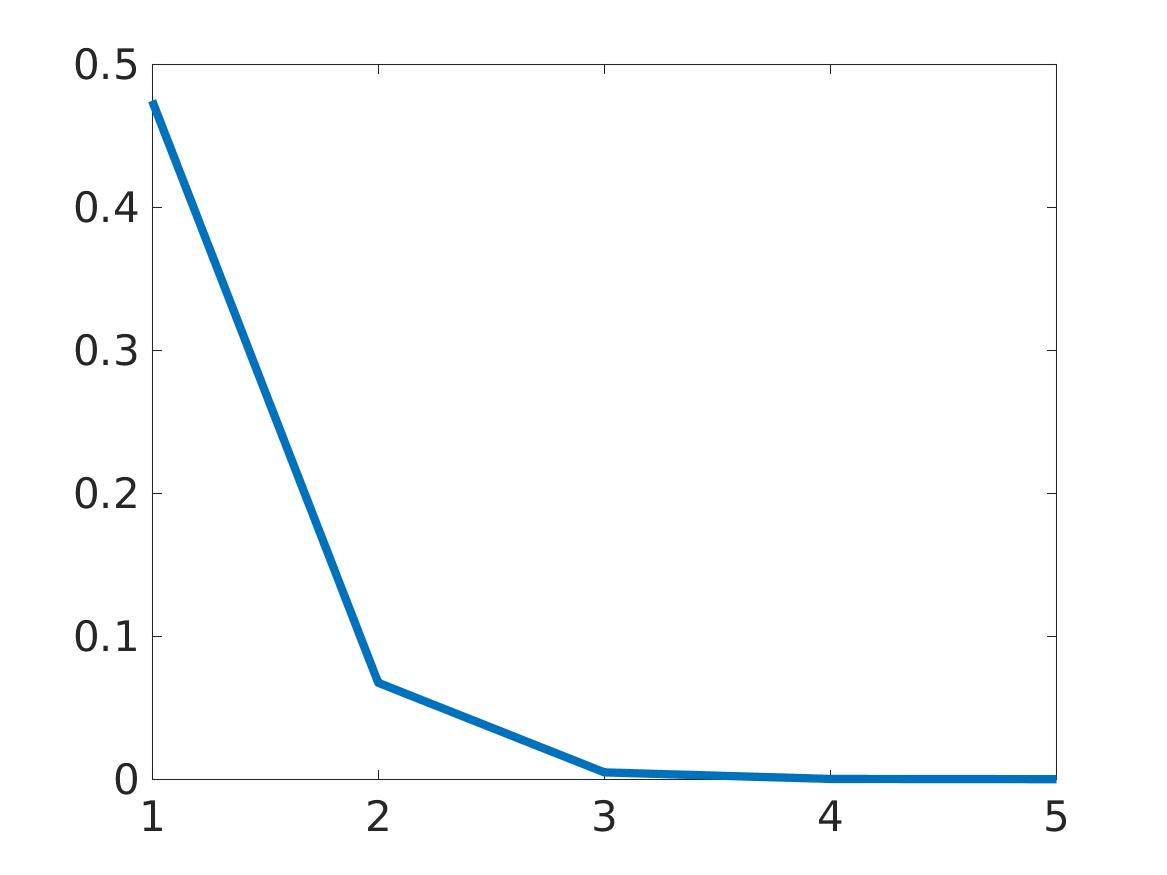}}
	\caption{\label{example 3} Test 3. The reconstruction of the source function. (a) The function $p_{\rm true}$
	(b) The initial solution $p^{(0)}$ obtained by Step \ref{Step 3} in Algorithm \ref{alg}.
	(c) The function $p^{(5)}$ obtained by Step \ref{Step 8} in Algorithm \ref{alg}.
	(d) The true (solid), initial solution (dot) and computed source function (dash-dot) on horizontal line  in (c).
	(e) The curve $\|p^{(k)} - p^{(k - 1)}\|_{L^{\infty}(\Omega)},$ $k = 1, \dots, 5.$ The noise level of the data in this test is $20\%$.
	}
	\end{flushleft}
\end{figure}

In this test, our method to find the initial solution in Step \ref{Step 3} in Algorithm \ref{alg} is somewhat acceptable. 
The void in the initial solution $p^{(0)}$ cannot be seen very well, see Figure \ref{m3 init20}.
The contrast and the void are improved with iteration. 
The final reconstructed source function $p^{(5)}$ is satisfactory, see Figures \ref{m3 p comp} and \ref{m3 cross}. 
The computed maximal value inside the ring is 1.094 (relative error = 9.4\%).

\noindent{\bf Test 4.} In this test, we identify two high contrast ``lines".
The true source function is given by
\[
	p_{\rm true} = \left\{
		\begin{array}{ll}
			10 & \max\{|x|/4, 4|y - 0.6| < 0.9\} \mbox{and } |x| < 0.8,\\
			8 &\max\{|x|/4, 4|y + 0.6| < 0.9\} \mbox{and } |x| < 0.8,\\
			0 &\mbox{otherwise.}
		\end{array}
	\right.
\]
The nonlinearity is given by
\[
	q(s) = - s^2 \quad s \in \R.
\]
The true and computed source functions $p$ are displayed in Figure \ref{example 4}.

\begin{figure}[h!]
	\begin{flushleft}
		\subfloat[]{\includegraphics[width = 0.3\textwidth]{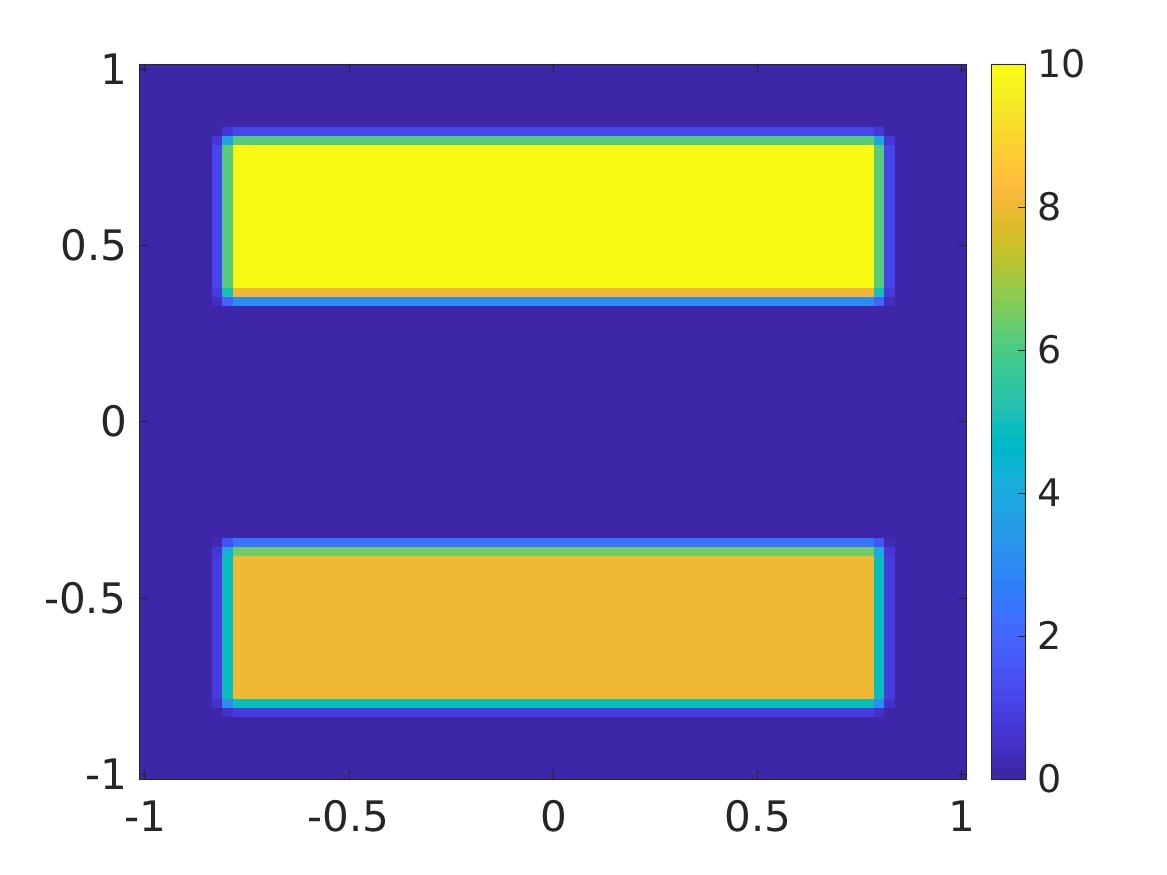}}
		\quad
		\subfloat[\label{m4 init20}]{\includegraphics[width = 0.3\textwidth]{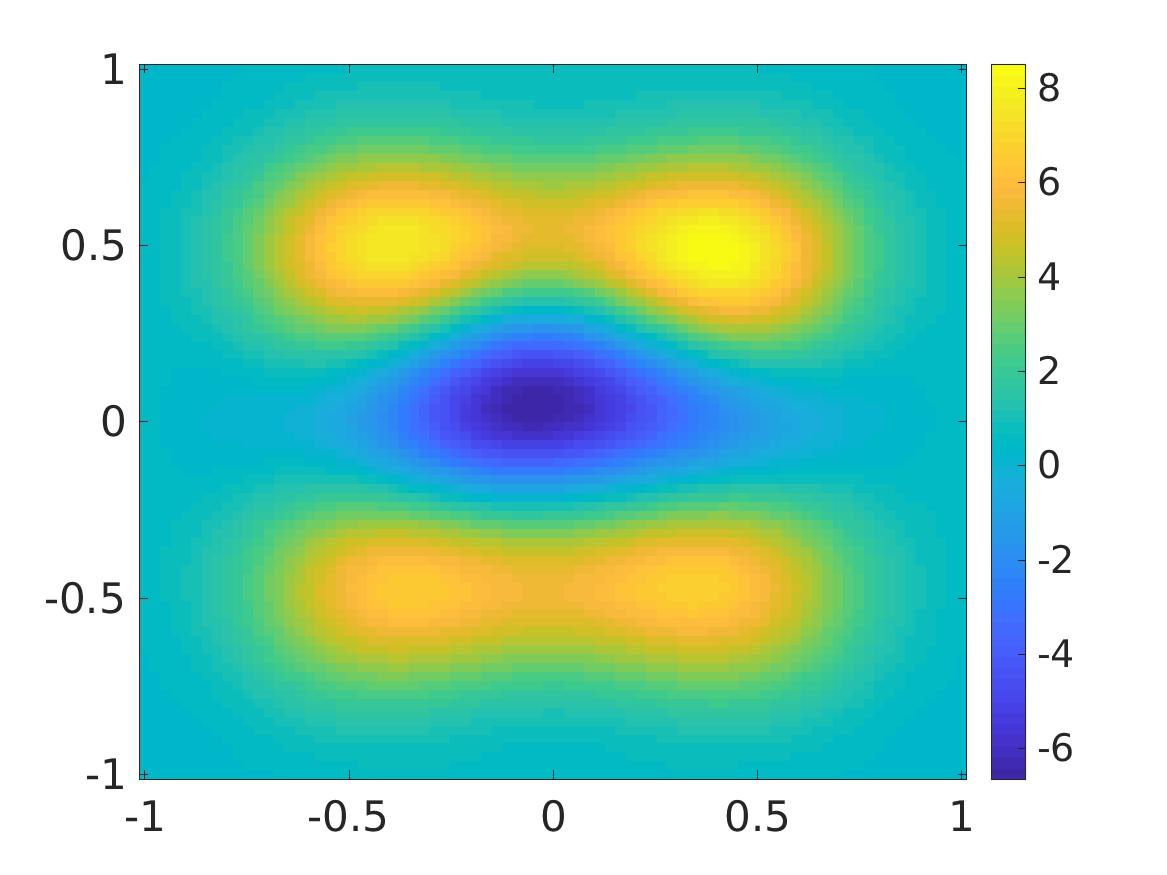}}  \quad

		\subfloat[\label{m4 p comp}]{\includegraphics[width = 0.3\textwidth]{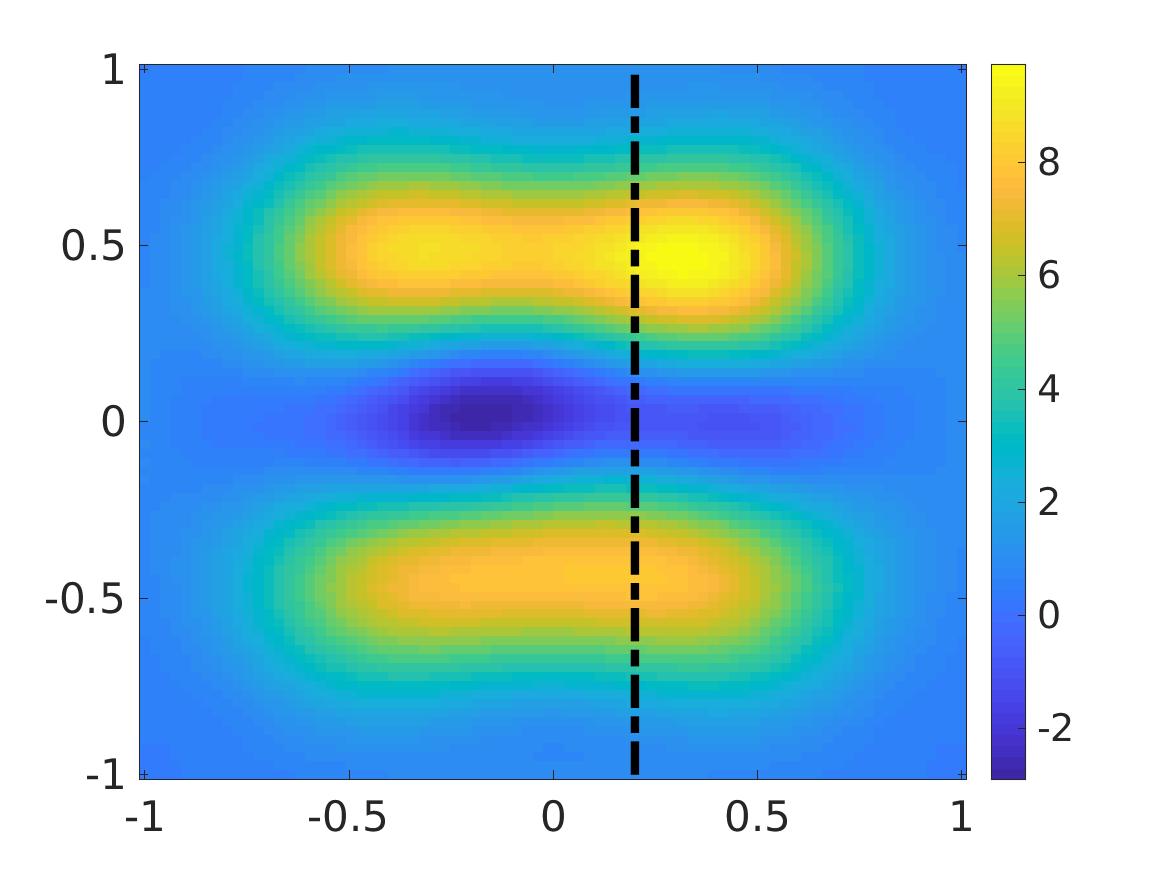}}\quad		
		\subfloat[\label{m4 cross}]{\includegraphics[width = 0.3\textwidth]{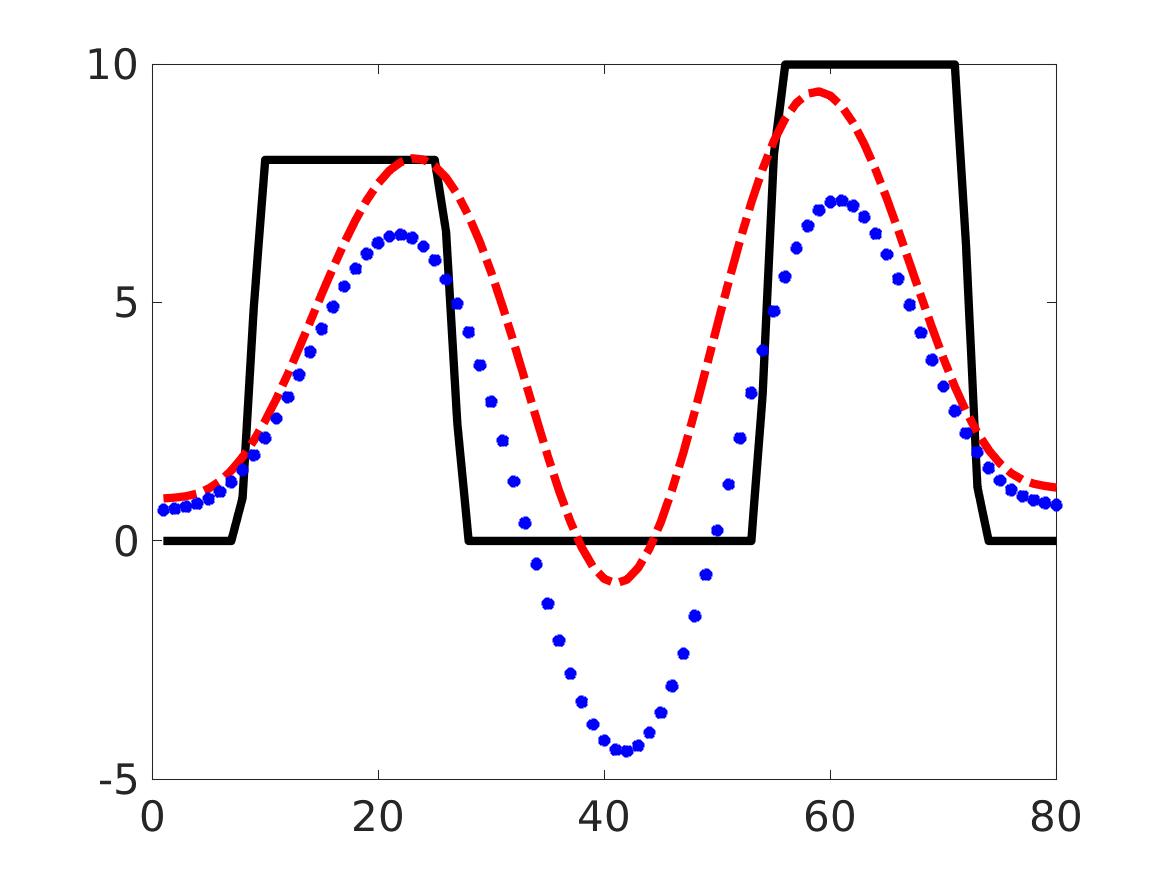}}
		\quad
		\subfloat[\label{fig m4 error}]{\includegraphics[width = 0.3\textwidth]{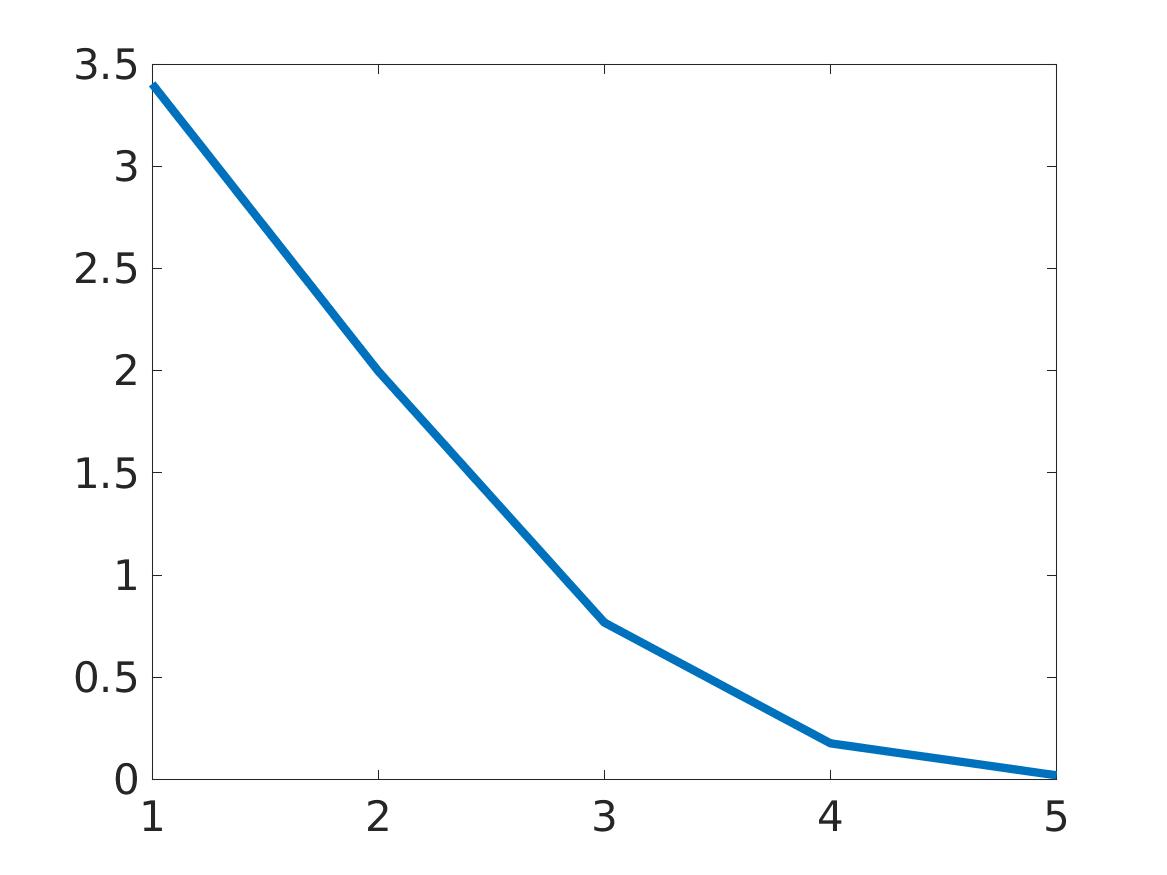}}
	\caption{\label{example 4} Test 3. The reconstruction of the source function. (a) The function $p_{\rm true}$
	(b) The initial solution $p^{(0)}$ obtained by Step \ref{Step 3} in Algorithm \ref{alg}.
	(c) The function $p^{(5)}$ obtained by Step \ref{Step 8} in Algorithm \ref{alg}.
	(d) The true and computed source function on the line (dash-dot) in (c).
	(e) The curve $\|p^{(k)} - p^{(k - 1)}\|_{L^{\infty}(\Omega)},$ $k = 1, \dots, 5.$
	The noise level of the data in this test is $20\%$.
	}
	\end{flushleft}
\end{figure} 

It is evident that Algorithm \ref{alg} provides a good computed source function. 
The initial solution by Step \ref{Step 3} in Algorithm \ref{alg} is quite good although there is a ``negative" artifact between the two detected lines, see Figure \ref{m4 init20}. This artifact is reduced significantly with iteration. 
We observe that the shape and contrasts of two lines are reconstructed very well, see Figures \ref{m4 p comp} and \ref{m4 cross}. 
Our method converges fast, see Figure \ref{fig m4 error}. 

The true maximal value of the source function in the upper line is 10 and the computed one is 9.714 (relative error 2.8\%).
The true maximal value of the source function in the lower line is 8 and the computed one is 8.041 (relative error 0.51\%).

\section{Concluding remarks} \label{sec remarks}

In this paper, we analytically and numerically solve the problem of recovering the initial condition of nonlinear parabolic equations. 
The first step in our method is to derive a system of nonlinear elliptic PDEs whose solutions are the Fourier coefficients of the solution to the governing nonlinear parabolic equation. 
We propose an iterative scheme to solve the system above.
Finding the initial solution for this iterative process is a part of our algorithm.
The convergence of this iterative method was proved. 
We show several numerical results to confirm the theoretical part.

\noindent{\bf Acknowledgment:}
The authors sincerely appreciate Michael V.Klibanov for many fruitful discussions that strongly improve the mathematical results and the presentation of this paper.
The work of the second author was supported by US Army Research Laboratory and US Army Research
Office grant W911NF-19-1-0044.

\providecommand{\bysame}{\leavevmode\hbox to3em{\hrulefill}\thinspace}
\providecommand{\MR}{\relax\ifhmode\unskip\space\fi MR }
% \MRhref is called by the amsart/book/proc definition of \MR.
\providecommand{\MRhref}[2]{%
  \href{http://www.ams.org/mathscinet-getitem?mr=#1}{#2}
}
\providecommand{\href}[2]{#2}

%\bibliography{../../../../../../mybib}{}
%\bibliographystyle{plain}

\end{document}